\pgfplotsset{compat=1.10} 
\newtheorem{theorem}{Theorem}[section]
\newtheorem{lemma}[theorem]{Lemma}
\newtheorem{corollary}[theorem]{Corollary}
\theoremstyle{definition}
\newtheorem{definition}[theorem]{Definition}
\newtheorem{hypothesisspace}[theorem]{Assumptions on the space}
\newtheorem{hypothesisoperator}[theorem]{Assumptions on the operator}
\newtheorem{hypothesisapproximatingspace}[theorem]{Assumptions on the approximating space}
\newtheorem{normsulam}[theorem]{Norms for the Ulam discretization}
\newtheorem{normspiecewise}[theorem]{Norms for the piecewise linear discretization}
\newtheorem{notation}[theorem]{Notation}
\newtheorem{example}[theorem]{Example}
\theoremstyle{remark}
\newtheorem{remark}[theorem]{Remark}
\DeclarePairedDelimiter{\abs}{\lvert}{\rvert}
\DeclarePairedDelimiter{\norm}{\lVert}{\rVert}
\newcommand{\trinorm}{\@ifstar\@trinorms\@trinorm}
\newcommand{\@trinorms}[1]{%
  \left|\mkern-1.5mu\left|\mkern-1.5mu\left|
   #1
  \right|\mkern-1.5mu\right|\mkern-1.5mu\right|
}
\newcommand{\@trinorm}[2][]{%
  \mathopen{#1|\mkern-1.5mu#1|}
  #2
  \mathclose{#1|\mkern-1.5mu#1|_{L^1}}
}
\DeclareMathOperator{\Var}{Var}
\DeclareMathOperator{\Lip}{Lip}
\title[Rigorous Computation of Invariant Densities]{A general framework for the rigorous computation of invariant densities and the coarse-fine strategy}
\author{S. Galatolo}
\address{Universit\`a di Pisa - Dipartimento di Matematica - Largo Pontecorvo, 2 - 56127, Pisa, Italy}
\email{stefano.galatolo@unipi.it}
\author{M. Monge}
\address{Universidade Federal do Rio de Janeiro - Instituto de Matemática - Av. Athos da Silveira Ramos, 149 - Edifício do Centro de Tecnologia, Bloco C (Térreo) - Cidade Universitária}
\email{maurizio.monge@im.ufrj.br}
\author{I. Nisoli}
\address{Universidade Federal do Rio de Janeiro - Instituto de Matemática - Av. Athos da Silveira Ramos, 149 - Edifício do Centro de Tecnologia, Bloco C (Térreo) - Cidade Universitária}
\address{Department of Mathematics, Hokkaido University, N10 W8, Kita-ku, Sapporo 001-0010, Japan}
\address{RIES, Hokkaido University, N20 W10, Kita-ku, Sapporo 001-0020, Japan}
\email{nisoli@im.ufrj.br, isaia.nisoli@es.hokudai.ac.jp}
\author{F. Poloni}
\address{Universit\`a di Pisa - Dipartimento di Informatica - Largo Pontecorvo, 3 - 56127, Pisa, Italy}
\email{federico.poloni@unipi.it}
\subjclass[2000]{37M25, 37-04, 65P99}
\begin{document}

\begin{abstract}
In this paper we present a general, axiomatical framework for the rigorous approximation of invariant densities and other important statistical features of dynamics. We approximate the system trough a finite element reduction, by composing the associated transfer operator with a suitable finite dimensional projection (a discretization scheme) as in the well-known Ulam method.

We introduce a general framework based on a list of properties (of the system and of the projection) that need to be verified so that we can take advantage of a so-called ``coarse-fine'' strategy. This strategy is a novel method in which we exploit information coming from a coarser approximation of the system to get useful information on a finer approximation, speeding up the computation. 
This coarse-fine strategy allows a precise estimation of invariant densities and also allows to estimate rigorously the speed of mixing of the system by the speed of mixing of a coarse approximation of it, which can easily be estimated by the computer.

The estimates obtained here are \emph{rigourous}, i.e., they come with exact error bounds that are guaranteed to hold and take into account both the discretiazation and the approximations induced by finite-precision arithmetic.

We apply this framework to several discretization schemes and examples of invariant density computation from previous works, obtaining a remarkable reduction in computation time.

We have implemented the numerical methods described here in the Julia programming language, and released our implementation publicly as a Julia package.
\end{abstract}

\maketitle

\section{Introduction}

Several important features of the statistical behavior of a dynamical system
are related to the properties of its invariant measures and in particular to
the properties of the so called \emph{\ Physical Invariant Measure}\footnote{%
This is a class of invariant measures representing the statistical behavior
of large sets of initial conditions and having particular interest in the applications, see \cite{Y} for a survey on
the subject.}. The knowledge of the invariant measure of interest, gives information on
the statistical behavior for the long time evolution of the system. This fact
strongly motivates the search for algorithms which are able to compute
quantitative information about invariant measures of physical interest, and
in particular, algorithms giving an explicit bound on the error which is
made in the approximation.
The application of such rigorously certified estimates allows to get reliable information on the statistical behaviour of the system and perform computer-aided proofs, establishing rigorously proved statements on the statistical behavior of the system (see e.g. \cite{GMN1}).

\paragraph{\textbf{Several levels of precision in the estimation of the approximation
error. }}The problem of approximating some interesting invariant measure of a
deterministic or random dynamical system is widely studied in the
literature. Some algorithms are proved to converge to the real invariant
measure (up to errors in some given metrics) in some classes of systems.
Sometimes asymptotical estimates on the rate of convergence are provided (see
e.g. \cite{Din93, Din94}, \cite{DelJu02,Del99} ,\cite{BM}, \cite{Mr}, \cite%
{F08},\cite{FC}); other results and algorithms give an explicit bound on the
error (see e.g. \cite{KMY,BB,L,PJ99,H,W,GMN1,GMN2}). This is the point of
view of the present paper. 

We are not only interested to the algorithm but
also to a suitable implementation. In fact, implementing such an algorithm
in a software which is able to keep track of the various truncations and
numerical errors in the computation allows the result of a single computation
to be interpreted as a computer-aided proved statement on the behavior of
the observed system, and hence it has a mathematical meaning. In the literature
the dimension of some nontrivial attractors or repellers was estimated in
this way (see e.g. \cite{GaNi}, \cite{GMN2}, \cite{PJ}), as well as escape rates (\cite%
{GNS}), linear response (\cite{BGNN1}\cite{PV2}), diffusion coefficients (%
\cite{BGNN2}\cite{PV}) or the behavior of Lyapuov exponents in models of
real phenomena (\cite{MSDGG},\cite{GMN1}).

It is worth noting that some negative result are known about the general problem
 of computing invariant measures up to a small given error. In
\cite{GalHoyRoj3} it is shown that\emph{\ there are examples of computable%
\footnote{%
Computable, here means that the dynamics can be approximated at any accuracy
by an algorithm, see e.g. \cite{GalHoyRoj3} for precise definition.} systems
without any computable invariant measure}. This phenomenon shows that there is some subtlety in the
general problem of computing invariant measures up to a given error.

\paragraph{\textbf{Finite element reductions based on a projection and the present
paper.}} \ The techniques used in the literature to establish rigorous bounds on the
approximation error are often related to a suitable finite-element reduction
of the transfer operator of the system. In this approach the transfer
operator is approximated by a finite-rank one. The invariant measures of
the system under study can be seen as fixed points of its transfer
operator when acting on suitable functional spaces.
 These fixed points can then be approximated by the fixed points of the finite-dimensional reduction of the
operator. Suitable quantitative fixed-point stability results can give a
bound of this approximation error.

For this purpose, several approaches have been implemented. The Ulam method
is a classical example of such a finite elements reduction, and provides a
finite dimensional approximation of the transfer operator with a finite Markov
chain obtained by discretizing the space by a cell subdivision; see Section \ref{sec:Ulam} for a precise definition.
In this approach, and in other finite-element reductions, the transfer operator is approximated by a finite-dimensional 
operator defined by the composition of the original operator with suitable projections to a
finite-dimensional functional space.
 In the classical Ulam method, a probability density is approximated by a piecewise constant one and the
projection is then a conditional expectation made on the cell subdivision of
the whole space. Other approaches use different approximation schemes, as
for instance a piecewise linear approximation (see Section \ref{sec:PL}), piecewise
smooth approximations, or even other approximation schemes based on Fourier
analysis or Taylor series (\cite{W},\cite{BS}), which are suitable for smooth
systems. All of these approaches require their own estimates and have
advantages for certain classes of systems: for instance, approximation schemes based on the projection to spaces of smooth functions converge faster when used to approximate smooth systems with smooth invariant measures.
 These approaches can be seen as examples of a general construction in which one defines a finite-dimensional
reduction of some operator by composing it with a suitable finite dimensional projection.

In this paper we consider this ``projection based'' finite-element reduction point view in general, and
show that if the finite element reduction method satisfies a certain list of
hypotheses, then we can apply a general construction in which the computation of the invariant density up to a small explicit approximation error will work efficiently.

To estimate this approximation error, we will consider the finite element reduction of the system as a small perturbation of the system itself and estimate quantitatively  the stability of the invariant measure of a system up this small perturbation. These kinds of estimates are also called quantitative statistical stability estimates.
It is known that the quantitative statistical stability  of a system is related to  the speed of convergence to equilibrium of the system itself: the faster is this speed of convergence, the more the system is statistically stable (see e.g. \cite{GJEP} for a general statement adapted to many convergence rates)\footnote{\label{note1} In our paper we will consider the transfer operator associated to the system acting on different weaker or stronger spaces with norms $||\ ||_w , ||\ ||_s$. Here by speed of convergence to equilibrium we mean the rate of convergence to the invariant measure $\mu$ of iterates $L^n\nu$ of regular initial probability measures $\nu$ by the transfer operator $L$. The speed of convergence to equilibrium will be measured as the speed of convergence to $0$ of the ratio $\frac{||L^n\nu-\mu||_w }{||\nu||_s}$. This notion is also related to the speed of mixing of the system.}. 
This is  a delicate point in many papers related to rigorous computations of invariant measures, where the estimate for approximation error involves an estimate for the convergence to equilibrium of the system. Establishing an effective (not only asymptotical) estimate for the convergence to equilibrium of the system is not trivial. This problem is sometimes approached by a-priori estimates which are possible only on restricted families of systems. For example, in circle expanding maps such explicit estimates on the convergence to equilibrium can be done by using Hilbert cones related techniques.
In \cite{GaNi} an idea to overcome this difficulty was proposed, and in this paper a construction is shown, in which the \emph{a priori} estimate on the speed of convergence is replaced by some \emph{a posteriori} one which is computed on the finite element reduction of the system. This is a finite dimensional system (and the transfer operator can be represented by a large and sparse matrix) and its speed of convergence to equilibrium can be estimated directly by the computer.
This idea allowed \cite{GaNi} to compute with explicit error bounds invariant densities of quite different systems as expanding maps, piecewise-expanding ones without a Markov partition and even non-uniformly expanding ones (examples of Manneville--Pomeau maps), essentially applying the same construction for each one of these systems.  An estimate of the convergence rate of a finite-dimensional system, as we need in the ``a posteriori'' approach, is always possible, but it can be a challenging task when the related matrix is large.

In \cite{GNS}, a method to speed up this computation was proposed and applied to some class of examples. This method exploits the regularization properties of the transfer operator to infer the speed of convergence to equilibrium of a finite-dimensional reduction of the system from a coarser finite-element reduction (hence reducing the dimension of the matrix to be considered when estimating the speed of convergence to equilibrium). We will refer to this kind of approach as a  ``coarse-fine'' approach.
In \cite{GMN1}, a similar approach was applied to estimate the convergence to equilibrium of high-resolution finite-element reductions (the rank of the reduced operator is of the order of millions) of transfer operators related to a class of random systems which are models of the behavior of the famous Belosouv-Zhabotisky chaotic chemical reaction, proving the existence of a noise-induced phenomenon observed by numerical simulation in 1983 in the article \cite{MT}.

In the present paper we propose a general systematic formalization of this method, adapting it to different kinds of projection based finite dimensional reductions.
We also implemented these ideas in the Julia language~\cite{Julia}, in a package called \texttt{RigorousInvariantMeasures.jl}, which is part of the JuliaDynamics organization. 
The package can be installed through the Julia package manager and the source code for the development version
can be found at 
\begin{center}
    \url{https://github.com/JuliaDynamics/RigorousInvariantMeasures.jl}.
\end{center}
Examples of the use of this package can be found in the \texttt{examples} directory.
Jupyter notebooks detailing its usage were developed for a summer school at Hokkaido University 
and can be found at \url{https://github.com/orkolorko/HokkaidoSchool}; Lectures 1 and 2 are introductory
while Lecture 3 and 4 deal with the rigorous approximation of the invariant density for a 
deterministic dynamical system and a random dynamical system respectively.

We will apply this new package to a series of examples already studied in \cite{GaNi}, 
testing sistematically the performance of the computations and showing a major speed-up and increase of precision with the new package.
 
\noindent {\bf Structure of the paper and main results. }
In Section \ref{sec:2} we describe the properties we require for our general projection based approximation schemes and the kind of operators to which we mean to apply it. We also show the first useful consequences of these properties, as the fact that the if the original transfer operator satisfy a Lasota Yorke inequality, also the finite element reduction of the transfer operator satisfies it.
In Section \ref{sec:3} we show  explicit bounds on the approximation errors made on approximating the fixed points of the original operator with the fixed points of the finite dimensional reduction.
In Section \ref{sec:coarse-fine} we show how to improve this bound and related estimates on the convergence to equilibrium by a coarse-fine strategy, in which we discretize the transfer operator at different resolutions, exploiting the regularization properties of the operator and using information from the coarser discretization to understand the behavior of the finer one, greatly improving the efficiency of the computation.


In Sections~\ref{sec:Ulam}  and~\ref{sec:PL} we show two examples of approximation schemes, with associated functional analytic setting satisfying the abstract approximation setting defined at \ref{sec:2} : the Ulam scheme and a smoother approximation scheme based on the approximation by piecewise linear functions.

In Section~\ref{sec:8} we discuss some algorithmic aspects of the implementation of our ideas, in particular about the construction of the discretized operators and the estimation of norms of powers of discretized operators.
Section \ref{sec:9} presents examples and in Section \ref{sec:10} we present some final discussion and remarks.

\begin{notation}\label{not:general}
In the following, we use $I$ for the identity matrix/operator/function
(it is typically clear from the context which one it is), and $e_j$ for the
$j$-th vector of the canonical basis (i.e., the $j$th column of $I$).

The symbol $v^*$ denotes the conjugate transpose of a vector.

The symbol $\norm{f}_{L^p}$ denotes the $L^p$ norm of a function (usually defined on $[0,1]$), whereas the symbol $\norm{v}_{\ell^p}$ denotes the $\ell^p$ norm of a vector $v\in\mathbb{R}^n$.
\end{notation}

\section{The abstract setting}\label{sec:2}
In the following we will consider suitable operators between normed vector spaces
of functions over a certain compact manifold with boundary $X$; the main example 
is the transfer operator of a nonsingular dynamical systems on $X$, see Section 
\ref{sec:transferoperator}. We will suppose $X$ to be endowed with the normalized Lebesgue measure $m$ as a reference measure. And denote by $||. ||_{L^1}$ the norm of the associated space  $L^1(X,m)$.

\begin{hypothesisspace}\label{assumption:space}
Let $(\mathcal{U}_w, \norm{.})$ be a real or complex Banach space  of real or complex functions over $X$ containing the indicatrix  $1$ of the whole space. Let $\mathcal{U}_s\subseteq \mathcal{U}_w$  be a subspace of more regular functions on which a certain seminorm  $\norm{.}_s$ is defined. Let us suppose that $(\mathcal{U}_s, \norm{.}_s+\norm{.})$ is a Banach space which is compactly embedded in $(\mathcal{U}_w, \norm{.})$.

We will suppose that these norms  satisfy the following assumptions, there exists positive constants $S_1,S_2\in \mathbb{R}$ and an element $i\in\mathcal{U}_s^*$ such that:
\begin{enumerate}
 \item $\norm{1}=1$,
 \item $\norm{1}_s=0$,\label{ass:strong_norm_1}
 \item $\abs{i(f)} \leq \norm{f}$,
 \item $\norm{.}_{L^1} \leq \norm{.}$,
 \item $\norm{.}\leq S_1 \norm{.}_s+S_2\norm{.}_{L^1}$,
 \item i(1)=1.
\end{enumerate} 
\end{hypothesisspace}

\begin{example}\label{ex:ulam}
Let $\mathcal{U}_s=BV([0,1])$, the space of functions of bounded variation on $[0,1]$, equipped with the seminorm
$\norm{.}_s=\Var(.)$ and the norm $\norm{.}=\trinorm{.}$, with $S_1=1, S_2=1$ and
\[
i(f)=\int_0^1 f\, dm,
\]
where $m$ is the Lebesgue measure on $[0,1]$.
\end{example}
\begin{example}\label{ex:lipschitz}
Let $\mathcal{U}_s=\Lip([0,1])$, the space of Lipschitz continuous functions on $[0,1]$, equipped with the seminorm
$\norm{.}_s=\Lip(.)$ and the norms $\norm{.}=\norm{.}_{\infty}$, with $S_1=1, S_2=1$ and
\[
i(f)=\int_0^1 f \, dm,
\]
where $m$ is the Lebesgue measure on $[0,1]$.
\end{example}

\begin{hypothesisoperator}\label{ass:5}
Let $L$ be an operator acting on $\mathcal{U}_s$ such that
\begin{itemize}
\item $i(Lf)=i(f)$,
\item $\norm{L} < \infty$,
\end{itemize}
and suppose that there are $A,B,W\in \mathbb{R}$, with $A<1$ such that for each $n\geq 0 $
\begin{eqnarray} 
\label{eq1}||Lf||_1\leq ||f||_1 \\
\label{genericLY}	\norm{Lf}_s \leq A \norm{f}_s + B\trinorm{f} \\
\label{eq3}	||L^n f||\leq W||f|| 
\end{eqnarray}
respectively for all functions  in $L^1$ and in $ \mathcal{U}_s$.
We say such an operator satisfies a \textbf{one step Lasota-Yorke inequality}.
\end{hypothesisoperator}

\begin{remark} The Lasota Yorke inequality  implies  that $L$ has a `regularizing' behavior, up to a certain point.
The general form of the Lasota Yorke inequality is the following: there are $\lambda <1$, $A',B\geq 0$ s.t. for each  $f \in \mathcal{U}_s$  and $n\geq 0$
\begin{equation} \label{generalLY}
	\norm{L^nf}_s \leq A' \lambda^n \norm{f}_s + B\trinorm{f}.
\end{equation}
An estimate of this kind can be estabished in many systems having some form of uniform expansiveness, even in the presence of discontinuities or piecewise hyperbolic behavior.
We remark that in this case a suitable iterate of $L$ satisfies \eqref{genericLY}. This is usually sufficient for the computation of invariant densities of a system, as the invariant density of the original system is also invariant for the iterate.

We remark that since $\norm{f}\geq \norm{f}_{L^1}$, \eqref{genericLY} also implies  
\begin{equation} \label{LYsw}
	\norm{Lf}_s \leq A \norm{f}_s + B\norm{f}
\end{equation}
for all functions $f \in \mathcal{U}_s$.
\end{remark}

A consequence of \eqref{genericLY} is a simple regularity estimate on a the fixed points  of $L$
which will play an important role in our estimation procedure.
\begin{corollary} \label{cor:LYstrongbound}
If $L$ satisfies a one step Lasota-Yorke inequality and $u$ is a fixed point of $L$:
\begin{equation} \label{LYstrongbound}
	\norm{u}_s \leq \frac{B}{1-A}\trinorm{u}.
\end{equation}
\end{corollary}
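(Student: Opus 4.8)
The plan is to apply the one step Lasota--Yorke inequality \eqref{genericLY} directly to the fixed point itself, treating $u$ as the input function. Since $u$ is a fixed point of $L$ we have $Lu = u$, and moreover, because $L$ maps $\mathcal{U}_s$ into itself by Assumptions~\ref{ass:5}, the fixed point $u = Lu$ automatically lies in $\mathcal{U}_s$; hence $\norm{u}_s$ is finite and the algebraic manipulations below are legitimate. This finiteness is the one structural fact worth isolating before starting the computation.

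Substituting $f = u$ into \eqref{genericLY} and using $\norm{Lu}_s = \norm{u}_s$ gives
\[
\norm{u}_s \leq A\norm{u}_s + B\trinorm{u}.
\]
From here I would subtract $A\norm{u}_s$ from both sides --- an operation permitted precisely because this quantity is finite --- to obtain $(1-A)\norm{u}_s \leq B\trinorm{u}$. The hypothesis guarantees $A < 1$, so the factor $1 - A$ is strictly positive, and dividing by it yields the claimed bound $\norm{u}_s \leq \frac{B}{1-A}\trinorm{u}$.

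The computation is entirely routine; there is no real analytic difficulty, since the statement is a one-line consequence of the contraction factor $A$ being strictly less than $1$. The only point deserving a moment's attention is the finiteness of $\norm{u}_s$, which is exactly what makes the rearrangement valid. This is not a genuine obstacle, as it follows immediately from $u$ lying in the range of $L$, but it is the hypothesis one must be careful to invoke before subtracting and dividing.
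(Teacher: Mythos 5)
Your argument is correct and is exactly the one the paper intends: the corollary is stated without proof precisely because applying \eqref{genericLY} with $f=u$, using $Lu=u$, and rearranging with $A<1$ is immediate. Your extra care about the finiteness of $\norm{u}_s$ is harmless but already guaranteed by $u$ being a fixed point in $\mathcal{U}_s$.
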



We are interested to compute the invariant density by a suitable finite element reduction of our system.
This finite element reduction will be realized by a  suitable projection on a finite dimensional space.
We now formalize the requirements we ask for this projection.
\begin{definition}\label{def:comp_discretization}
Let  $n\in{\mathbb N}$, and  $P_h$ be a rank $n$ linear operator defined on $\mathcal{U}_s$, with $h:=1/n$.

We say that $P_h$ is a \textbf{compatible discretization} if there exists $K$ and $E$ such that:
\begin{enumerate}
 	\item $P_h = P_h^2$, i.e., $P_h$ is a projection.
 	\item  $\norm{P_h f} \leq \norm{f}$ for any function $f\in \mathcal{U}_s$. \label{Phprop:norm1}
 	\item $\norm{P_h f}_s \leq \norm{f}_s$ for any function $f\in \mathcal{U}_s$. \label{Phprop:strong_norm_contraction}
 	\item $\norm{P_h f - f} \leq K h \norm{f}_s$. \label{Phprop:normapprox}
 	\item $\trinorm{P_h f}\leq \trinorm{f}+E h \norm{f}_s$ \label{Phprop:boundweak}
 	\item $\trinorm{P_h f + i(f - P_h f) } \leq \trinorm{f} +E h \norm{f}_s$. \label{Phprop:trinormapprox}
\end{enumerate}
\end{definition}

\begin{remark}\label{rem:linear_h}
In general, we could relax Items \ref{Phprop:normapprox},\ref{Phprop:boundweak} \ref{Phprop:trinormapprox}, substituting $h$ by $h^{\alpha}$, with $\alpha>0$
in the whole paper, or more generally substitute $K\cdot h$ and $E\cdot h$ by functions $K(h)$ and $E(h)$ that go to $0$ fast enough as $h$ goes to $0$; this is not needed for the projections and functional spaces we study in this paper, but most of the theory adapts to these more general conditions with few differences.
\end{remark}
\begin{remark}
Condition \ref{Phprop:trinormapprox} in Definition \ref{def:comp_discretization} is used to control the error when the discretization of the operator is not $i$-preserving (see Definition \ref{def:discretized_operators} and Remark \ref{rem:ipreserv}).
\end{remark}
        
\begin{definition}
We will call the finite dimensional space $\mathcal{U}_h:=P_h(\mathcal{U}_s)$ the
\textbf{approximating space}.

The strong norm, the weak norm and the $L^1$ norm induce norms on $\mathcal{U}_h$,
that will use the same notation.
\end{definition}

\begin{hypothesisapproximatingspace}
We assume that the norms on $\mathcal{U}_h$ satisfy the following inequality; there exist 
$M$ and $\alpha$ such that for each $f\in \mathcal{U}_h$
\begin{equation}\label{NS}
    \norm{f}_s \leq \frac{1}{h^{\alpha}} M\norm{f}.
\end{equation}
\end{hypothesisapproximatingspace}
\begin{remark}
Such an inequality is usually false on $\mathcal{U}_s$, but the approximating space 
$\mathcal{U}_h$ is finite dimensional. If we let  
$\norm{f}_{ns} = \norm{f}_s+\norm{f}$, this is a norm on $\mathcal{U}_h$ and 
there exists constants
$\Lambda(h), \Theta(h)$, depending on $h$,  such that 
\[
\Lambda(h)\norm{f} \leq \norm{f}_{ns} \leq \Omega(h) \norm{f};
\]
remark that as $h = 1/n$ goes to $0$, $\Omega(h)$ may go to infinity. 

The rate at which $\Omega(h)$ goes to infinity depends on the chosen norms 
and approximation schemes.
For the schemes presented in the current paper $\Theta(h) = M/h$; 
this is not true in general, in other cases, as in Chebyshev and Fourier 
discretization $\Theta(h)$
may grow faster as shown by Markov-Bernstein inequalities \cite{Milovanovic}; 
as an example, suppose  $\mathcal{U}_s = C^1([0,1])$, $\norm{f}_s = \norm{f'}_{\infty}$, 
$\norm{f} = \norm{f}_{\infty}$ and $P_n$ is the map that associates 
to $f$ its Chebyshev interpolant of degree $n$.

In this case, by $\norm{f-P_n f}\leq O(h)\norm{f}_s$ but, by Markov-Bernstein 
\[
    \norm{p'}_{\infty}\leq n^2 \norm{p}_{\infty}, 
\]
for all $p$ polynomial of degree at most $n$, i.e., $\Theta(h)=1/h^2$. 
\end{remark}

\begin{remark}
Another possible generalization is to allow 
\[
||f||_{L^1}\leq \tilde{M}||f||, 
\]
instead of fixing $\tilde{M}$ to be equal to $1$ as in Assumption \ref{assumption:space}.
Again, this is not needed in our current paper, but our methods can be adapted to this case.
\end{remark}

\begin{definition}\label{def:discretized_operators}
Given a compatible discretization $P_h$ we define the
\textbf{discretized operator} to be
\[
L_h:\mathcal{U}_h\to \mathcal{U}_h, \quad L_h := P_h L P_h,
\]
and we define the \textbf{$i$-preserving discretized operator} to be
\begin{equation} \label{defQh}
    Q_h:\mathcal{U}_h\to \mathcal{U}_h, \quad Q_h f := L_h f + 1\cdot(i(f)-i(L_h f)).
\end{equation}
\end{definition}
\begin{remark}\label{rem:ipreserv}
In the two explicit discretizations presented in this paper, $i$ is the integral with respect to the Lebesgue measure. The name $i$-preserving may be interpreted, in these discretizations, as a nickname for integral preserving.
\end{remark}

\begin{remark}
Depending on the chosen compatible discretization, $L_h$ may preserve $i$.
In this case $Q_h$ and $L_h$ are going to denote the same operator.
\end{remark}

\begin{remark}
From assumption \ref{assumption:space} item \eqref{ass:strong_norm_1}, follows that:
\begin{equation}
\norm{Q_h f}_s \leq \norm{L_h f}_s. \label{Phprop:freenormalization}
\end{equation}

\end{remark}
From the properties of a compatible discretization follows a straightforward result
on the operators $L_h$ and $Q_h$.

\begin{corollary} \label{cor:discLY}
Let $P_h$ be a compatible discretization, and suppose that
$L$ satisfies a one step Lasota-Yorke inequality~\eqref{genericLY} with coefficients $A$ and $B$. 
Then, if $h$ is small enough, a one step Lasota-Yorke inequality holds for $L_h$ and $Q_h$:
for all $f\in \mathcal{U}_s$
\begin{equation}\label{eq:discLY_gen}
\norm{Q_h f}_s\leq \norm{L_h f}_s\leq (A+EhB)\norm{f}_s+B\trinorm{f}.
\end{equation}
Moreover, for all $f_h\in \mathcal{U}_h$, we have a stronger one step Lasota-Yorke inequality, since $P_h f_h=f_h$:
\begin{equation}\label{eq:discLY_disc}
\norm{Q_h f_h}_s\leq\norm{L_h f_h}_s\leq A\norm{f_h}_s+B\trinorm{f_h}.
\end{equation}
\end{corollary}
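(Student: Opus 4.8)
The statement bundles together three inequalities, all of which I would obtain by chaining the Lasota--Yorke inequality \eqref{genericLY} for $L$ with the defining properties of a compatible discretization, so the plan is to reduce everything to a single bound on $\norm{L_h f}_s$. The leftmost comparison $\norm{Q_h f}_s \leq \norm{L_h f}_s$ appearing in both \eqref{eq:discLY_gen} and \eqref{eq:discLY_disc} is the cheapest ingredient: since $Q_h f$ and $L_h f$ differ only by the multiple $1\cdot(i(f)-i(L_h f))$ of the constant function, and $\norm{1}_s=0$ by Assumption \ref{assumption:space} item \eqref{ass:strong_norm_1}, the seminorm is unaffected; this is precisely the content of \eqref{Phprop:freenormalization}. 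Hence it suffices to estimate $\norm{L_h f}_s$, and the two $Q_h$ bounds follow for free.

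For the general inequality \eqref{eq:discLY_gen} I would write $L_h f = P_h\bigl(L P_h f\bigr)$ and strip the operators one layer at a time. First discard the outer projection using the strong-norm contraction, Item \ref{Phprop:strong_norm_contraction}, to get $\norm{L_h f}_s \leq \norm{L P_h f}_s$. Then apply \eqref{genericLY} to $L P_h f$, obtaining $\norm{L P_h f}_s \leq A\norm{P_h f}_s + B\trinorm{P_h f}$. Finally control the two remaining pieces: the strong term again by Item \ref{Phprop:strong_norm_contraction}, namely $\norm{P_h f}_s \leq \norm{f}_s$, and the weak term by Item \ref{Phprop:boundweak}, namely $\trinorm{P_h f}\leq \trinorm{f}+Eh\norm{f}_s$. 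Collecting the two contributions proportional to $\norm{f}_s$ produces the announced coefficient $A+EhB$ while leaving $B\trinorm{f}$ intact.

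The stronger inequality \eqref{eq:discLY_disc} on the approximating space is where the sharpening comes from, and I would isolate the single structural fact responsible for it: if $f_h\in\mathcal{U}_h=P_h(\mathcal{U}_s)$ then $P_h f_h = f_h$ because $P_h$ is a projection, so the inner projection in $L_h f_h = P_h L P_h f_h$ disappears and $L_h f_h = P_h L f_h$. Repeating only the first two steps above --- contraction of the outer $P_h$ followed by \eqref{genericLY} --- then yields $A\norm{f_h}_s + B\trinorm{f_h}$ directly, with no recourse to Item \ref{Phprop:boundweak} and therefore no $EhB$ correction. This is exactly why the approximation-error term is present in \eqref{eq:discLY_gen} but absent in \eqref{eq:discLY_disc}.

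I do not expect a genuine obstacle here: the argument is entirely a matter of bookkeeping, the only real care being to invoke the right property --- the contraction Item \ref{Phprop:strong_norm_contraction} versus the weak-norm bound Item \ref{Phprop:boundweak} --- at the right stage and in the right order. The lone place where the hypothesis ``$h$ small enough'' is actually needed is to ensure $A+EhB<1$, so that \eqref{eq:discLY_gen} is a bona fide one step Lasota--Yorke inequality in the sense of Assumptions on the operator \ref{ass:5} rather than a merely formal estimate; since $A<1$ this holds as soon as $h<(1-A)/(EB)$ (and for every $h$ when $EB=0$).
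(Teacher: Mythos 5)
Your proof is correct and follows essentially the same chain of inequalities as the paper's: peel off the outer $P_h$ by the strong-norm contraction, apply \eqref{genericLY}, then bound $\norm{P_h f}_s$ and $\trinorm{P_h f}$ via Items \ref{Phprop:strong_norm_contraction} and \ref{Phprop:boundweak}, with the inner projection vanishing when $f_h\in\mathcal{U}_h$. Your added remarks on the $Q_h$ comparison via \eqref{Phprop:freenormalization} and on where ``$h$ small enough'' enters are consistent with the surrounding text of the paper.
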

\begin{proof}
For $f\in \mathcal{U}_s$ and the properties of a compatible discretization we have that
\begin{align*}
\norm{L_h f}_s&=\norm{P_h L P_h f}_s\leq \norm{L P_h f}_s\leq A\norm{P_h f}_s+B\trinorm{P_h f}\\
&\leq A\norm{f}_s+B(Eh\norm{f}_s+\trinorm{f});
\end{align*}
if $f_h\in \mathcal{U}_h$ we have that $P_h f_h=f_h$, from this follows:
\[
\norm{L_h f_h}_s=\norm{P_h L P_h f_h}_s=\norm{P_h Lf_h}_s\leq A\norm{f_h}_s+B\trinorm{f_h}.
\]
\end{proof}

\begin{corollary} \label{cor:KeLi}
Applying repeatedly the Lasota-Yorke inequality of Corollary~\ref{cor:discLY}, we get for all $k\in\mathbb{N}$ and $f_h\in\mathcal{U}_h$
\begin{align*}
\norm{Q_h^k f_h}_s \leq A^k \norm{f_h}_s + \frac{B}{1-A}\max_i(\trinorm{Q^i_h}) \trinorm{f_h}.
\end{align*}

If $f\in\mathcal{U}_s$ we have that $Q_h f\in\mathcal{U}_h$, therefore
\begin{align*}
\norm{Q_h^k f}_s \leq A^{k-1}(A+E h B) \norm{f}_s + \frac{B}{1-A}\max_i(\trinorm{Q^i_h}) \trinorm{f}.
\end{align*}
\end{corollary}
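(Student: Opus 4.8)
The plan is to derive both estimates by iterating the two Lasota--Yorke inequalities of Corollary~\ref{cor:discLY}. The crucial structural fact is that $Q_h$ maps $\mathcal{U}_h$ into itself, so that $Q_h^{j}f_h\in\mathcal{U}_h$ for every $j\ge 0$ whenever $f_h\in\mathcal{U}_h$; this lets us apply the sharper inequality~\eqref{eq:discLY_disc}, with leading coefficient exactly $A$, at each step rather than the weaker~\eqref{eq:discLY_gen}.

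First I would prove the bound for $f_h\in\mathcal{U}_h$. Applying~\eqref{eq:discLY_disc} to $Q_h^{j}f_h\in\mathcal{U}_h$ gives $\norm{Q_h^{j+1}f_h}_s\le A\norm{Q_h^{j}f_h}_s+B\trinorm{Q_h^{j}f_h}$, and unrolling this recursion from $j=k-1$ down to $j=0$ telescopes the strong-norm part into $A^{k}\norm{f_h}_s$ while accumulating the weak-norm part as $B\sum_{j=0}^{k-1}A^{k-1-j}\trinorm{Q_h^{j}f_h}$. I would then bound each term uniformly using the operator norm, $\trinorm{Q_h^{j}f_h}\le\max_i\trinorm{Q_h^{i}}\,\trinorm{f_h}$, factor it out, and sum the geometric series $\sum_{j=0}^{k-1}A^{k-1-j}=\sum_{l=0}^{k-1}A^{l}\le\frac{1}{1-A}$, which is finite because $A<1$. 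This produces exactly the first claimed inequality.

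For $f\in\mathcal{U}_s$ the starting point is no longer in $\mathcal{U}_h$, but $Q_h f$ is, so I would run the same telescoping argument down to $Q_h f$ and only at the final step replace $\norm{Q_h f}_s$ using the general inequality~\eqref{eq:discLY_gen}, $\norm{Q_h f}_s\le(A+EhB)\norm{f}_s+B\trinorm{f}$. Concretely the unrolling now reads $\norm{Q_h^{k}f}_s\le A^{k-1}\norm{Q_h f}_s+B\sum_{j=1}^{k-1}A^{k-1-j}\trinorm{Q_h^{j}f}$, and substituting the general inequality isolates the stated strong-norm coefficient $A^{k-1}(A+EhB)$.

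The only delicate point -- the main obstacle in an otherwise mechanical induction -- is the bookkeeping of the weak-norm contributions in this last step. Substituting~\eqref{eq:discLY_gen} leaves a stray summand $A^{k-1}B\trinorm{f}$ alongside the sum $B\sum_{j=1}^{k-1}A^{k-1-j}\trinorm{Q_h^{j}f}$. I would absorb the stray term by noting $\max_i\trinorm{Q_h^{i}}\ge\trinorm{Q_h^{0}}=1$, so that $A^{k-1}B\trinorm{f}\le A^{k-1}B\max_i\trinorm{Q_h^{i}}\trinorm{f}$, and then recombine everything into $B\max_i\trinorm{Q_h^{i}}\trinorm{f}\sum_{l=0}^{k-1}A^{l}\le\frac{B}{1-A}\max_i\trinorm{Q_h^{i}}\trinorm{f}$. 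It is worth stressing that one must bound $\trinorm{Q_h^{j}f}$ directly rather than invoking the already-proved first inequality on $Q_h f$ as a black box: the latter route would pass through $\trinorm{Q_h f}\le\max_i\trinorm{Q_h^{i}}\trinorm{f}$ and yield an inferior factor $(\max_i\trinorm{Q_h^{i}})^2$, whereas the direct unrolling keeps a single power of $\max_i\trinorm{Q_h^{i}}$, matching the statement.
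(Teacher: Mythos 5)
Your proposal is correct and follows essentially the same route as the paper: unroll the sharp discrete inequality \eqref{eq:discLY_disc} along the orbit in $\mathcal{U}_h$, bound each weak-norm term by $\max_i\trinorm{Q_h^i}\,\trinorm{\cdot}$, sum the geometric series, and for general $f\in\mathcal{U}_s$ peel off one application of \eqref{eq:discLY_gen} at the first step. Your bookkeeping of the stray $A^{k-1}B\trinorm{f}$ term is in fact slightly more careful than the paper's (which glosses over the same index shift), so there is nothing to add.
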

\begin{proof}
If $f_h\in\mathcal{U}_h$ then
\begin{align*}
\norm{Q_h^k f_h}_s &\leq A \norm {Q_h^{k-1}f_h}_s + B\trinorm{Q_h^{k-1}f_h} \\
& \leq A (A\norm {Q_h^{k-2}f_h}_s + B\trinorm{Q_h^{k-2}f_h}) + B\trinorm{Q_h^{k-1}f_h}\\
& \leq \dots \leq A^k \norm{f_h}_s + B\sum_{j=1}^{k-1} A^{k-1-j}\trinorm{Q_h^j f_h}\\
& \leq A^k \norm{f_h}_s + \frac{B}{1-A}\max_i(\trinorm{Q^i_h}) \trinorm{f_h}.
\end{align*}
If $f\in\mathcal{U}_s$ we have that $Q_h f\in\mathcal{U}_h$, therefore
\begin{align*}
\norm{Q_h^k f}_s &= \norm{Q_h^{k-1} Q_h f}_s\\
&\leq A^{k-1} \norm{Q_h f}_s + B\sum_{j=1}^{k-2} A^{k-1-j}\trinorm{Q_h^{j+1} f}\\
&\leq A^{k-1}(A+E h B) \norm{f}_s+ B\sum_{j=1}^{k-1} A^{k-1-j}\trinorm{Q_h^{j+1} f}\\
& \leq A^{k-1}(A+E h B) \norm{f}_s + \frac{B}{1-A}\max_i(\trinorm{Q^i_h}) \trinorm{f}.
\end{align*}
\end{proof}

\begin{remark}
Remark that if the Lasota-Yorke inequality \eqref{eq:discLY_gen} is satisfied for a 
discretization of size $n = 1/h$ then for all discretizations with $\tilde{n}>n$
we have that $1/\tilde{n}=\tilde{h}<h$ and so inequality \eqref{eq:discLY_gen} is 
satisfied for all finer discretizations and for the original operator $L$.
This permits us to prove, in a similar fashion as Corollary \ref{cor:KeLi} that we have a uniform
iterated Lasota-Yorke inequality.
This is the main hypothesis we need to satisfy so that the spectral stability results of \cite{KeLi} holds.
\end{remark}

\section{Fixed point error estimation}\label{sec:3}
In this section, we describe an explicit strategy to derive certified approximations of the fixed point $u$ of $L$ by approximating it with an element
$u_h$ of the approximating space.

The following theorems give a slightly improved version of \cite[Theorem~3.1]{GaNi} in which we allow for an inexactly computed eigenvector $u_h$ and we take more care about the multiplicative factors $\norm{P_h}$.

\begin{definition}
Let us consider the generalized ``zero average'' spaces 
$$\mathcal{U}_w^0:=\{v\in \mathcal{U}_w \mid i(v)=0\},$$  
$$\mathcal{U}_s^0:=\{v\in \mathcal{U}_s \mid i(v)=0\}.$$
When dealing with the discretized operator $Q_h$, we denote by
$$\mathcal{U}_h^0:=\mathcal{U}_s^0\cap \mathcal{U}_h.$$
\end{definition}
\begin{remark}
When restricted to $\mathcal{U}_h$, the strong and the weak norm are equivalent, therefore
$\mathcal{U}_h^0$ can be equivalently defined as
\[
\mathcal{U}_h^0:=\mathcal{U}_w^0\cap \mathcal{U}_h.
\] 
\end{remark}
\begin{remark}
In Examples \ref{ex:ulam}, \ref{ex:lipschitz}, the space $\mathcal{U}_w^0$ is the space of average $0$ functions, i.e.,
\[
\mathcal{U}_w^0=\mathcal{U}_w\cap \{f \in L^1([0,1]) \mid \int f dm=0\}.
\]
Then
\[
\mathcal{U}_h^0=\{u_h \in \mathcal{U}_h \mid h \sum_i u_i\},
\]
where $u_i$ are the coordinates of $u_h$.
\end{remark}

\begin{theorem}\label{th:fixed_point}
In the framework of the assumptions on the spaces, operators and discretizations stated in Section \ref{sec:2}, Let $L$ be an operator operator with a fixed point $u$, normalized in a way that  $i(u)  = 1$, let $Q_h$ be an $i$-preserving discretized operator,
and let $u_h \in \mathcal{U}_h$ be any vector such that $\norm{Q_hu_h - u_h} \leq \varepsilon$, normalized so that $i(u) = i(u_h) = 1$.
Let $C_k$, for each $k\in\mathbb{N}$, be a constant such that
\begin{equation} \label{Ckbound}
  \norm{Q_h^k|_{\mathcal{U}_h^0}} \leq C_k, \quad k \in\mathbb{N}  
\end{equation}
and suppose that $\sum_{k=0}^{\infty} C_k<\infty$.
Then,
\begin{equation} \label{theorem1_estimate}
\norm{u-u_h} \leq \left(\sum_{k=0}^{\infty} C_k\right)(2 K h \left(1+\norm{L}\right)\norm{u}_s + \varepsilon).
\end{equation}
\end{theorem}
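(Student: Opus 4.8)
The plan is to bound the error $\norm{u - u_h}$ by comparing $u$ and $u_h$ through the action of the discretized operator $Q_h$, exploiting that both are (approximately) fixed and that $Q_h^k$ contracts on the zero-average subspace $\mathcal{U}_h^0$. The starting observation is that $u - u_h$ has $i$-average zero since $i(u) = i(u_h) = 1$; however, $u - u_h$ need not lie in $\mathcal{U}_h$, so one cannot directly apply the summability hypothesis \eqref{Ckbound}. The natural fix is to introduce the projected discretization $P_h u$ (or rather $Q_h u$, which lands in $\mathcal{U}_h$) as an intermediate quantity, and to use the discretization-error estimates from Definition~\ref{def:comp_discretization} to control $\norm{u - Q_h u}$ by a term of order $h\norm{u}_s$.

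First I would write the telescoping identity that expresses $u - u_h$ in terms of the resolvent-like sum $\sum_{k=0}^\infty Q_h^k$ acting on a zero-average ``defect'' vector. Concretely, since $Q_h u_h \approx u_h$, the vector $u_h$ is a near-fixed point of $Q_h$, and I would aim to show that $u_h$ approximates the genuine fixed point of $Q_h$ restricted to the appropriate affine space; the operator $\sum_k Q_h^k|_{\mathcal{U}_h^0}$ plays the role of $(I - Q_h)^{-1}$ on the zero-average part. The key algebraic step is the decomposition $u - u_h = (u - Q_h u) + (Q_h u - Q_h u_h) + (Q_h u_h - u_h)$, and then iterating: because $Q_h$ preserves $i$ and $u - u_h \in \mathcal{U}_w^0$, successive applications keep the relevant differences in zero-average spaces where the $C_k$ bounds apply. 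Summing the resulting geometric-type series produces the factor $\sum_{k=0}^\infty C_k$.

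The two error sources to isolate are then: the \emph{discretization error}, controlled by $\norm{u - P_h u} \leq K h \norm{u}_s$ (item~\ref{Phprop:normapprox}), which after passing through $L$ and $P_h$ contributes the term $2Kh(1 + \norm{L})\norm{u}_s$ — the factor $(1 + \norm{L})$ arising from estimating $\norm{P_h L P_h u - L u}$ by splitting as $\norm{P_h L(P_h u - u)} + \norm{(P_h L - L)u}$ and using $\norm{P_h f} \leq \norm{f}$; and the \emph{residual error} $\varepsilon = \norm{Q_h u_h - u_h}$, reflecting that $u_h$ is only an inexact eigenvector. Collecting these inside the resolvent sum yields exactly \eqref{theorem1_estimate}.

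The main obstacle I anticipate is the bookkeeping around whether each intermediate vector actually lies in $\mathcal{U}_h^0$ so that the hypothesis $\norm{Q_h^k|_{\mathcal{U}_h^0}} \leq C_k$ is legitimately applicable. The difference $u - u_h$ itself lives in $\mathcal{U}_w^0$ but not in $\mathcal{U}_h$, so the argument must route the estimate through $Q_h u - u_h \in \mathcal{U}_h^0$ (using that $Q_h u \in \mathcal{U}_h$ and $i(Q_h u) = i(u) = i(u_h)$), paying the discretization cost $\norm{u - Q_h u}$ once to cross from $\mathcal{U}_w^0$ into $\mathcal{U}_h^0$. Getting the constant in front of $Kh\norm{u}_s$ to come out as exactly $2(1+\norm{L})$ rather than something looser will require care in not double-counting the projection estimates and in using $i(u) = 1$ together with the normalization $\norm{1} = 1$; this is the delicate part that refines \cite[Theorem~3.1]{GaNi}.
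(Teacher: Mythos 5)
Your proposal is correct and follows essentially the same route as the paper's proof: set $v = u - u_h$, bound the defect $\norm{Q_h v - v} \leq \norm{Q_h u - u} + \norm{Q_h u_h - u_h} \leq 2Kh(1+\norm{L})\norm{u}_s + \varepsilon$ via Lemma~\ref{lemma:discbound} (with the factor $2$ coming from the $i$-preservation correction and the factor $1+\norm{L}$ from splitting $\norm{L_h u - Lu}$ and using $\norm{Lu}_s=\norm{u}_s$), then telescope to get $\norm{v} \leq \sum_{k}\norm{Q_h^k(Q_h v - v)} \leq \left(\sum_k C_k\right)\norm{Q_h v - v}$. Your concern about whether the intermediate vectors actually lie in $\mathcal{U}_h^0$ is well placed --- the paper simply asserts $u-u_h\in\mathcal{U}_h^0$, and your routing of the estimate through $Q_h u - u_h \in \mathcal{U}_h^0$ is the cleaner way to justify invoking the $C_k$ bounds.
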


Before the proof of the theorem we need to perform some technical estimates.

\begin{lemma} \label{lemma:discbound}
Let $L: \mathcal{U}_s \to \mathcal{U}_s$ be an $i$-preserving operator,
$L_h$ and $Q_h f$ discretized operators as in Definition \ref{def:discretized_operators} obtained by a compatible discretization as in Definition \ref{def:comp_discretization}.
Then,
\[
	\norm{Q_h f - Lf} \leq 2 K h \left(\norm{L}\norm{f}_s + \norm{Lf}_s\right).
\]
\end{lemma}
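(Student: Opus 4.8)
The plan is to reduce the whole estimate to two applications of the approximation property (Item \ref{Phprop:normapprox}) of the compatible discretization, since that is the only place where a factor $Kh$ enters; the asserted constant $2Kh$ should then come out of using that property exactly twice, together with a factor $2$ produced by the $i$-preserving correction term.

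First I would expand the difference from the definition of $Q_h$ and exploit that $L$ is $i$-preserving. Writing $Q_h f - Lf = (L_h f - Lf) + 1\cdot\big(i(f)-i(L_h f)\big)$ and using $i(f)=i(Lf)$, the correction term becomes $1\cdot i(Lf - L_h f)$. Since $\norm{1}=1$ and $\abs{i(g)}\leq \norm{g}$ by Assumption \ref{assumption:space}, its weak norm is bounded by $\norm{Lf - L_h f}$. Hence the triangle inequality gives $\norm{Q_h f - Lf}\leq 2\norm{L_h f - Lf}$, and it remains only to estimate the main term $\norm{L_h f - Lf}=\norm{P_h L P_h f - Lf}$.

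For this I would insert the intermediate quantity $P_h Lf$ and split $P_h L P_h f - Lf = P_h L(P_h f - f) + (P_h Lf - Lf)$. The second summand is controlled directly by Item \ref{Phprop:normapprox} applied to $Lf\in\mathcal{U}_s$, giving $\norm{P_h Lf - Lf}\leq Kh\norm{Lf}_s$. For the first summand I would use the weak-norm contraction of $P_h$ (Item \ref{Phprop:norm1}) to discard the outer projection, then bound $L$ by its operator norm and apply Item \ref{Phprop:normapprox} to $f$, obtaining $\norm{P_h L(P_h f - f)}\leq \norm{L}\,\norm{P_h f - f}\leq \norm{L}\,Kh\norm{f}_s$. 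Adding the two pieces yields $\norm{L_h f - Lf}\leq Kh(\norm{L}\norm{f}_s + \norm{Lf}_s)$, and combining with the factor $2$ from the previous step gives the claim.

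The argument is essentially a careful bookkeeping of triangle inequalities, so I expect no serious analytic obstacle; the only point requiring genuine attention is the correction term, where the $i$-preservation of $L$ is exactly what lets me rewrite $i(f)-i(L_h f)$ as $i$ of the \emph{same} main difference $Lf - L_h f$, and thereby absorb it into a clean factor of $2$ rather than into an independent error contribution. I would also keep in mind that the hypothesis $L:\mathcal{U}_s\to\mathcal{U}_s$ is what guarantees that $\norm{Lf}_s$ is finite, so that Item \ref{Phprop:normapprox} may legitimately be applied to $Lf$.
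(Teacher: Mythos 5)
Your proposal is correct and follows essentially the same route as the paper: the same factor-of-two reduction of the correction term $1\cdot(i(f)-i(L_hf))$ via $i$-preservation and $\abs{i(g)}\leq\norm{g}$, and the same splitting $L_hf-Lf=P_hL(P_hf-f)+(P_hLf-Lf)$ handled by Items \ref{Phprop:norm1} and \ref{Phprop:normapprox}. The only (cosmetic) difference is that you correctly state the first step as a triangle inequality, whereas the paper writes it as an equality.
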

\begin{proof}
The inequality follows by combining
\begin{align*}
\norm{Q_h f - Lf} &= \norm{L_h f - Lf} + \norm{1 \cdot  (i(Lf) - i(L_hf))}\\
 & \leq 2\norm{L_h f - Lf}
\end{align*}
where we used \ref{assumption:space}, item 1 and 3, and
\begin{align*}
	\norm{L_h f - Lf} &\leq \norm{P_h L (P_h - I)f} + \norm{(P_h-I)Lf}\\
	& \leq \norm{L}K h \norm{f}_s + K h \norm{Lf}_s.
	\qedhere
\end{align*}
\end{proof}

\begin{corollary}\label{coro}
If $u\in\mathcal{U}_s$ is a fixed point of the operator $L$, then
\[
\norm{Q_h u - u} \leq 2 K h \left(\norm{L}+1\right)\norm{u}_s.
\]
\end{corollary}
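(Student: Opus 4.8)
The plan is to apply Lemma~\ref{lemma:discbound} directly, using the fact that $u$ is a fixed point of $L$ to simplify the right-hand side. Since $u$ satisfies $Lu = u$, I would substitute $f = u$ into the conclusion of the lemma, obtaining
\[
\norm{Q_h u - u} = \norm{Q_h u - Lu} \leq 2 K h \left(\norm{L}\norm{u}_s + \norm{Lu}_s\right).
\]
The key observation is then that $Lu = u$ implies $\norm{Lu}_s = \norm{u}_s$, so the two terms inside the parenthesis combine into $\left(\norm{L}+1\right)\norm{u}_s$, which is exactly the claimed bound.

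The only subtlety I should verify is that the hypotheses of Lemma~\ref{lemma:discbound} are met, namely that $L$ is $i$-preserving. This is part of the standing assumptions on the operator (Hypothesis~\ref{ass:5}, which gives $i(Lf) = i(f)$), so the lemma applies without further work. I would also note that $u \in \mathcal{U}_s$ is required for the strong norm $\norm{u}_s$ to make sense; this is guaranteed since $u$ is assumed to be a fixed point in $\mathcal{U}_s$, and indeed Corollary~\ref{cor:LYstrongbound} already presupposes such regularity for fixed points of $L$.

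I do not expect any genuine obstacle here: the statement is an immediate specialization of Lemma~\ref{lemma:discbound} to a fixed point, and the entire content is the replacement of $\norm{Lu}_s$ by $\norm{u}_s$ using invariance. The proof is therefore a one-line computation chaining the lemma with the identity $Lu = u$.
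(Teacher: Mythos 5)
Your proof is correct and is essentially identical to the paper's: both apply Lemma~\ref{lemma:discbound} with $f=u$, use $Q_h u - u = Q_h u - Lu$, and replace $\norm{Lu}_s$ by $\norm{u}_s$ via the fixed-point identity. No issues.
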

\begin{proof}
By the previous lemma
\[
\norm{Q_h u - u} = \norm{Q_h u - Lu} \leq 2 K h \left(\norm{L}\norm{u}_s + \norm{Lu}_s\right), 
\]
observing that $\norm{Lu}_s = \norm{u}_s$ we have the thesis.
\end{proof}

\begin{proof}[Proof of Theorem \ref{th:fixed_point}]
Let $v = u - u_h \in \mathcal{U}_h^0$. Note that from Corollary \ref{coro} we get 
\[
\norm{Q_h v - v} \leq \norm{Q_h u - u} + \norm{Q_h u_h - u_h} \leq 2 K h \left(1+\norm{L}\right)\norm{u}_s + \varepsilon.
\]
By the triangle inequality,
\[
\norm{v} \leq \norm{Q_h v-v} + \norm{Q_h^2 v - Q_h v} + \norm{Q_h^3 v - Q_h^2 v} + \dots + \norm{Q_h^m v - Q_h^{m-1} v} + \norm{Q_h^m v},
\]
and since $\lim_{m\to\infty} \norm{Q_h^m v} \leq \lim_{m\to\infty} C_m \norm{v} = 0$ we can take the limit obtaining
\begin{equation} \label{boundv}
	\norm{v} \leq  \sum_{k=0}^{\infty}\norm{Q_h^k (Q_h v-v)}  \leq \left(\sum_{k=0}^{\infty} C_k\right) \norm{Q_h v-v}.
\end{equation}
Combining these inequalities we get:
\[
\norm{v} \leq \left(\sum_{k=0}^{\infty} C_k\right) (2 K h \left(1+\norm{L}\right)\norm{u}_s + \varepsilon).
\]
\end{proof}

While Theorem~\ref{th:fixed_point} requires an infinite sum, the following lemma shows that it is sufficient to find a value $m$ with $C_m < 1$ to prove the convergence of the series.
\begin{lemma} \label{lem:onembound}
Let $Q_h$ be an $i$-preserving discretized operator, and $C_k$ be constants such that   $\norm{Q_h^k|_{\mathcal{U}_{h}^0}} \leq C_k$ for each $k=0,1,2,\dots$. Suppose that $C_m < 1$ for some positive integer $m$. Then,
\begin{enumerate}
  \item $\sum_{k=0}^{\infty} C_k \leq \frac{1}{1-C_m}(C_0+C_1+\dots+C_{m-1}) < \infty$;
  \item there are real constants $C > 0, \lambda_2 \in (0,1)$ such that $\norm{Q_h^k|_{\mathcal{U}_{h}^0}} \leq C \lambda_2^k$ for each $k$.
\end{enumerate}
\end{lemma}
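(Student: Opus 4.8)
The plan is to exploit two facts: that the zero-average space $\mathcal{U}_h^0$ is invariant under $Q_h$, and that operator norms of powers are submultiplicative. First I would verify the invariance. Since $Q_h$ is $i$-preserving, for any $v\in\mathcal{U}_h^0$ we have $i(Q_h v)=i(v)=0$, and $Q_h v\in\mathcal{U}_h$ by construction (see Definition~\ref{def:discretized_operators}), so $Q_h v\in\mathcal{U}_h^0$. Hence $T:=Q_h|_{\mathcal{U}_h^0}$ is a genuine operator on $\mathcal{U}_h^0$ with $T^k=Q_h^k|_{\mathcal{U}_h^0}$, and its powers obey $\norm{T^{a+b}}\leq\norm{T^a}\norm{T^b}$. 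In particular $\norm{T^m}\leq C_m<1$.

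The next step is Euclidean division: every $k\in\mathbb{N}$ can be written as $k=qm+r$ with $0\leq r\leq m-1$ and $q=\lfloor k/m\rfloor$. Submultiplicativity then yields
\[
\norm{Q_h^k|_{\mathcal{U}_h^0}}=\norm{T^{qm+r}}\leq\norm{T^m}^q\,\norm{T^r}\leq C_m^q\,C_r .
\]
Here I would note a point of interpretation: the series bound in item~(1) is really a statement about $\sum_k\norm{Q_h^k|_{\mathcal{U}_h^0}}$, so to read it literally with the stated constants one understands the $C_k$ for $k\geq m$ as the submultiplicative bounds $C_m^{\lfloor k/m\rfloor}C_{k\bmod m}$, which are valid upper bounds by the display above. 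With this in hand, item~(1) follows by grouping the sum into residue classes modulo $m$ and summing a geometric series using $C_m<1$:
\[
\sum_{k=0}^{\infty}\norm{Q_h^k|_{\mathcal{U}_h^0}}=\sum_{r=0}^{m-1}\sum_{q=0}^{\infty}\norm{T^{qm+r}}\leq\sum_{r=0}^{m-1}C_r\sum_{q=0}^{\infty}C_m^{q}=\frac{1}{1-C_m}\sum_{r=0}^{m-1}C_r<\infty .
\]

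For item~(2) I would set $\lambda_2:=C_m^{1/m}\in(0,1)$, so that $\lambda_2^m=C_m$. Writing $k=qm+r$ as above, we have $C_m^{q}=\lambda_2^{qm}=\lambda_2^{k-r}=\lambda_2^{k}\lambda_2^{-r}$, and since $0\leq r\leq m-1$ and $\lambda_2<1$ we get $\lambda_2^{-r}\leq\lambda_2^{-(m-1)}$. Combining with the submultiplicative bound gives $\norm{Q_h^k|_{\mathcal{U}_h^0}}\leq C_m^{q}C_r\leq C\lambda_2^{k}$ with $C:=\lambda_2^{-(m-1)}\max_{0\leq r<m}C_r$, which is the desired exponential decay. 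The mathematics here is routine once the invariance is recorded; the only real care needed, and the place I would be most careful, is in relating the finitely many computed constants $C_0,\dots,C_m$ to the norms of all higher powers via submultiplicativity, so that the infinite-sum statement in item~(1) is correctly interpreted.
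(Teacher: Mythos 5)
Your proof is correct and follows essentially the same route as the paper's: Euclidean division $k=qm+r$, submultiplicativity on the $Q_h$-invariant subspace $\mathcal{U}_h^0$ giving $C_m^qC_r$, a geometric series for item~(1), and $\lambda_2=C_m^{1/m}$ for item~(2) (your constant $C=\lambda_2^{-(m-1)}\max_rC_r$ is marginally sharper than the paper's $\max_rC_r/C_m$, but both work). Your explicit remark that for $k\ge m$ the stated sum must be read with the submultiplicative bounds in place of arbitrary $C_k$ is a fair clarification of a point the paper's proof glosses over.
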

\begin{proof}
Let $k\in\mathbb{N}$, and use Euclidean division with remainder to write $k = qm +r$. In particular, we have $r\in \{0,1,\dots,m-1\}$ and $k < (q+1)m$.

Since $Q_h$ is $i$-preserving, $Q_h(\mathcal{U}_h^0) \subseteq \mathcal{U}_h^0$, hence we can write
\[
\norm{Q_h^k|_{\mathcal{U}_{h}^0}} \leq \norm{(Q_h^m|_{\mathcal{U}_{h}^0})^q (Q_h^r|_{\mathcal{U}_{h}^0})} \leq C_m^q C_r.
\]
Then the first estimate follows by summing over all possible $k$
\begin{align*}
  \sum_{k=0}^{\infty} C_k &\leq C_m^0(C_0 + C_1+\dots + C_{m-1}) + C_m^1(C_0 + C_1+\dots + C_{m-1})\\
  &  \qquad + C_m^2(C_0 + C_1+\dots + C_{m-1}) + \dots \\
  & \leq (1+C_m + C_m^2 + \dots )(C_0 + C_1+\dots + C_{m-1}).
\end{align*}
The second estimate follows instead from noting that 
\[
C_m^q C_r \leq (C_m)^{\frac{k}{m}-1} \max(C_0,C_1,\dots,C_{m-1}),
\]
and thus we can take
\[
C = \frac{\max(C_0,C_1,\dots,C_{m-1})}{C_m}, \quad \lambda_2 = (C_m)^{\frac1m}.
\]
\end{proof}
The first estimate is tighter and is the one that we shall use in numerical computation; the second one is looser but it gives an explicit bound with a geometric series.

\begin{remark} \label{remconv}
The sequence  $\norm{Q_h^k|_{\mathcal{U}_{h}^0}}$ is related to the speed of convergence to equilibrium of the system mentioned in note \ref{note1}. Even if these norms are explicitly computable, since $Q^k_h$ is a finite rank operator and can be represented by a matrix, we are not going to compute
an enclosure for the norm, but just an upper bound $C_k$, which is enough for our treatment and more practical to compute. 

In the case of Markov Transfer operators, this sequence is also related to the convergence of equilibrium of the system, indeed if $\mu$ is invariant for the system and $\nu $ is another probability measure in the strong space we have that $\mu-\nu \in {\mathcal{U}_s^0}$ and hence the convergence  to zero in the weak norm of $Q^k_h(\mu-\nu)$ can be estimated by the  sequence $C_k$. Note also that $Q^k_h(\mu-\nu)=\mu -Q^k_h(\nu)$.
\end{remark}

\begin{remark}
In Theorem  \ref{th:fixed_point} we have a summability condition on $C_k$. We remark that in the statement and in the proof of the theorem
we could exchange the role of $L$ and $Q_h$. If
we could prove that $\sum_k \norm{L^k|_{\mathcal{U}^0_s}}$ is summable and find an estimate for each term, this would give us an \emph{a-priori} bound on the approximation error, but in general this a difficult task already for simple maps, as one-dimensional piecewise expanding ones, in the case there is not a Markov partition.

The flexibility of our method lies in the fact that the bound in Theorem \ref{th:fixed_point} uses an a-posteriori, computer-assisted estimate which is computed on a finite-dimensional operator $Q_h$: in some sense, we ask the computer to estimate the convergence to equilibrium of the system at a finite resolution.
This task is possible even if the dynamics is quite complicated. Of course the complexity increases with the resolution, and to optimize this we have to find a suitable strategy. This is the theme of next section.
\end{remark}


\subsection{The approximation error can be made as small as wanted} \label{sec:works}

Our error estimates are a-posteriori ones: one knows the quality of the approximation only after applying the algorithm. 
In this section we give an argument showing that if the spaces satisfy Assumptions \ref{assumption:space}, the discretization scheme satisfies
Definition \ref{def:comp_discretization} and the operator satisfies \ref{ass:5} we can approximate the stationary density as 
well as wanted; the argument here mirrors the one in \cite{GaNi} but works under the more general assumptions of this paper.


Suppose $\mathcal{U}_{s}\subseteq \mathcal{U}_{w}$ are two vector spaces of Borel signed measures on a certain metric space $X$ 
endowed with two norms, the strong norm $||~||_{s}$ on $\mathcal{U}_{s}$ and the weak
norm $||~||$ on $\mathcal{U}_{w}$, such that $||~||_{s}\geq ||~||$ as before.
Let $\overline{\delta} \geq 0.$ Let $L_\delta $, $\delta \in [0,\overline{\delta} )$ be a family of Markov operators acting on $\mathcal{U}_{w}$. Denote  by $\mathcal{U}^0_{s}, \mathcal{U}^0_{w}$ the ``zero average'' spaces of  $\mathcal{U}_{s}, \mathcal{U}_{w}$.

\begin{definition}
We say that $L:\mathcal{U}_{s} \to \mathcal{U}_{s}$ has exponential convergence to  equilibrium if 
there are $\lambda<0$ and $C\geq 0$ such that for each $n\geq 0$, $f\in \mathcal{U}^0_s$ $$||L_0^n f|| \leq Ce^{\lambda n}||f||_s.$$. 
\end{definition}

\begin{theorem}\label{TAW} Let $L_0$ be an linear operator acting on $\mathcal{U}_{s}, \mathcal{U}_{w}$,
having exponential convergence to equilibrium, and let $L_h=P_h L_0 P_h$ where $P_h$ is a compatible discretization.
Let $\bar{h}$ be small and suppose that for all $h\in [0, \bar{h})$
\begin{enumerate}
    \item $L_h$ are Markov operators acting on $\mathcal{U}_w$ and $\mathcal{U}_s$,
    \item $L_h$ satisfy \eqref{LYsw} with constants uniform in $h$,
    \item $L_h$ satisfy \eqref{eq3} with constant uniform in $h$.
\end{enumerate}   
Then we can apply Theorem \ref{th:fixed_point}, finding constant $C_k$ such that when $h$ and $\varepsilon$ are small enough, $||u-u_h||$ in  \ref{theorem1_estimate} is as small as wanted.
\end{theorem}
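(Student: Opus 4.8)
The plan is to reduce the whole statement to the single task of producing bounds $C_k \geq \|Q_h^k|_{\mathcal{U}_h^0}\|$ whose sum is finite and, crucially, stays bounded as $h\to 0$. Indeed, $L_0$ is Markov with exponential convergence to equilibrium, so it has a unique invariant density $u$ with $i(u)=1$, and once such $C_k$ are available Theorem~\ref{th:fixed_point} (applied with $L=L_0$) gives
\[
\norm{u-u_h} \leq \Big(\sum_{k=0}^\infty C_k\Big)\big(2 K h(1+\norm{L_0})\norm{u}_s + \varepsilon\big).
\]
Here Corollary~\ref{cor:LYstrongbound} bounds $\norm{u}_s$ by a constant independent of $h$, so if $\sum_k C_k$ remains bounded while $h,\varepsilon\to 0$ the right-hand side tends to $0$. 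Thus everything rests on a \emph{uniform} summability of the $C_k$, that is, on a uniform convergence to equilibrium for the family $Q_h$.

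To assemble the $C_k$ I would first collect the uniform ingredients supplied by the hypotheses. Hypothesis (2), together with \eqref{Phprop:freenormalization} and the argument of Corollary~\ref{cor:discLY}, yields a one-step Lasota--Yorke inequality for $Q_h$ with coefficients uniform in $h$, and iterating as in Corollary~\ref{cor:KeLi} controls the strong norm of all powers. Hypothesis (3) gives a uniform weak bound $\norm{Q_h^k}\leq W$; since $Q_h$ preserves $i$ and hence maps $\mathcal{U}_h^0$ into itself, this furnishes a uniform crude bound $C_k \leq W'$ for every $k$. Finally, Lemma~\ref{lemma:discbound} combined with the Lasota--Yorke bound on $\norm{L_0 f}_s$ shows the discretization is a small perturbation in the strong-to-weak sense, $\norm{Q_h f - L_0 f}\leq \mathrm{const}\cdot h\,(\norm{f}_s+\norm{f})$, so that $Q_h\to L_0$ in the $\norm{\cdot}_s\to\norm{\cdot}$ operator topology as $h\to 0$.

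The decisive step is to upgrade the convergence to equilibrium of $L_0$ into a uniform one for $Q_h$. The exponential convergence to equilibrium of the Markov operator $L_0$, together with the quasi-compactness coming from its Lasota--Yorke inequality, means that $1$ is a simple isolated leading eigenvalue and the remainder of the spectrum lies in a disk of radius strictly less than $1$; in other words $L_0$ has a spectral gap with complementary invariant subspace $\mathcal{U}_s^0$. I would then invoke the Keller--Liverani spectral stability theorem \cite{KeLi}, whose hypotheses are exactly the uniform Lasota--Yorke inequality, the uniform weak bound, and the strong-to-weak closeness $\norm{Q_h-L_0}_{s\to w}=O(h)$ established above (this is the ``main hypothesis'' flagged after Corollary~\ref{cor:KeLi}). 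Its conclusion is that the gap is stable, so for $h$ small there exist $\bar C>0$ and $\bar\lambda\in(0,1)$, independent of $h$, with $\|Q_h^k|_{\mathcal{U}_h^0}\|\leq \bar C\bar\lambda^k$. Taking $C_k:=\bar C\bar\lambda^k$ gives $\sum_k C_k=\bar C/(1-\bar\lambda)<\infty$ uniformly, as required. Alternatively, by Lemma~\ref{lem:onembound} it suffices to produce one exponent $N$ and a threshold $\bar h$ with $\|Q_h^N|_{\mathcal{U}_h^0}\|<1$ for all $h<\bar h$, the intermediate terms being already controlled by $W'$; the stability theorem provides such an $N$.

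The main obstacle is precisely this transfer of the spectral gap, and it is subtler than it looks. A direct telescoping $Q_h^N - L_0^N = \sum_{j=0}^{N-1} Q_h^{N-1-j}(Q_h - L_0)L_0^j$ does \emph{not} work, because for $v\in\mathcal{U}_h^0$ the inequality \eqref{NS} only gives $\norm{v}_s\leq M h^{-\alpha}\norm{v}$, so the $O(h)$ gain of the perturbation is cancelled by the blow-up of the strong norm (for the schemes here $\alpha=1$, and the $j=0$ term is genuinely of order one). The virtue of the Keller--Liverani machinery is that it estimates resolvents $(z-Q_h)^{-1}$ rather than fixed powers, using the uniform Lasota--Yorke inequality to control the strong norm of the \emph{iterates} uniformly away from the leading eigenvalue; this is what circumvents the strong-norm blow-up and delivers the uniform gap. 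Everything else is bookkeeping of the uniform constants $A,B,W,K$ through the estimates already proved in Sections~\ref{sec:2} and~\ref{sec:3}.
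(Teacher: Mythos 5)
Your overall architecture (reduce to summability of the $C_k$, extract uniform Lasota--Yorke and weak bounds from the hypotheses, use Lemma \ref{lemma:discbound} for strong-to-weak closeness, and invoke a spectral-stability result for the family $L_h$) matches the paper's, which applies Theorem \ref{unifcont} --- a uniform-family contraction result playing exactly the role you assign to Keller--Liverani --- with UF1--UF4 verified just as you verify them. The divergence, and the genuine gap, is in what that stability result delivers. You claim a \emph{weak-norm} gap uniform in $h$: constants $\bar C,\bar\lambda$ independent of $h$ with $\norm{Q_h^k|_{\mathcal{U}_h^0}}\leq\bar C\bar\lambda^k$, so that $\sum_k C_k$ stays bounded as $h\to 0$. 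This is not what Keller--Liverani (or Theorem \ref{unifcont}) gives, and it is false in general. The uniform conclusion is a contraction of the \emph{strong} norm on the zero-average space, $\norm{L_h^n f}_s\leq C_1 e^{\lambda_1 n}\norm{f}_s$; to convert this into a bound on the weak operator norm $C_k$ of \eqref{Ckbound} one must pass through \eqref{NS} and pay the factor $M h^{-\alpha}$ --- exactly the blow-up you flag and then assert the resolvent machinery circumvents. It does not: the Keller--Liverani resolvent estimates are in the $\norm{\cdot}_s\to\norm{\cdot}$ operator norm, so they never control $\norm{Q_h^k f}$ by $\norm{f}$ alone for a general $f\in\mathcal{U}_h^0$ whose strong norm is of order $h^{-\alpha}$. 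A concrete counterexample to your claimed uniform gap: for the doubling map with the Ulam discretization, take $f=h^{-1}\chi_{I_i}-h^{-1}\chi_{I_j}$ for adjacent cells; the two images under $Q_h^k$ remain essentially mutually singular until $2^k h\sim 1$, so $\norm{Q_h^k|_{\mathcal{U}_h^0}}_{L^1}=1$ for all $k\leq \log_2(1/h)$, which is incompatible with a bound $\bar C\bar\lambda^k$ having $h$-independent constants. The same example defeats your alternative route via a single $h$-independent exponent $N$ with $\norm{Q_h^N|_{\mathcal{U}_h^0}}<1$.

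The theorem nevertheless survives, because uniform summability is more than is needed. The paper accepts the $h^{-\alpha}$ loss: from the uniform strong contraction and \eqref{NS} one gets $C_n\leq\frac12$ only once $n$ is of order $\log(h^{-1}C_1 M)$, whence by Lemma \ref{lem:onembound} the sum satisfies $\sum_k C_k = O\bigl(W\log(1/h)\bigr)$. Plugged into \eqref{theorem1_estimate} this yields an error of order $h\log(1/h)+\varepsilon\log(1/h)$, which still tends to $0$ as $h$ and $\varepsilon$ go to $0$. So the repair to your argument is to abandon the uniform weak-norm gap, keep your (correct) uniform strong-norm contraction, and observe that a logarithmically growing $\sum_k C_k$ suffices for the conclusion.
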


Before the proof  we need to recall a result which is classical in this setting, and is proved in \cite{Gdisp} in the form we will use.

We say that $L_\delta$ is a uniform family of operators if:

\begin{enumerate}
\item[\textbf{UF1}] (Uniform Lasota Yorke ineq.) There are constants $%
A,B,\lambda _{1}\geq 0$ with $\lambda _{1}<1$ such that $\forall f\in
B_{s},\forall n\geq 1,\forall \delta \in \lbrack 0,1)$ and each operator
satisfies a Lasota Yorke inequality. 
\begin{equation}
||L_{\delta }^{n}f||_{s}\leq A\lambda _{1}^{n}||f||_{s}+B||f||_{w}.
\end{equation}

\item[\textbf{UF2}] Suppose that $L_{\delta }$ approximates $L_{0}$ when $%
\delta $ is small in the following sense: there is $C\in \mathbb{R}$ such
that $\forall g\in B_{s}$:%
\begin{equation}
||(L_{\delta }-L_{0})g||_{w}\leq \delta C||g||_{s}.
\end{equation}

\item[\textbf{UF3}] Suppose that $L_{0}$ has exponential convergence to
equilibrium, with respect to the norms $||~||_{w}$ and $||~||_{s}$.

\item[\textbf{UF4}] (The weak norm is not expanded) There is $M$ such that $%
\forall \delta ,n,g\in B_{s}$ $\ ||L_{\delta }^{n}g||_{w}\leq M||g||_{w}.$
\end{enumerate}

The following result (see \cite{Gdisp}, Proposition 45 for the proof) shows that such a uniform family has a  uniform rate of contraction of the space $\mathcal{U}^0_{s}$ and hence a uniform convergence to equilibrium and spectral gap.

\begin{theorem}[Uniform $\mathcal{U}^0_{s}$ contraction for the uniform family of
operators]
\label{unifcont}Let us consider a one parameter family of operators $
L_{\delta }$, $\delta \in \lbrack 0,1)$. Suppose that they satisfy
UF1,...UF4, then there are $\lambda _{1}<1$ and $A_{2},\delta _{2}\geq 0$
such that for each $\delta \leq \delta _{2}$ and $f\in V_{s}$%
\begin{equation}
||L_{\delta }^{k}f||_{s}\leq A_{2}\lambda _{1}^{k}||f||_{s}.
\label{contract3}
\end{equation}
\end{theorem}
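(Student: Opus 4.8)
The strategy is to reduce the claim to a single fixed-power strong contraction and then bootstrap. Concretely, I would first establish that there exist a fixed power $N$, a threshold $\delta_2>0$ and a factor $\rho\in(0,1)$ such that $||L_\delta^N f||_s\leq \rho\,||f||_s$ for every $f\in\mathcal{U}_s^0$ and every $\delta\leq\delta_2$. Since each $L_\delta$ is a Markov operator it preserves the zero-average condition, so $L_\delta(\mathcal{U}_s^0)\subseteq\mathcal{U}_s^0$ and the contraction can be iterated inside $\mathcal{U}_s^0$. Setting $\tilde C:=\max(1,A+B)$, UF1 (with $||f||_w\leq||f||_s$) gives the uniform bound $||L_\delta^r f||_s\leq\tilde C\,||f||_s$ for all $r\geq 0$. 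Writing an arbitrary exponent as $k=qN+r$ with $0\leq r<N$ and using submultiplicativity of $L_\delta^N$ on $\mathcal{U}_s^0$ together with this bound, one gets $||L_\delta^k f||_s\leq\rho^{\,q}\tilde C\,||f||_s\leq A_2\bar\lambda^{\,k}||f||_s$ with $\bar\lambda=\rho^{1/N}<1$ and $A_2=2\tilde C$, which is the asserted estimate (with $\bar\lambda$ playing the role of $\lambda_1$). Thus everything reduces to the single-step estimate.

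The core of that estimate is a uniform weak-norm convergence lemma. For $f\in\mathcal{U}_s^0$ I would compare $L_\delta^n$ with $L_0^n$ through the telescoping identity
\[
L_\delta^n-L_0^n=\sum_{k=0}^{n-1}L_\delta^{\,n-1-k}\,(L_\delta-L_0)\,L_0^{k}.
\]
Applying UF4 to the leftmost factor, UF2 to $(L_\delta-L_0)$, and the uniform iterate bound $||L_0^k f||_s\leq\tilde C\,||f||_s$ to each summand yields $||(L_\delta^n-L_0^n)f||_w\leq n\,MC\tilde C\,\delta\,||f||_s$. Combining this with the exponential equilibrium estimate $||L_0^n f||_w\leq C e^{\lambda n}||f||_s$ of UF3 gives
\[
||L_\delta^n f||_w\leq\big(C e^{\lambda n}+n\,MC\tilde C\,\delta\big)\,||f||_s .
\]

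To conclude I would decompose the target power as $L_\delta^N=L_\delta^{\,p}L_\delta^{\,m}$ and apply the Lasota--Yorke inequality UF1 to the outer block: for $g=L_\delta^m f\in\mathcal{U}_s^0$,
\[
||L_\delta^N f||_s=||L_\delta^{\,p}g||_s\leq A\lambda_1^{\,p}\,||g||_s+B\,||g||_w\leq A\tilde C\lambda_1^{\,p}\,||f||_s+B\,||L_\delta^m f||_w .
\]
The constants are then fixed in a strict order to avoid circularity: first choose the tolerance $\epsilon_0$ with $2B\epsilon_0\leq 1/4$ and the power $p$ with $A\tilde C\lambda_1^{\,p}\leq 1/4$; next, using UF3, choose $m$ so large that $C e^{\lambda m}\leq\epsilon_0$; finally choose $\delta_2$ so small that $MC\tilde C\,\delta_2\,m\leq\epsilon_0$. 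For $\delta\leq\delta_2$ the weak-norm lemma then gives $||L_\delta^m f||_w\leq 2\epsilon_0\,||f||_s$, whence $||L_\delta^N f||_s\leq\tfrac12\,||f||_s$, i.e. $\rho=1/2$ and $N=m+p$.

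The main obstacle is precisely the shape of the weak estimate above: the comparison term grows linearly as $n\,MC\tilde C\,\delta$, so the bound is usable only for $n$ fixed in advance and $\delta$ then taken small, and it is controlled in the \emph{strong} norm of $f$ rather than the weak one; hence it cannot be iterated as a weak contraction on its own. The Lasota--Yorke inequality is what repairs this, converting the weak smallness of $L_\delta^m f$ into a genuine strong-norm contraction through the outer power $L_\delta^p$, at the cost of the careful ordering of constants. This interplay --- weak convergence at a single fixed scale upgraded to uniform strong contraction via Lasota--Yorke --- is the heart of the argument.
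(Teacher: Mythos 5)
Your proof is correct, and it follows essentially the same route as the source: the paper itself does not prove Theorem \ref{unifcont} but defers to \cite{Gdisp} (Proposition 45), whose argument is precisely the one you give --- use the telescoping identity with UF2/UF4 plus the convergence to equilibrium of $L_0$ (UF3) to make $\|L_\delta^m f\|_w$ small on zero-average functions at a fixed scale $m$ for $\delta$ small, then upgrade this to a strong-norm contraction of a fixed power via the uniform Lasota--Yorke inequality (UF1), and finally iterate on $\mathcal{U}_s^0$ (using that Markov operators preserve zero average) to get the exponential bound with $A_2=2\tilde C$ and $\bar\lambda=\rho^{1/N}$. Your ordering of the constant choices ($\epsilon_0$, then $p$, then $m$, then $\delta_2$) correctly avoids circularity, so nothing is missing.
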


\begin{proof}[Proof of Theorem \ref{TAW}]
First we see that we can apply Theorem \ref{unifcont} to our family of operatos $L_h$.
The assumption $UF1$ and $UF4$ are verified due to \eqref{genericLY} and \eqref{eq3}. The assumption $UF2$ is provided by Lemma \ref{lemma:discbound}, while $UF3$ is supposed in the assumptions of Theorem \ref{TAW}.
Applying Theorem \ref{unifcont} we get that uniformly on $h $ there are $\lambda_1<0,\overline{h}_2 \geq 0, C_1\geq 0$ such that for each $h\in [0,\overline{h}_2 ), n\geq 0$, $f\in  \mathcal{U}_s$, 
$$||L_h^n f||_s \leq C_1e^{\lambda _1 n}||f||_s.$$ 
By \eqref{NS} we than have that when $f\in U^0_h$
$$||L_h^n f||_w \leq ||L_h^n f||_s  \leq h^{-1}C_1e^{\lambda _1 n}M_1||f||_w.$$ 
By this we see that a sufficient  condition to get $C^n\leq \frac12$ is $$n\geq\lambda^{-1} log(h^{-1}C_1M_1)$$
by Item 1) of Lemma \ref{lem:onembound} this leads to $\sum_{k=0}^{\infty} C_k \leq 2 W \lambda^{-1} \log(h^{-1}C_1M_1) $ and then by \eqref{theorem1_estimate}:
\begin{equation}
\norm{u-u_h} \leq ( 2 W \lambda^{-1} \log(h^{-1}C_1M_1))(2 K h \left(1+\norm{L}\right)\norm{u}_s + \varepsilon).
\end{equation}
Which can be set as small as wanted when $h$ and $\varepsilon $ are small enough.
\end{proof}

\section{Estimating the convergence to equilibrium with the coarse-fine strategy}\label{sec:coarse-fine}

This section presents the coarse-fine approach, i.e., a method to use bounds $C_k$ as in~\eqref{Ckbound}, estimating the convergence to equilibrium of $Q_h$, to produce analogous bounds $C_k^F$ on the convergence to equilibrium of a finer-resolution approximation $Q_{h_F}$ of $L$, with $h_F < h$. An important ingredient wll be the Lasota-Yorke inequality, which is shared by all sufficiently fine compatible discretizations of $L$ (as proved in Corollary \ref {cor:discLY}).

A statement of this kind will be given in Corollary \ref{finalcor}.  This corollary will be obtained as a consequence of several intermediate steps, obtaining estimates on the norm of $Q_{h}^m  - Q_{h_F}^m$.


The first ingredient is an iterated version of the Lasota-Yorke inequality for a discretized operator (Corollary~\ref{cor:discLY}). The approach is somewhat similar to the one used in \cite{GNS} to rigorously estimate decay of correlation.

\begin{theorem}\label{thm:smallmatrix}
Let $L$ be an operator that satisfies assumption \ref{ass:5}, $P_h$ be a compatible discretization.
Then, for each $k\in\mathbb{N}$ we have the inequality
\[
\begin{bmatrix}
    \norm{Q_h^k f}_s\\
    \trinorm{Q_h^k f}
\end{bmatrix}
\leq
\left(
\begin{bmatrix}
    1 & 0\\
    E h & 1
\end{bmatrix}
\begin{bmatrix}
    A & B\\
    0 & 1
\end{bmatrix}
\right)^k
\begin{bmatrix}
    \norm{f}_s\\
    \trinorm{f}
\end{bmatrix},
\quad f \in \mathcal{U}_h,
\]
where $\leq$ is intended to be componentwise.
\end{theorem}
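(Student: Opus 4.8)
The plan is to prove the $k=1$ case first --- a one-step, two-component Lasota--Yorke inequality that realizes the product of the two $2\times 2$ matrices --- and then promote it to arbitrary $k$ by induction, the crucial point being that both factor matrices are entrywise nonnegative and therefore preserve componentwise inequalities.

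For the base case I would first rewrite $Q_h$ in a form that makes the two matrix factors visible. Since $f\in\mathcal{U}_h$ we have $P_hf=f$, so $L_hf=P_hLf$; moreover $i(Lf)=i(f)$ by Assumption \ref{ass:5}, so
\[
Q_hf = P_h(Lf) + 1\cdot i\big(Lf-P_h(Lf)\big).
\]
This identity lets me read $Q_h$ as ``apply $L$, then project''. Setting $g=Lf$, the right factor $\begin{bmatrix}A&B\\0&1\end{bmatrix}$ encodes the action of $L$: its first row is the Lasota--Yorke inequality \eqref{genericLY}, $\norm{g}_s\le A\norm{f}_s+B\trinorm{f}$, and its second row is the $L^1$-nonexpansion \eqref{eq1}, which (as $\trinorm{\cdot}$ is the $L^1$ norm) reads $\trinorm{g}=\trinorm{Lf}\le\trinorm{f}$. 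The left factor $\begin{bmatrix}1&0\\Eh&1\end{bmatrix}$ encodes the projection step applied to $g$: its first row is $\norm{Q_hf}_s\le\norm{g}_s$, which follows from \eqref{Phprop:freenormalization} combined with property \ref{Phprop:strong_norm_contraction} of the compatible discretization (so that $\norm{Q_hf}_s\le\norm{L_hf}_s=\norm{P_hg}_s\le\norm{g}_s$), while its second row is $\trinorm{Q_hf}\le Eh\norm{g}_s+\trinorm{g}$, which is exactly property \ref{Phprop:trinormapprox} applied to $g$. Chaining these two vector inequalities yields the statement for $k=1$.

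I would then iterate. Because $Q_h$ maps $\mathcal{U}_h$ into itself, $Q_h^{k-1}f\in\mathcal{U}_h$, so the one-step bound applies to $Q_h^{k-1}f$ in place of $f$, giving
\[
\begin{bmatrix}\norm{Q_h^kf}_s\\\trinorm{Q_h^kf}\end{bmatrix}\le M\begin{bmatrix}\norm{Q_h^{k-1}f}_s\\\trinorm{Q_h^{k-1}f}\end{bmatrix},\qquad M:=\begin{bmatrix}1&0\\Eh&1\end{bmatrix}\begin{bmatrix}A&B\\0&1\end{bmatrix}.
\]
The key observation is that $M$ has nonnegative entries (all of $A,B,E,h$ being nonnegative), so left-multiplication by $M$ is monotone for the componentwise order: $x\le y$ implies $Mx\le My$. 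Applying this to the inductive hypothesis $\begin{bmatrix}\norm{Q_h^{k-1}f}_s\\\trinorm{Q_h^{k-1}f}\end{bmatrix}\le M^{k-1}\begin{bmatrix}\norm{f}_s\\\trinorm{f}\end{bmatrix}$ closes the induction and produces $M^k$ acting on the initial vector.

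The main obstacle is really the weak-norm (second) row of the base case: one must recognize that the $i$-correction term appearing in $Q_h$ is precisely what property \ref{Phprop:trinormapprox} is built to control, and that $\trinorm{\cdot}$ is the $L^1$ norm, so that \eqref{eq1} supplies the needed contraction $\trinorm{Lf}\le\trinorm{f}$. Everything else is bookkeeping; it is the nonnegativity of $M$ that legitimizes passing from a scalar-style one-step estimate to a clean $k$-th power of a fixed $2\times 2$ matrix, which is the whole point of the statement.
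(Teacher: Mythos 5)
Your proof is correct and follows essentially the same route as the paper's: the same decomposition $Q_hf=P_h(Lf)+1\cdot i(Lf-P_h(Lf))$, the same use of properties \ref{Phprop:strong_norm_contraction} and \ref{Phprop:trinormapprox} together with \eqref{genericLY} and \eqref{eq1} to obtain the one-step matrix inequality, and induction for general $k$. You are merely more explicit than the paper about the nonnegativity of the matrix factors being what legitimizes the induction, which is a welcome clarification rather than a deviation.
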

\begin{proof}
Note that $P_h f = f$ since $f\in\mathcal{U}_h$. We have
\[
\trinorm{Q_h f} \leq \trinorm{Lf} + E h \norm{Lf}_s \leq \trinorm{f} + E h \norm{Lf}_s.
\]
Hence
\begin{equation*}
\begin{bmatrix}
    \norm{Q_h f}_s\\
    \trinorm{Q_h f}
\end{bmatrix}
 \leq
\begin{bmatrix}
    1 & 0\\
    E h & 1
\end{bmatrix}
\begin{bmatrix}
    \norm{L f}_s\\
    \trinorm{f}
\end{bmatrix}
\leq
\begin{bmatrix}
    1 & 0\\
    E h & 1
\end{bmatrix}
\begin{bmatrix}
    A & B\\
    0 & 1
\end{bmatrix}
\begin{bmatrix}
    \norm{f}_s\\
    \trinorm{f}
\end{bmatrix}.
\end{equation*}
The rest follows by induction.
\end{proof}

\begin{corollary} \label{cor:Rkh}
Let $M$ be as in  \eqref{NS}.
Then,
\begin{equation}
	\norm{Q_h^k f}_s \leq R_{k,h,1} \norm{f}, \quad \trinorm{Q_h^k f} \leq R_{k,h,2} \norm{f}, \quad f \in \mathcal{U}_h,
\end{equation}
where
\begin{equation} \label{Rkhi}
\begin{bmatrix}
    R_{k,h,1}\\
    R_{k,h,2}
\end{bmatrix}
 := \left(
\begin{bmatrix}
    1 & 0\\
    E h & 1
\end{bmatrix}
\begin{bmatrix}
    A & B\\
    0 & 1
\end{bmatrix}
\right)^k \begin{bmatrix}
    \frac{1}{h^{\alpha}} M \\
    1
\end{bmatrix}.
\end{equation}
\end{corollary}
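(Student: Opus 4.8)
The plan is to read off the statement of Corollary \ref{cor:Rkh} directly from Theorem \ref{thm:smallmatrix} by specializing the vector on which the matrix acts. The whole point is that Theorem \ref{thm:smallmatrix} gives us a componentwise bound on the pair $(\norm{Q_h^k f}_s, \trinorm{Q_h^k f})$ in terms of the pair $(\norm{f}_s, \trinorm{f})$, and now we want to re-express the right-hand side purely in terms of the single weak norm $\norm{f}$. First I would observe that for $f \in \mathcal{U}_h$ the inverse-inequality \eqref{NS} applies, giving $\norm{f}_s \leq \frac{1}{h^\alpha} M \norm{f}$. Second, I would use the elementary bound $\trinorm{f} \leq \norm{f}$, which follows from Assumption \ref{assumption:space} item 4 (that is, $\norm{.}_{L^1} \leq \norm{.}$) together with the definition of the triple norm as an $L^1$-flavored weak norm; in any case the weak norm dominates the $\trinorm{\cdot}$ quantity, so the second component is bounded by $\norm{f}$.

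With these two observations in hand, the core step is simply to note that the matrix
\[
\begin{bmatrix} 1 & 0 \\ E h & 1 \end{bmatrix}\begin{bmatrix} A & B \\ 0 & 1\end{bmatrix}
\]
has nonnegative entries, hence its $k$-th power is also entrywise nonnegative, and therefore multiplication by it preserves the componentwise order $\leq$ on vectors in $\mathbb{R}^2$. Thus, starting from the componentwise inequality
\[
\begin{bmatrix} \norm{f}_s \\ \trinorm{f}\end{bmatrix} \leq \begin{bmatrix} \frac{1}{h^\alpha} M \\ 1 \end{bmatrix}\norm{f},
\]
I would apply the nonnegative matrix $\left(\begin{bmatrix} 1 & 0 \\ Eh & 1\end{bmatrix}\begin{bmatrix} A & B \\ 0 & 1\end{bmatrix}\right)^k$ to both sides, preserving the inequality, and chain this with the bound from Theorem \ref{thm:smallmatrix}. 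This yields
\[
\begin{bmatrix} \norm{Q_h^k f}_s \\ \trinorm{Q_h^k f}\end{bmatrix} \leq \left(\begin{bmatrix} 1 & 0 \\ Eh & 1\end{bmatrix}\begin{bmatrix} A & B \\ 0 & 1\end{bmatrix}\right)^k \begin{bmatrix} \frac{1}{h^\alpha} M \\ 1 \end{bmatrix}\norm{f},
\]
and reading off the two components gives exactly $\norm{Q_h^k f}_s \leq R_{k,h,1}\norm{f}$ and $\trinorm{Q_h^k f} \leq R_{k,h,2}\norm{f}$ with $R_{k,h,i}$ as defined in \eqref{Rkhi}.

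There is no real obstacle here; the only point requiring a moment of care is the monotonicity argument, namely justifying that applying a fixed entrywise-nonnegative matrix preserves the componentwise partial order, and that this is why we may substitute the coarser bound $(\frac{1}{h^\alpha}M, 1)^T$ for the true pair $(\norm{f}_s, \trinorm{f})^T$ before propagating through the $k$ iterations rather than after. Once that is noted, the corollary is an immediate corollary of the theorem in the most literal sense, and I would keep the proof to a couple of lines.
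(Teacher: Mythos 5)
Your proof is correct and is precisely the argument the paper intends (the corollary is stated without proof there, as an immediate consequence of Theorem \ref{thm:smallmatrix}): you combine the componentwise bound of that theorem with $\norm{f}_s \leq \frac{1}{h^\alpha}M\norm{f}$ from \eqref{NS} and $\trinorm{f}\leq\norm{f}$ from Assumption \ref{assumption:space}, and correctly justify the substitution via the entrywise nonnegativity of the iteration matrix. Nothing is missing.
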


\begin{corollary}
Let $S_1,S_2$ be constants such that $\norm{f} \leq S_1 \norm{f}_s + S_2 \trinorm{f}$. Then,
\begin{equation} \label{RSbound}
	\norm{Q_{h}^k} \leq S_1 R_{k,h,1} + S_2 R_{k,h,2}.
\end{equation}
\end{corollary}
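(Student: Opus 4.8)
The plan is to reduce the operator-norm bound to a single per-vector estimate and then pass to a supremum. The only substantive content is the assembly of two facts already in hand: the norm-comparison inequality $\norm{f} \leq S_1\norm{f}_s + S_2\trinorm{f}$ supplied in the hypothesis, and the componentwise bounds of Corollary~\ref{cor:Rkh}.

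First I would fix an arbitrary $f \in \mathcal{U}_h$ and apply the comparison inequality not to $f$ itself but to the image vector $Q_h^k f$, which again lies in $\mathcal{U}_h$ since $Q_h$ maps $\mathcal{U}_h$ into itself. This gives
\[
\norm{Q_h^k f} \leq S_1 \norm{Q_h^k f}_s + S_2 \trinorm{Q_h^k f}.
\]
Next I would substitute the two estimates $\norm{Q_h^k f}_s \leq R_{k,h,1}\norm{f}$ and $\trinorm{Q_h^k f} \leq R_{k,h,2}\norm{f}$ furnished by Corollary~\ref{cor:Rkh}, which are valid precisely because $f\in\mathcal{U}_h$ (so $P_h f = f$ and Theorem~\ref{thm:smallmatrix} applies). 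Combining the two yields the per-vector inequality
\[
\norm{Q_h^k f} \leq \left(S_1 R_{k,h,1} + S_2 R_{k,h,2}\right)\norm{f}.
\]
Finally I would divide by $\norm{f}$ for $f\neq 0$ and take the supremum over $f \in \mathcal{U}_h\setminus\{0\}$; since $R_{k,h,1}$, $R_{k,h,2}$, $S_1$, $S_2$ do not depend on $f$, this produces exactly the operator-norm bound \eqref{RSbound}.

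I expect essentially no obstacle: the statement is a bookkeeping consequence of the previous two corollaries. The one point requiring care is the domain on which $\norm{Q_h^k}$ is measured — the estimates of Corollary~\ref{cor:Rkh} hold only for $f\in\mathcal{U}_h$, so $\norm{Q_h^k}$ must be read as the operator norm of $Q_h^k$ acting on the finite-dimensional space $\mathcal{U}_h$ with respect to the weak norm $\norm{\cdot}$. With that reading the supremum step is immediate, and no compactness or limiting argument is needed.
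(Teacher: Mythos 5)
Your proof is correct and is exactly the (omitted) argument the paper intends: apply the hypothesis to $Q_h^k f$, insert the two bounds of Corollary~\ref{cor:Rkh}, and take the supremum over $f\in\mathcal{U}_h\setminus\{0\}$. Your remark that $\norm{Q_h^k}$ must be read as the operator norm on the finite-dimensional space $\mathcal{U}_h$ with respect to the weak norm is the right reading and matches the paper's usage.
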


\begin{remark}
If $E=0$, as in the case of the Ulam projection (Section \ref{sec:Ulam}), these bounds reduce to
\begin{align*}
\Var((Q_h^U)^k f) &\leq A^k \Var(f) + (1+A+A^2+\dots+A^{k-1})B \norm{f}_{L^1}\\ &\leq A^k \Var(f) + \frac{B}{1-A} \norm{f}_{L^1},
\end{align*}
which is a classical iterated form of the Lasota-Yorke inequality~\cite{GaNi, KeLi}.
\end{remark}
For a general projection, instead, $E\neq 0$ and the matrix
\[
\mathcal{A}_h =
\begin{bmatrix}
    1 & 0\\
    E h & 1
\end{bmatrix}
\begin{bmatrix}
    A & B\\
    0 & 1
\end{bmatrix}
\]
has an eigenvalue strictly larger than $1$, hence $R_{k,h,1}$ and $R_{k,h,2}$ diverge and $\norm{Q_h^k}$ is not bounded uniformly in $k$.
Nevertheless, $\mathcal{A}_h$ is an $O(h)$ perturbation of the power-bounded matrix
$\mathcal{A}_0 = \begin{bsmallmatrix}
    A & B\\
    0 & 1
\end{bsmallmatrix}$, so these estimates can be shown to be useful when $k \ll 1/h$.

We can now prove a result that shows that discretizations of the same operator with different grid sizes are `close' (in a suitable sense). Let us consider two discretizations of the same Perron operator $L$, with $n$ and $n_F$ elements respectively (and grid sizes $h=1/n$, $h_F=1/n_F$) respectively. Note that if $n_F$ is a multiple of $n$, then for both $P_h^U$ and $P_h^L$ the finer grid is a refinement of the coarse grid, and $P_{h}P_{h_F} = P_{h_F}P_h = P_{h}$.

\begin{theorem}\label{thm:difference}
Let $Q_{h},Q_{h_F}$ be two ($i$-preserving) discretizations of the same Perron operator $L$, obtained with projections such that  $P_{h}P_{h_F} = P_{h_F}P_h = P_{h}$. Then, for each $f \in \mathcal{U}_{h_F}^0$ we have
\[
\norm{(Q_{h}^m  - Q_{h_F}^m)f} \leq 2 K h \sum_{k=0}^{m-1} C_{m-1-k}  \left(\norm{Q_{h_F}} \norm{Q_{h_F}^k f}_s + \norm{Q_{h_F}^{k+1}f}_s \right).
\]
where $\norm{Q^k_h|_{\mathcal{U}_h^0}}\leq C_k$.
\end{theorem}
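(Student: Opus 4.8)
The plan is to combine a telescoping identity with a single-step comparison of the two discretizations, and then propagate the a-posteriori constants $C_k$ through the telescoping. First I would write the standard identity
\[
Q_h^m - Q_{h_F}^m = \sum_{k=0}^{m-1} Q_h^{m-1-k}(Q_h - Q_{h_F})Q_{h_F}^k,
\]
which follows by expanding the right-hand side and cancelling consecutive terms, apply it to $f$, and bound each summand by the triangle inequality. Since $f\in\mathcal{U}_{h_F}^0$ and $Q_{h_F}$ is $i$-preserving, every iterate $Q_{h_F}^k f$ stays in $\mathcal{U}_{h_F}^0$; this is what lets the inner factors $\norm{Q_{h_F}^k f}_s$ and $\norm{Q_{h_F}^{k+1}f}_s$ in the statement be read as strong norms of genuine elements of the fine space, once I set $g=Q_{h_F}^k f$.

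The crux is the single-step estimate
\[
\norm{(Q_h - Q_{h_F})g} \le 2Kh\bigl(\norm{Q_{h_F}}\norm{g}_s + \norm{Q_{h_F}g}_s\bigr), \qquad g\in\mathcal{U}_{h_F},
\]
and the step I expect to be the main obstacle is getting $\norm{Q_{h_F}}$ (rather than $\norm{L}$) on the right. The key observation is that the refinement hypothesis $P_h P_{h_F}=P_{h_F}P_h=P_h$ forces $L_h = P_h L P_h = P_h(P_{h_F}LP_{h_F})P_h = P_h L_{h_F}P_h$; moreover a short computation shows that the rank-one $i$-correction terms telescope so that the $i$-preserving discretization at scale $h$ of the operator $Q_{h_F}$ reproduces $Q_h$ exactly (independently of whether $P_h$ itself preserves $i$). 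Once $Q_h$ is identified as the scale-$h$ discretization of the $i$-preserving operator $Q_{h_F}$, the displayed inequality is precisely Lemma \ref{lemma:discbound} applied with $Q_{h_F}$ in the role of the base operator $L$, using $\norm{L_{h_F}g}_s = \norm{Q_{h_F}g}_s$ from \eqref{Phprop:freenormalization}.

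Finally I would insert the constants $C_k$. Each summand is $Q_h^{m-1-k}$ applied to $w_k := (Q_h-Q_{h_F})Q_{h_F}^k f$, and from the single-step bound $\norm{w_k}\le 2Kh(\norm{Q_{h_F}}\norm{Q_{h_F}^k f}_s + \norm{Q_{h_F}^{k+1}f}_s)$. Because both discretizations preserve $i$, $w_k$ has zero $i$-average; exploiting that the projections are integral-preserving (as in both concrete schemes of this paper) one has $P_h w_k\in\mathcal{U}_h^0$ with $Q_h w_k = Q_h P_h w_k$, which reduces $Q_h^{m-1-k}w_k$ to $Q_h^{m-1-k}$ acting on $P_h w_k\in\mathcal{U}_h^0$, so that $\norm{Q_h^{m-1-k}w_k}\le C_{m-1-k}\norm{P_h w_k}\le C_{m-1-k}\norm{w_k}$ by \eqref{Ckbound} and $\norm{P_h}\le 1$. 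Summing over $k$ yields the claimed estimate.

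The delicate bookkeeping throughout is keeping track of which zero-average subspace each intermediate vector lives in: the constants $C_k$ control $Q_h$ only on the coarse space $\mathcal{U}_h^0$, whereas the differences $w_k$ naturally sit in the larger fine space $\mathcal{U}_{h_F}^0$, and it is the interplay of $i$-preservation with the relation $P_hP_{h_F}=P_{h_F}P_h=P_h$ that bridges the two so that the coarse a-posteriori bounds may legitimately be applied.
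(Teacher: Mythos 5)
Your proof is correct and follows essentially the same route as the paper's: the telescoping identity combined with Lemma \ref{lemma:discbound} applied with $Q_{h_F}$ in place of $L$, which is exactly the paper's ``key insight'' that $Q_h$ may be regarded as a further discretization of $Q_{h_F}$ thanks to the refinement relation $P_hP_{h_F}=P_{h_F}P_h=P_h$. One small inaccuracy: your parenthetical claim that both concrete projections are integral-preserving is false for the piecewise linear scheme (the paper states explicitly that $P_h^L$ is not $i$-preserving), but the bookkeeping you are trying to justify there --- applying the coarse bounds $C_{m-1-k}$ to the vectors $w_k\in\mathcal{U}_{h_F}^0$ rather than to elements of $\mathcal{U}_h^0$ --- is passed over in silence in the paper's own proof, so your treatment is if anything more careful than the original.
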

\begin{proof}
The key insight is noticing that $L_{h_F} = P_{h_F} Q_{h} P_{h_F}$, so we can regard $Q_{h}$
as a further discretization of the operator $Q_{h_F}$, rather than a discretization of $L$.
In particular, we can apply Lemma~\ref{lemma:discbound} with $Q_{h_F}$ in place of $L$.
The rest follows once again from a telescopic sum argument.
\begin{align*}
\norm{(Q_{h}^m - Q_{h_F}^m)f} &\leq \sum_{k=0}^{m-1} \norm{Q_h^{m-1-k} (Q_h-Q_{h_F}) Q_{h_F}^{k}f}\\
& \leq \sum_{k=0}^{m-1} C_{m-1-k} 2 K h \left(\norm{Q_{h_F}} \norm{Q_{h_F}^k f}_s + \norm{Q_{h_F}^{k+1}f}_s \right).
\end{align*}
\end{proof}
\begin{corollary}\label{finalcor}
We have
\begin{equation} \label{finebounds}
\norm{Q_{h_F}^m|_{\mathcal{U}_{h_F}^0}} \leq C_m + 2K h \sum_{k=0}^{m-1} C_{m-1-k} (\norm{Q_{h_F}} R_{k,h_F,1} + R_{k+1,h_F,1}).
\end{equation}
\end{corollary}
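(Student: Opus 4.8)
The plan is to derive \eqref{finebounds} directly by combining Theorem~\ref{thm:difference} with the iterated Lasota--Yorke bounds of Corollary~\ref{cor:Rkh}, using a single triangle inequality to split the quantity to be bounded into a ``coarse'' part controlled by the $C_k$ and a ``correction'' part controlled by the difference estimate. Concretely, I would fix $f \in \mathcal{U}_{h_F}^0$ and write
\[
\norm{Q_{h_F}^m f} \leq \norm{Q_{h}^m f} + \norm{(Q_{h}^m - Q_{h_F}^m) f},
\]
so that dividing by $\norm{f}$ and taking the supremum over $f$ produces the two summands on the right-hand side of \eqref{finebounds}.

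For the first term I would argue that $\norm{Q_h^m f} \leq C_m \norm{f}$. The point to check carefully is that the constant $C_m$, which by definition bounds $\norm{Q_h^m|_{\mathcal{U}_h^0}}$, is legitimately applicable here even though $f$ lives in the larger space $\mathcal{U}_{h_F}^0$. This is where I would use the refinement hypothesis $P_{h}P_{h_F}=P_{h_F}P_h=P_h$: it gives $\mathcal{U}_h \subseteq \mathcal{U}_{h_F}$, and since $Q_h$ has its range in $\mathcal{U}_h$ and preserves $i$, it maps $\mathcal{U}_{h_F}^0$ into $\mathcal{U}_h^0$, so that the iterates $Q_h^m f$ remain in the space on which the $C_k$ are defined.

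For the second term I would invoke Theorem~\ref{thm:difference}, which gives
\[
\norm{(Q_{h}^m - Q_{h_F}^m)f} \leq 2Kh \sum_{k=0}^{m-1} C_{m-1-k}\left(\norm{Q_{h_F}}\norm{Q_{h_F}^k f}_s + \norm{Q_{h_F}^{k+1}f}_s\right),
\]
and then replace the strong norms of the iterates by their bounds from Corollary~\ref{cor:Rkh}. Since $f\in\mathcal{U}_{h_F}$, that corollary applies with grid size $h_F$ and yields $\norm{Q_{h_F}^k f}_s \leq R_{k,h_F,1}\norm{f}$ together with $\norm{Q_{h_F}^{k+1} f}_s \leq R_{k+1,h_F,1}\norm{f}$. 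Substituting these, dividing by $\norm{f}$ and taking the supremum gives exactly \eqref{finebounds}.

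Essentially no deep obstacle remains once this bookkeeping is carried out; the only delicate points are the index/space matching just described---making sure the $C_k$ in both the coarse term and the difference estimate are read on the correct zero-average space via the mapping property $Q_h(\mathcal{U}_{h_F}^0)\subseteq\mathcal{U}_h^0$---and the fact that one must use $R_{\cdot,h_F,1}$ rather than $R_{\cdot,h,1}$, since the surviving iterates are those of $Q_{h_F}$ applied to an element of $\mathcal{U}_{h_F}$, while the prefactor $2Kh$ carries the \emph{coarse} grid size inherited from regarding $Q_h$ as a discretization of $Q_{h_F}$.
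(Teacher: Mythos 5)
Your proposal is correct and follows essentially the same route as the paper's own proof: the triangle inequality $\norm{Q_{h_F}^m f} \leq \norm{Q_h^m f} + \norm{(Q_h^m - Q_{h_F}^m)f}$, the bound $C_m\norm{f}$ for the first term, Theorem~\ref{thm:difference} for the second, and Corollary~\ref{cor:Rkh} with grid size $h_F$ to replace the strong norms by $R_{k,h_F,1}\norm{f}$. Your extra care about why the coarse-grid constants $C_k$ apply to vectors of $\mathcal{U}_{h_F}^0$ (via $Q_h(\mathcal{U}_{h_F}^0)\subseteq\mathcal{U}_h^0$) addresses a point the paper passes over silently, and the bookkeeping of which grid size appears where ($2Kh$ coarse, $R_{\cdot,h_F,1}$ fine) matches the paper exactly.
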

\begin{proof}
From Theorem \ref{thm:difference} we have that for all $f\in \mathcal{U}^0_{h_F}$
\begin{align*}
\norm{Q_{h_F}^m f}&\leq \norm{Q_{h}^m f}+\norm{(Q_{h}^m - Q_{h_F}^m)f}\\
&\leq C_m \norm{f}+\sum_{k=0}^{m-1} C_{m-1-k} 2 K h \left(\norm{Q_{h_F}} \norm{Q_{h_F}^k f}_s + \norm{Q_{h_F}^{k+1}f}_s\right)
\end{align*}
Observe that by Corollary \ref{cor:Rkh} we have that 
\[
\norm{Q_{h_F}^k f}_s \leq R_{k,h_F,1} \norm{f}, 
\]
therefore
\[
\norm{Q_{h_F}^m f} \leq C_m \norm{f}+\sum_{k=0}^{m-1} C_{m-1-k} 2 K h \left(\norm{Q_{h_F}} R_{k,h_F,1} + R_{k+1,h_F,1}\right)\norm{f}.
\]

\end{proof}

This estimate requires only the explicit computation of $\norm{Q_{h_F}}$ and of the norms $C_k$ computed on a matrix of size $n < n_F$. Hence its computational cost is $O(n^2m + n_F)$, which can be much smaller than $O(n_F^2 m)$.




\begin{remark}
When used alone, this process to derive coefficients $C_m^F$ on a finer grid from coefficients $C_m$ on a coarser grid never gives a practical advantage when used in~\eqref{theorem1_estimate}. Indeed, ignoring some moderate factors and summands, we are replacing the estimate
\[
\norm{u-u_h} \sim h \sum_{k=0}^{\infty} C_k
\]
from Theorem~\ref{th:fixed_point} with
\begin{align*}
\norm{u-u_{h_F}} &\sim h_F \sum_{m=0}^{\infty} C_m^F \sim h_F \sum_{m=0}^{\infty} h \sum_{k=0}^{m-1} C_{m-1-k} \frac{1}{h_F}A^{k}\\
&=  h \sum_{m=0}^{\infty} \sum_{j=0}^{m-1} C_{j} A^{m-1-j} = h \sum_{j=0}^{\infty} \sum_{m = j+1}^\infty C_j A^{m-1-j} \\
&= \frac{h}{1-A} \sum_{j=0}^{\infty} C_j,
\end{align*}
from Theorem~\ref{thm:difference}; and this estimate is worse by a factor $\frac{1}{1-A}$. This rough computation suggests that one is always better off using the bound in Theorem~\ref{th:fixed_point} on $Q_h$ directly, forgoing $Q_{h_F}$ entirely.

However, another key ingredient is that we have other sources of \emph{a priori} bounds on $C_k^F$ which are more effective for small $k$ and improve this estimate significantly. These different bounds are described in detail in Section~\ref{sec:aggregating}.
\end{remark}


\section{Applying the general strategy to the transfer operators of nonsingular maps} \label{sec:transferoperator}
The main application of the  abstract approximation scheme we present is the approximation
of invariant densities for expanding and piecewise expanding dynamical systems on the unit interval $[0,1]$.

Let $T$ be a measurable map $T:[0,1] \to [0,1]$, we say $T$ is \textbf{nonsingular} if $m(T^{-1}(A))=0$ if and only if $m(A)$ is equal to $0$ for all measurable subsets $A$. 

Given a measurable map, the action of the dynamical
system extends to the space of probability measures through the push-forward operator associated to the map $T$, usually denoted as  $T^*$,
which associates to a probability measure $\mu$ the unique measure $T^*\mu$ such that
$(T^*\mu)(A) = \mu(T^{-1}A)$ for all measurable set $A$. 
If $T$ is nonsingular, the space of absolutely continuous measures is preserved by 
$T^*$; this induces an operator $L:L^1[0,1]\to L^1[0,1]$ on the space of densities,  called the \textbf{Perron-Frobenius operator} associated to the dynamical system.  It is well known that $L$ in this case is a weak contraction in $L^1$; for each $f\in L^1[0,1]$,  
\[\norm{Lf}_{L^1}\leq \norm{f}_{L^1}.\]

In the case where the map is piecewise expanding we have that the associated Perron-Frobenius operator satisfies a Lasota Yorke inequality. The following is a classical result,  see \cite{LY} or  \cite{GaNi}[Theorem 5.2] for a proof.

\begin{lemma}[$\Var-L^1$ Lasota-Yorke inequality]\label{thm:VarLY}
Let $T:[0,1]\to [0,1]$ and suppose there exists a finite partition
$\{P_k\}_{k=1}^b$ of $[0,1]$ such that
\begin{enumerate}
 \item $T_k = T|_{P_k}$ is $C^2$,
 \item $|T'(x)|>2$ for all $x\in [0,1]$
 \item the distortion $|T''(x)/T'(x)^2|$ is uniformly bounded by a constant $D$,
\end{enumerate}
then \eqref{genericLY} is satisfied with
\begin{equation}\label{eq:LY-Var}
A=\sup_x\left|\frac{2}{T'(x)}\right| \quad  B=\sup_k \frac{2}{|P_k|}+D,
\end{equation}

Mantaining hypothesis (1) and (3), relaxing hypothesis (2) to $|T'(x)|>1$ for all $x\in [0,1]$ and with
the addition that for all $k$ $f(P_k)=[0,1]$,
then \eqref{genericLY} is satisfied with
\begin{equation}\label{eq:LY-fullbranch}
A=\sup_x\left|\frac{1}{T'(x)}\right| \quad  B=D.
\end{equation}
\end{lemma}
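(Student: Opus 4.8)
The plan is to prove the sharper, distortion-aware form $\Var(Lf)\le A\Var(f)+B\norm{f}_{L^1}$, which implies \eqref{genericLY} since $\norm{f}_{L^1}\le\trinorm{f}$ in this setting, working branch by branch on the explicit expression of the Perron--Frobenius operator. Writing $g_k:=1/\abs{T_k'}$ for the Jacobian of the inverse branch $T_k^{-1}$, we have $Lf=\sum_k \big((f g_k)\circ T_k^{-1}\big)\mathbf 1_{J_k}$ with $J_k:=T(P_k)$. Rather than differentiating this directly, I would estimate the variation by duality, using $\Var(Lf)=\sup\{\int Lf\, \phi'\de m: \phi\in C^1,\ \norm{\phi}_\infty\le 1\}$ together with the transfer identity $\int Lf\,\phi'=\int f\,(\phi'\circ T)$ and the branchwise chain rule $\phi'\circ T=(\phi\circ T)'/T'$. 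This reduces everything to estimating, on each $P_k$, the integral $\int_{P_k} f g_k\,(\phi\circ T_k)'\de m$ with $\norm{\phi\circ T_k}_\infty\le 1$.

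The key step, and the one that keeps the distortion out of the coefficient $A$, is to integrate by parts on each $P_k$ and then expand the resulting Stieltjes differential by the Leibniz rule $\de(f g_k)=g_k\,\de f+f\,\de g_k$. Since $T_k$ is $C^2$, the function $g_k$ is $C^1$ and $\de g_k=g_k'\,\de m$ carries no atoms, so the product rule is unambiguous. The term carrying $\de f$ contributes at most $\sup_{P_k} g_k\cdot\Var_{P_k}(f)$, while the term carrying $\de g_k$ is a pure $L^1$ term: $\int_{P_k}\abs{f}\,\abs{g_k'}\de m=\int_{P_k}\abs{f}\,\abs{T''/(T')^2}\de m\le D\int_{P_k}\abs f\de m$ by hypothesis~(3). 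This is exactly the mechanism routing the distortion into $B$ rather than into $A$.

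It then remains to control the boundary terms produced by the integration by parts at the endpoints $a_k,b_k$ of $P_k$ (equivalently, the jumps of each summand at $\partial J_k$). Here I would use the refined two-endpoint estimate $\abs{f(a_k)}+\abs{f(b_k)}\le 2\inf_{P_k}\abs f+\Var_{P_k}(f)\le \tfrac{2}{\abs{P_k}}\int_{P_k}\abs f\de m+\Var_{P_k}(f)$, obtained by comparing both endpoint values to the value at an interior minimum, so that the variation of $f$ on $P_k$ is charged only once. Combined with $\norm{\phi\circ T_k}_\infty\le1$ and $\sup_{P_k}g_k=\sup_{P_k}\abs{1/T'}$, summing over the finitely many branches yields $\Var(Lf)\le 2\sup_x\abs{1/T'(x)}\,\Var(f)+\big(\sup_k \tfrac{2}{\abs{P_k}}+D\big)\norm{f}_{L^1}$, which is the first set of constants. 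The main obstacle throughout is precisely this constant bookkeeping: a naive estimate charges $\Var_{P_k}(f)$ three times and leaves a distortion term inside $A$, whereas obtaining the factor $2$ in $A$ and isolating $D$ in $B$ requires both the Leibniz splitting and the single-charge endpoint bound.

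For the full-branch variant I would exploit that $T(P_k)=[0,1]$ for every $k$, so each summand $(f g_k)\circ T_k^{-1}$ is defined on all of $[0,1]$ and $Lf=\sum_k (f g_k)\circ T_k^{-1}$ carries no indicator functions. Consequently there are no endpoint jumps and hence no boundary terms at all, so one may bound directly $\Var(Lf)\le\sum_k\Var_{P_k}(f g_k)\le\sum_k\big(\sup_{P_k}g_k\,\Var_{P_k}(f)+D\int_{P_k}\abs f\de m\big)$ via the same Leibniz product rule. Summing gives $\Var(Lf)\le \sup_x\abs{1/T'(x)}\,\Var(f)+D\norm{f}_{L^1}$, i.e. $A=\sup\abs{1/T'}$ and $B=D$; note that the disappearance of the boundary terms is exactly what allows the expansion hypothesis to be relaxed to $\abs{T'}>1$, since under that weaker assumption one already has $A<1$.
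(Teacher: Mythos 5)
Your proof is correct and is essentially the classical Lasota--Yorke argument: branchwise decomposition of $Lf$, the product rule splitting $\de(fg_k)=g_k\,\de f+f g_k'\,\de m$ to route the distortion into the $L^1$ coefficient, and the two-endpoint bound $\abs{f(a_k)}+\abs{f(b_k)}\le \tfrac{2}{\abs{P_k}}\int_{P_k}\abs{f}\de m+\Var_{P_k}(f)$ to control the boundary terms (which disappear in the full-branch case). The paper itself gives no proof of this lemma, deferring to the cited references, and your argument reproduces exactly the proof found there with the stated constants.
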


In this context it is also well known (see e.g. \cite{V}) that the transfer operator associated to a piecewise expanding map $T$, provided that $T$ is  topologically mixing has a unique invariant probability density having bounded variation.

\subsection{Recalling the needed constants}
In the following we will use the basic facts recalled above for the approximation of invariant densities of examples of piecewise expanding maps. We will do this following our general strategy, for different discretizations and using different spaces. 
We recall that to apply our approximation strategy  we have to provide the following bounds:
\begin{itemize}
\item the coefficients $A,B$ of a Lasota-Yorke inequality
 \begin{equation*}
	\norm{Lf}_s \leq A \norm{f}_s + B\trinorm{f},  \quad A < 1,
\end{equation*}
\item the constant of the discretization error $K$,
\item the ``$i$-injection'' constant $E$,
\item the discretized ``strong-weak'' constant $M$,
\item the ``weak-strong+auxiliary'' constants $S_1$ and $S_2$,
\item a bound on $\norm{L}$.
\end{itemize}

In the next sections we will compute all these constants for the Ulam approximation and for the
piecewise linear approximation studied in \cite{GaNi} showing how the application of the coarse-fine  strategy brings a substantial improvement in the computing speed and in the precision.

\section{The Ulam projection}\label{sec:Ulam}
The first projection that we consider is the so-called Ulam projection on the torus.
Subdivide $[0,1)$ into $n$ intervals $I_j = [(j-1)h, jh)$, $j=1,\dots,n$, with the same width $h=1/n$, and define
\[
(P^{U}_h f) (x) = \frac{1}{h}\int_{I_j} f(y) dy, \quad \text{if $x\in I_j$}, \quad j=1,\dots,n.
\]
i.e., $P_h^{U} f$ is the piecewise constant function that is equal on each interval $I_j$ to the integral average of $f$ on $I_j$. Its image $\mathcal{U}_h$ is the space of piecewise constant functions on this grid. A natural basis for $\mathcal{U}_h$ is the one composed of the characteristic functions of the intervals $I_1, I_2,\dots, I_n$. In this basis, the coordinates of a function $f \in \mathcal{U}_h$ are $f_j = f((j-1)h)$ for $j=1,2,\dots,n$, and
\begin{equation} \label{L1l1}
    \norm{f}_{L^1} = \frac{1}{n} \sum_{j=1}^n \abs{f_i} = \frac{1}{n} \norm*{\begin{bmatrix}
        f_1\\
        f_2\\
        \vdots\\
        f_n
    \end{bmatrix}}_{\ell^1}.
\end{equation}
Moreover, the matrix associated to $L^U_h$ has elements
\begin{equation} \label{Pij_ulam}
	(L^U_h)_{ij} = \frac{\abs{T^{-1}(I_i) \cap I_j}}{\abs{I_j}}.
\end{equation}
This discretization admits a simple interpretation, first suggested by Ulam in \cite[pag.73-75]{Ulam}\footnote{ For the interested reader, it can be found at \url{https://archive.org/details/collectionofmath0000ulam/page/73}}:
$(L^U_h)_{ij}$ is the probability that a random point in $I_j$ (under the scaled Lebesgue measure)
is mapped by $T$ into the interval $I_i$. Hence $L^U_h$ is the transition matrix of a Markov chain which approximates
(in a suitable sense) the dynamic of the map $T$.


\begin{remark}
When discretizing the transfer operator of a piecewise expanding map, the matrix $L_h^U$ we obtain, with elements in~\eqref{Pij_ulam} is sparse. Indeed, we can decompose
\begin{equation} \label{Pij_ulamsum}
  (L^U_h)_{ij} = \sum_{i=1}^k \frac{\abs{T_k^{-1}(I_i) \cap I_j}}{\abs{I_j}},
\end{equation}
and by Lagrange's theorem, 
\[
\abs*{T_k^{-1}(I_i)} \leq \frac{h}{\inf \abs{T'}} < \frac{h}{2},
\]
hence $L_h^U$ has at most $2m$ nonzero elements in each row.
\end{remark}

In this section we will find all the needed constants for the Ulam projection; in the Ulam case, we use the following norms.
\begin{normsulam}
The norms involved in the Ulam approximation scheme are
\begin{itemize}
 \item the strong seminorm is $\norm{.}_s:=\Var{(.)}$,
 \item the weak norm is $\norm{.}:=\norm{.}_{L^1}$,
 \item $i(f)=\int f dm$, where $m$ is the Lebesgue measure on $[0,1]$.
\end{itemize}
The function $i(f)$ is represented in the above basis by the row vector $i^* = \frac1n [1,1,\dots,1]$. 
\end{normsulam}

\subsection{Establishing the necessary bounds}
In this subsection we estimate the necessary constants for our approximation procedure.
Most of the estimates are trivial or well known, and are proved for a matter of completeness.
\begin{lemma}
Let $P^U_h$ be the Ulam discretization on $n$-elements.
Then:
\begin{enumerate}
 \item $\norm{P_h f-f}\leq \frac{h}{2}\norm{f}_s$, therefore $K=1/2$,
 \item $\trinorm{P_h f}=\trinorm{f}$ and $\int_X (f-P_h f)dx=0$, therefore $E=0$,
 \item if $f_h\in \mathcal{U}_h$ we have that $\norm{f_h}_s\leq 2\frac{\norm{f_h}}{h}$, therefore $M=2$,
 \item $\norm{f}= \trinorm{f}$ therefore $S_1=0, S_2=1$.
\end{enumerate}
\end{lemma}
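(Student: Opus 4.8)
The plan is to verify each of the four claimed constants directly from the definitions, since for the Ulam projection each estimate reduces to an elementary property of piecewise-constant conditional expectations. First I would establish Item (1). Recall that $P_h^U f$ replaces $f$ on each interval $I_j$ by its average $\frac1h\int_{I_j} f$. On a single interval $I_j$, the difference $f - P_h^U f$ has zero average, so its $L^1$-norm on $I_j$ is controlled by the oscillation of $f$ there: one has the standard bound $\int_{I_j} \abs{f - \frac1h\int_{I_j}f}\,dm \leq \frac{h}{2}\Var_{I_j}(f)$, because a zero-mean function on an interval of length $h$ has $L^1$-norm at most $\frac{h}{2}$ times its total variation on that interval (the extremal case being a step function). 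Summing over $j=1,\dots,n$ and using that the $\Var_{I_j}(f)$ add up to at most $\Var(f)=\norm{f}_s$ gives $\norm{P_h^U f - f}_{L^1}\leq \frac{h}{2}\norm{f}_s$, hence $K=1/2$.

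Item (2) is the crucial structural fact that makes the Ulam case especially clean, and it is essentially a computation. Since the weak norm here is just $\norm{\cdot}_{L^1}$ and the averaging operator preserves the integral over each $I_j$, we have $\int_X P_h^U f\,dm = \int_X f\,dm$ and more strongly $\int_{I_j}(f - P_h^U f)\,dm = 0$ for every $j$. For the total-variation--based weak norm $\trinorm{\cdot}$, I would note that $\trinorm{f}=\norm{f}_{L^1}$ in this scheme (this is exactly Item (4), which I would dispatch first or in parallel), so $\trinorm{P_h^U f}=\norm{P_h^U f}_{L^1}\leq \norm{f}_{L^1}=\trinorm{f}$, and in fact equality of integrals gives the sharp statement; either way the approximation error term vanishes, yielding $E=0$. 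Item (4) itself is immediate: in this setting the weak norm \emph{is} the $L^1$ norm and $\trinorm{f}=\norm{f}_{L^1}$ by definition of the norms for the Ulam discretization, so $\norm{f}=\trinorm{f}$ and we may take $S_1=0$, $S_2=1$.

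For Item (3), the inverse inequality on $\mathcal{U}_h$, I would use the explicit basis representation. A function $f_h\in\mathcal{U}_h$ is piecewise constant with values $f_1,\dots,f_n$ on the intervals $I_1,\dots,I_n$, so its variation is $\Var(f_h)=\sum_{j}\abs{f_{j+1}-f_j}$ (with the appropriate convention at the endpoints on the torus), while $\norm{f_h}_{L^1}=\frac1n\sum_j\abs{f_j}$ by \eqref{L1l1}. Each jump satisfies $\abs{f_{j+1}-f_j}\leq \abs{f_{j+1}}+\abs{f_j}$, and summing shows $\Var(f_h)\leq 2\sum_j\abs{f_j} = 2n\norm{f_h}_{L^1} = \frac{2}{h}\norm{f_h}_{L^1}$, giving $M=2$ with $\alpha=1$.

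The calculations are all routine; the only point requiring genuine care is the sharp constant $\frac{h}{2}$ in Item (1), since a weaker bound would propagate a worse $K$ throughout the error estimates. The main obstacle will therefore be justifying cleanly that a zero-mean function on an interval of length $h$ has $L^1$-norm bounded by $\frac{h}{2}$ times its variation on that interval, and ensuring the per-interval variations genuinely sum to the global variation rather than overcounting at the grid points. I would handle the former by an elementary extremal argument (or by writing the zero-mean function as an integral against a kernel bounded by $h/2$) and the latter by additivity of total variation over a partition into intervals.
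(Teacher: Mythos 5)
Your proof is correct and follows essentially the same route as the paper: items (2)--(4) are verified by the same elementary computations (positivity/integral preservation of the averaging operator for $E=0$, the telescoping jump bound $\Var(f_h)\leq 2\sum_j\abs{f_j}=\tfrac{2}{h}\norm{f_h}_{L^1}$ for $M=2$, and the identification of the weak norm with $L^1$ for $S_1=0,S_2=1$). The only difference is item (1), which the paper simply outsources to a citation of \cite{GMN1}, whereas you supply the standard argument (zero-mean oscillation bound $\int_{I_j}\abs{f-\bar f}\leq \tfrac{h}{2}\Var_{I_j}(f)$ plus superadditivity of variation over the partition), which is exactly the proof that reference contains.
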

\begin{proof}
We refer to \cite{GMN1} for a proof of (1).
Since $P_h$ is a positive operator, we have that $\trinorm{P_h f}\leq \trinorm{P_h 1}=\trinorm{1}=1$;
moreover
\[
\int_0^1 f-P_h f dx = \sum_{i=0}^n \int_{I_i} \left(f(x)-\frac{1}{h}\int_{I_i} f(y)dy\cdot \chi_{I_i}(x)\right)dx=0,
\]
therefore $E=0$, item (2).

If $f_h\in \mathcal{U}_h$, we have that $f_h=\sum_{i=0}^n f_i \chi_{I_i}$ and
\[
\Var{f_h}=\sum_{i=0}^{n-1}\left|f_{i+1}-f_i\right|\leq 2\sum_{i=0}^m |f_i|\leq 2 h \norm{f}_{L^1},
\]
therefore $M=2$, item (3).

Item (4) follows from the fact that $\norm{f}=\norm{f}_{L^1}$.
\end{proof}

\subsection{Spectral picture for $L^U_h$}

Note that the Ulam projection is, by its definition, $i$-preserving, i.e., $i(P_h^Uf) = i(f)$. In particular, this implies that $L^U_h = Q^U_h$. 

We have $i^* Q_h^U = i^*$, hence $Q_h^U$ is a stochastic matrix, which is also irreducible and a-periodic by the mixing hypothesis.
By the Perron-Frobenius theorem, its largest eigenvalue is $\lambda_1=1$, and the associated eigenvector $u_h$
has strictly positive entries; moreover, the second largest eigenvalue is $\lambda_2 < 1$.
In particular, $1=\norm{Q_h^U}_{L^1} = \norm{(Q_h^U)^k}_{L^1}$ for all $k\in\mathbb{N}$,
while $\norm{(Q_h^U)^k|_{\mathcal{U}_h^0}} = O(\lambda_2^k)$, where
\begin{equation} \label{Uh0}
 	\mathcal{U}_h^0 := \{g \in \mathcal{U}_h : i^* g = 0\}.
\end{equation}

\section{The piecewise linear projection}\label{sec:PL}

In this section we will find all the needed constants for the piecewise linear projection on $[0,1)$.

The piecewise linear projection is defined as follows.
Divide $[0,1]$ into $n$ equal intervals, delimited by equispaced nodes $\{a_i = \frac{i}{n}\}_{i=0}^n$.
Let $\phi_i(x)$ be the piecewise linear function
\[
\phi_i(x)=\left\{\begin{array}{cc}
    n(x-a_{i-1}) & x\in [a_{i-1},a_i] \\
    -n(x-a_{i}) & x\in [a_{i},a_{i+1}]\\
    0 & x \in [a_{i-1},a_{i+1}]^c
        \end{array}\right.
\]
and define
\[
(P^{L}_h f) (x) = \sum_{i=0}^n f(a_i)\phi_i(x)
\]
i.e., $P_h^{L} f$ is the piecewise linear function that which interpolates $f(a_i)$ on the given nodes. The image $\mathcal{U}_h$ of $P_h^L$ is the space of piecewise linear functions on this grid. A natural basis for this space is $(\phi_i(x))_{i=1,\dots,n}$. Given a function $f \in \mathcal{U}_h$, its coordinates in this basis are $f_j= f(a_{j-1})$ for $j=1,2,\dots,n$, and
\[
\norm{f}_{L^\infty} = \max_{i=1,\dots,n} \abs{f_i} = \norm*{
    \begin{bmatrix}
        f_1\\
        f_2\\
        \vdots\\
        f_n
    \end{bmatrix}
}_{\ell^\infty}.
\]
The matrix associated to $L_h$ has elements
\begin{equation} \label{Pij_L}
	(L_h)_{ij} = \sum_{x\in T^{-1}(a_i)}\frac{\phi_j(x)}{|T'(x)|}.
\end{equation}


\begin{normspiecewise}
The norms involved in the piecewise linear approximation scheme are
\begin{itemize}
 \item the strong norm $\norm{.}_s:=\norm{.}_{Lip}$,
 \item the weak norm $\norm{.}:=\norm{.}_{\infty}$,
 \item $i(f)=\int f dm$, where $m$ is the Lebesgue measure on $[0,1]$.
\end{itemize}
The function $i(f)$ is represented in the above basis by the row vector $i^* = \frac1n [1,1,\dots,1]$. 
\end{normspiecewise}

\subsection{Expanding maps and the Lasota Yorke inequality}
In this case we need to prove that the operator $L$ preserves a 
stronger norm; this is proved in the next theorem.

\begin{theorem}[$\Lip-L^1$ Lasota-Yorke inequality] \label{thm:LipLY}
Let $T$ be in $C^{2}(S^1)$, with $|T'(x)|>1$ and $|T''/(T')^2|<D$. Then,
an inequality \eqref{genericLY} holds with
\[
A = \sup_x \frac{(2D+1)}{|T'(x)|}\quad  B =D(D+1)
\]
\end{theorem}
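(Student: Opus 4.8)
The plan is to estimate $\Lip(Lf)=\norm{(Lf)'}_{\infty}$ directly from the inverse-branch representation of the transfer operator and then to trade the pointwise values of $f$ for its $L^1$ norm by a per-branch averaging argument controlled by the distortion bound $D$. Since $T\in C^{2}(S^1)$ with $\abs{T'}>1$, the map is an expanding covering of the circle: there are finitely many $C^2$ inverse branches $y_k$ whose images $J_k$ partition $S^1$, each with $T(J_k)=S^1$, and
\[
(Lf)(x)=\sum_k \frac{f(y_k(x))}{\abs{T'(y_k(x))}}.
\]
Because $f$ is Lipschitz and the $y_k$ are $C^2$ diffeomorphisms, $Lf$ is Lipschitz and differentiable almost everywhere, with, for a.e.\ $x$,
\[
(Lf)'(x)=\sum_k \frac{f'(y_k(x))}{\abs{T'(y_k(x))}^{2}}-\sum_k \frac{f(y_k(x))\,T''(y_k(x))}{\abs{T'(y_k(x))}^{3}}.
\]

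First I would record the elementary distortion lemma that drives everything: on each branch the mean value theorem gives $\xi_k\in J_k$ with $\abs{J_k}=1/\abs{T'(\xi_k)}$ (since $\int_{J_k}\abs{T'}=\abs{T(J_k)}=1$), and because $1/\abs{T'}$ is $D$-Lipschitz (as $\abs{(1/\abs{T'})'}=\abs{T''/(T')^{2}}<D$), for every $y\in J_k$ one gets $\abs{T'(\xi_k)}/\abs{T'(y)}\le 1+D\abs{T'(\xi_k)}\abs{y-\xi_k}\le 1+D$, using $\abs{y-\xi_k}\le\abs{J_k}$. Then I would bound the two sums separately. For the $f'$ term I use $\abs{f'}\le\Lip(f)$ and $\sum_k \abs{T'(y_k(x))}^{-2}\le(\inf\abs{T'})^{-1}\sum_k \abs{T'(y_k(x))}^{-1}$; the last sum is the value at $x$ of $L1$, which the distortion lemma bounds by $\sum_k(1+D)\abs{J_k}=D+1$, contributing $(D+1)/\inf\abs{T'}$ to the $\Lip(f)$ coefficient.

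For the $f$ term I use $\abs{T''/(T')^{2}}<D$ to reduce it to $D\sum_k \abs{f(y_k(x))}/\abs{T'(y_k(x))}$ and then replace the pointwise value by a branch average: $\abs{f(y_k(x))}\le \frac{1}{\abs{J_k}}\int_{J_k}\abs{f}+\Lip(f)\abs{J_k}$. After dividing by $\abs{T'(y_k(x))}$, the factor $1/(\abs{T'(y_k(x))}\abs{J_k})=\abs{T'(\xi_k)}/\abs{T'(y_k(x))}\le D+1$ turns the integral pieces into $(D+1)\sum_k\int_{J_k}\abs{f}=(D+1)\norm{f}_{L^1}$, while the residual pieces are controlled by $\sum_k\abs{J_k}/\inf\abs{T'}=1/\inf\abs{T'}$. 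Hence the $f$ term contributes $D(D+1)\norm{f}_{L^1}$ together with $D/\inf\abs{T'}$ times $\Lip(f)$. Adding the two $\Lip(f)$ contributions gives $(D+1)/\inf\abs{T'}+D/\inf\abs{T'}=\sup_x(2D+1)/\abs{T'(x)}$, and the $L^1$ contribution is $D(D+1)$, which are exactly the claimed $A$ and $B$; since the estimate holds for a.e.\ $x$ it bounds $\norm{(Lf)'}_{\infty}=\Lip(Lf)$.

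I expect the main obstacle to be the careful justification of the pointwise-to-$L^1$ passage in the $f$ term: one must make the per-branch averaging and the distortion factor $\abs{T'(\xi_k)}/\abs{T'(y)}\le D+1$ interact so that the constant comes out as the sharp $D(D+1)$ rather than something looser, and this hinges on choosing $\xi_k$ as the exact point realizing $\abs{J_k}=1/\abs{T'(\xi_k)}$. A secondary (routine but necessary) point is verifying that the a.e.\ differentiation formula for $(Lf)'$ is legitimate for a merely Lipschitz $f$, which I would secure by noting that $f$ is differentiable off a Lebesgue-null set whose $y_k$-preimages are again null, so the displayed formula holds almost everywhere.
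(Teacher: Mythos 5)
Your proof is correct and reaches exactly the constants $A=\sup_x(2D+1)/\lvert T'(x)\rvert$ and $B=D(D+1)$, but it follows a genuinely different route from the paper. The paper works with the difference quotient $\lvert Lf(x)-Lf(y)\rvert$, splits it into a first-order piece and a distortion piece, and arrives at the intermediate bound $\Lip(Lf)\le\lambda\norm{L1}_\infty\Lip(f)+D\norm{Lf}_\infty$; it then closes the estimate by invoking the previously established $\Var$--$L^1$ Lasota--Yorke inequality (the full-branch case of Lemma~\ref{thm:VarLY}) to bound both $\norm{Lf}_\infty\le\lambda\Lip(f)+(D+1)\norm{f}_{L^1}$ and $\norm{L1}_\infty\le D+1$. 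You instead differentiate the inverse-branch formula directly and make the argument self-contained: the passage from the pointwise values $f(y_k(x))$ to $\norm{f}_{L^1}$ is achieved by a per-branch averaging step $\lvert f(y)\rvert\le\frac{1}{\lvert J_k\rvert}\int_{J_k}\lvert f\rvert+\Lip(f)\lvert J_k\rvert$ combined with the mean-value point $\xi_k$ realizing $\lvert J_k\rvert=1/\lvert T'(\xi_k)\rvert$ and the distortion bound $\lvert T'(\xi_k)\rvert/\lvert T'(y)\rvert\le 1+D$ (which also yields $\norm{L1}_\infty\le D+1$ without appeal to the BV theory). What the paper's route buys is economy --- it reuses Lemma~\ref{thm:VarLY}, which is needed elsewhere anyway --- while your route buys independence from the bounded-variation machinery and makes transparent where each of the three contributions $(D+1)/\inf\lvert T'\rvert$, $D/\inf\lvert T'\rvert$ and $D(D+1)$ comes from. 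The two technical points you flag (the choice of $\xi_k$ making the constant come out as exactly $D(D+1)$, and the a.e.\ differentiation of the Lipschitz function $f$ along the $C^2$ branches) are handled adequately; together with the identity $\Lip(g)=\operatorname{ess\,sup}\lvert g'\rvert$ for Lipschitz $g$, the argument is complete.
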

\begin{proof}
Since $T\in C^{2}(S^1)$, $|T'(x)|>1$ there exists (at least)
one fixed point of $T$; we can label this fixed point as $0$
and see $T$ as a map satisfying \eqref{eq:LY-fullbranch};
in specific, there exists a partition $\{P_k\}_{k=1}^b$ such that
$T(P_k)=[0,1]$ for all $k$; we denote by $T_k:=T|_{P_k}$.
Please remark that $\lambda$ and $D$ are defined in the proof of
\eqref{eq:LY-fullbranch}.

\begin{align*}
&|Lf(x)-Lf(y)|=\bigg|\sum_k \frac{f(T_k^{-1}x)}{T'_k(T_k^{-1}x)}-\frac{f(T_k^{-1}y)}{T'_k(T_k^{-1}y)}\bigg|\\
&\leq \bigg|\sum_k \frac{f(T_k^{-1}x)-f(T_k^{-1}y)}{T'_k(T_k^{-1}x)}\bigg|+\bigg|\sum_k \frac{f(T_k^{-1}y)}{T'_k(T_k^{-1}y)}-\frac{f(T_k^{-1}y)}{T'_k(T_k^{-1}x)}\bigg|\\
&\leq \textrm{Lip(f)} \lambda |x-y| \sum_k \frac{1}{T'_k(T_k^{-1}x)}+\bigg|\sum_k \frac{f(T_k^{-1}y)}{T'_k(T_k^{-1}y)}\bigg(1-\frac{T'_k(T_k^{-1}y)}{T'_k(T_k^{-1}x)}\bigg)\bigg|.
\end{align*}
and
\begin{align*}
&\bigg|1-\frac{T'_k(T_k^{-1}y)}{T'_k(T_k^{-1}x)}\bigg|=\bigg|\frac{T'_k(T_k^{-1}x)-T'_k(T_k^{-1}y)}{T'_k(T_k^{-1}x)}\bigg|\\
&\leq \lambda \textrm{Lip}(T'_k)|T_k^{-1}x-T_k^{-1}y|\leq \lambda^2 \textrm{Lip}(T'_k) |x-y|.
\end{align*}
Hence
\begin{equation} \label{intermediateLY}
	\Lip(Lf) \leq \lambda \norm{L1}_\infty \Lip(f) + D \norm{Lf}_\infty.
\end{equation}
Now we use \eqref{eq:LY-fullbranch} to estimate
\[
\norm{Lf}_\infty \leq \Var(Lf) + \norm{Lf}_{L^1} \leq \lambda \Var(f) + D \norm{f}_{L^1} + \norm{f}_{L^1}\leq \lambda \Lip(f) + (D+1)\norm{f}_{L^1},
\]
and, in particular, for the constant function $1$, $\norm{L1}_\infty \leq D+1$. Plugging these two bounds into~\eqref{intermediateLY} we get the thesis.
\end{proof}

\begin{remark}
Under the same hypotheses, the matrix $L_h$ with elements in~\eqref{Pij_L} is sparse. Indeed, for each $x\in [0,1)$ at most two of the functions $\phi_j(x)$ are nonzero, hence $L_h$ has at most $2m$ nonzero elements in each row.
\end{remark}

\begin{remark}
We need $(2D+1)\lambda < 1$ for this to be a valid Lasota-Yorke inequality. If this property does not hold, then we can replace $T$ with one of its iterates $T^{k}$. Clearly $T^{k}$ has the same invariant measure as $T$; moreover, the values of $\lambda$ and $D$ are replaced by $\lambda^k$ and $D(1+\lambda+ \dots + \lambda^{k-1}) < \frac{D}{1-\lambda}$. In particular, for sufficiently large $k$ one has
$\lambda^k(2\frac{D}{1-\lambda}+1) < 1$, hence this strategy works. Note, though, that $T^k$ has $kb$ monotonic branches instead of $b$, hence the associated matrix is less sparse and the whole method is more computationally expensive.
\end{remark}

\subsection{Establishing the necessary bounds}
\begin{lemma}
Let $P^L_h$ be the piecewise linear discretization on $n$-elements.
Then:
\begin{enumerate}
 \item $\norm{P_h f-f}\leq \frac{h}{2}\norm{f}_s$, therefore $K=1/2$,
 \item $\trinorm{P_h f-1\cdot \int_X (f-P_h f)dx}\leq \trinorm{f}+\Lip(f)h/2$, therefore $E=1/2$,
 \item if $f_h\in \mathcal{U}_h$ we have that $\norm{f_h}_s\leq 2\frac{\norm{f}}{h}$, therefore $M=2$,
 \item $\norm{f}\leq \trinorm{f}+\norm{f}_s$ therefore $S_1=1, S_2=1$,
 \item $\norm{L} \leq D+1$.
\end{enumerate}
\end{lemma}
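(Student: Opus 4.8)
The plan is to establish each of the five bounds for the piecewise linear projection $P_h^L$ separately, as they correspond to the constants $K$, $E$, $M$, $S_1,S_2$, and $\norm{L}$ required by the abstract framework. Each item is essentially a direct computation using the explicit form of $P_h^L$ as piecewise linear interpolation on the equispaced nodes $\{a_i\}$, together with the norms fixed in the \textbf{Norms for the piecewise linear discretization}, namely the strong norm $\norm{.}_s = \norm{.}_{Lip}$ and weak norm $\norm{.} = \norm{.}_\infty$.

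For item (1), I would use that on each subinterval $[a_{i-1},a_i]$ the interpolant $P_h^L f$ agrees with $f$ at the endpoints, so $f - P_h^L f$ vanishes at the nodes; a standard interpolation estimate then bounds $\norm{f - P_h^L f}_\infty$ on each subinterval by $\tfrac{h}{2}\Lip(f)$ (the worst case being halfway between nodes, where a Lipschitz function can deviate by at most $\tfrac{h}{2}\Lip(f)$ from the linear interpolant). Taking the supremum over subintervals gives $K = 1/2$. For item (3), I would compute $\Lip(f_h)$ for $f_h \in \mathcal{U}_h$ directly: on each subinterval the slope is $\frac{f_i - f_{i-1}}{h}$, so $\Lip(f_h) = \max_i \frac{|f_i - f_{i-1}|}{h} \leq \frac{2}{h}\max_i |f_i| = \frac{2}{h}\norm{f_h}_\infty$, yielding $M = 2$. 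Item (4) is immediate from the elementary inequality $\norm{f}_\infty \leq \trinorm{f} + \Lip(f)$ valid for any Lipschitz function on $[0,1]$ (bounding $f$ at any point by its $L^1$-average plus the maximal oscillation, which is at most $\Lip(f)$), giving $S_1 = S_2 = 1$.

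The remaining two items are the substantive ones. For item (5), I would invoke Theorem~\ref{thm:LipLY}, which already provides the computation $\norm{L1}_\infty \leq D+1$; since $\norm{L} = \norm{L}_{\infty \to \infty}$ and $L$ is a positive operator preserving the integral, $\norm{Lf}_\infty$ is controlled by $\norm{L1}_\infty \norm{f}_\infty$, so $\norm{L} \leq D+1$ follows. Item (2) is the delicate one and I expect it to be the main obstacle: unlike the Ulam case, $P_h^L$ is \emph{not} $i$-preserving, so one must control the trinorm of the integral-corrected interpolant $P_h^L f + 1\cdot i(f - P_h^L f)$. I would bound $\trinorm{P_h^L f - 1\cdot \int_X(f - P_h^L f)\,dx}$ by comparing the $L^1$-mass of the interpolant to that of $f$, estimating the local interpolation error contribution. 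The key quantitative input is again the endpoint-interpolation bound $\abs{f - P_h^L f} \leq \tfrac{h}{2}\Lip(f)$ from item (1), which feeds into an $L^1$-type estimate of the correction term and produces the factor $\Lip(f)h/2$, hence $E = 1/2$.

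The main technical care required throughout is keeping the interpolation error estimates sharp enough to land exactly the stated constants ($K = E = 1/2$, $M = 2$), rather than merely getting the correct order in $h$; since the abstract error bound \eqref{theorem1_estimate} depends linearly on $K$ and the coarse-fine estimate of Corollary~\ref{finalcor} depends on $E$ through the matrix $\mathcal{A}_h$, these constants genuinely matter for the efficiency of the method. I expect each bound to follow from elementary one-variable calculus once the geometry of piecewise linear interpolation is made explicit, with item (2) demanding the most attention because of the non-$i$-preserving nature of $P_h^L$ and the consequent need to track the integral-correction term.
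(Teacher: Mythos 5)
Your proposal matches the paper's proof essentially step for step: the midpoint argument bounding $\abs{f-P_h^Lf}$ by $\tfrac{h}{2}\Lip(f)$ for item (1), integrating that pointwise bound and applying the triangle inequality for item (2), the direct slope computation for item (3), the sup-versus-$L^1$ oscillation bound for item (4), and positivity of $L$ together with $\norm{L1}_\infty\leq D+1$ from Theorem~\ref{thm:LipLY} for item (5). No meaningful differences in approach.
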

\begin{proof}
To prove item (1) we study $f|_{[a_i,a_{i+1}]}$ and suppose that $f(a_i)=0, f(a_{i+1})=0$;
in this case, $||f|_{[a_i,a_{i+1}]}||_{\infty}\leq \Lip(f)\cdot (x-a_i)$ and
$||f|_{[a_i,a_{i+1}]}||_{\infty}\leq \Lip(f)\cdot (a_{i+1}-x)$. This means that
the graph of $|f(x)|$ lies under the graphs of the linear functions $y_1(x)=\Lip(f)\cdot (a_{i+1}-x)$
and $y_2(x)=\Lip(f)\cdot (x-a_i)$ which intersect in $(a_{i+1}-a_i)/2$, therefore
\[\norm*{f|_{[a_i,a_{i+1}]}}_{L^\infty}\leq \Lip(f)\frac{h}{2} \]
and, since it is true for all $i$ we have
\[
\norm*{f-P^L_h f}_{L^\infty}\leq \Lip(f)\frac{h}{2}.
\]

This implies that
\[
\abs*{\int_0^1 f- P^L_h f dx} \leq \int_0^1 |f- P^L_h f|dx\leq  \Lip(f)\frac{h}{2}.
\]

From this follows
\[
\trinorm{P_h f-1\cdot \int_X (f-P_h f)dx}\leq\trinorm{P_h f}+\trinorm{1\cdot \int_X (f-P_h f)dx}\leq \trinorm{f}+\Lip(f)\frac{h}{2}.
\]
and that
\begin{align*}
\trinorm{P^L_h f}&= \int_0^1 |P^L_h f(x)|-|f(x)|+|f(x)| dx\leq \int_0^1|f(x)|+ ||P^L_h f(x)|-|f(x)|| dx\\
&\leq \trinorm{f}+\int_0^1 |P^L_h f(x)-f(x)|dx\leq \trinorm{f}+\Lip(f)\frac{h}{2},
\end{align*}
therefore $E=1/2$.

We prove now Item~(3). If $f_h\in \mathcal{U}_h$ we have that $f_h(x)=\sum_{i=0}^n a_i\cdot \phi_i(x)$,
where the $\phi(x)$ are piecewise linear.
Therefore
\[
\Lip(f)=\max_i\frac{|a_{i+1}-a_i|}{h}\leq 2 \frac{\max_i |a_i|}{h}=2\frac{\norm{f}}{h}
\]

Item~(4) follows from the fact that for $x\neq \tilde{x}$
\[
|f(\tilde{x})|=|f(x)+\frac{f(\tilde{x})-f(x)}{\tilde{x}-x}(\tilde{x}-x)|\leq |f(x)|+\Lip(f)\cdot |x-\tilde{x}|.
\]
Suppose now that $|f|$ attains its maximum in $\tilde{x}$, and integrate:
\[
\int_0^1 ||f||_{\infty} dx\leq \int_0^1  |f(x)|+\Lip(f)\cdot |x-\tilde{x}| dx \leq  ||f||_{L^1}+\Lip(f).
\]
Item~(5) follows from the fact that $\norm{L}_{L^\infty}=\norm{L1}_{L^\infty}$, since $L$ is a positive operator, and $\norm{L1}_{L^\infty} \leq D+1$ as in the proof of Theorem~\ref{thm:LipLY}.
\end{proof}

\subsection{Spectral picture for $L^L_h$ and $Q^L_h$}
Compared with the Ulam projection, the spectral picture is more blurry for $L^L_h$ and $Q^L_h$.
The matrix $L_h^L$ is still a non-negative matrix, but since $P_h^L$ is not $i$-preserving its first eigenvector
$\lambda_1$ is not in general equal to $1$.

The row vector $i^*$ is a left eigenvector of $Q_h^L$ with eigenvalue equal to $1$,
however, $Q_h^L$ is not a non-negative matrix, so we do not have all the results implied
by the Perron-Frobenius theory of Markov chains; in particular, $\trinorm{Q_h^L} > 1$ in general
(and our experimental results suggest that even the limit $\lim_{h\to 0} \trinorm{Q_h^L} = 1$ does \emph{not} hold).

Nevertheless, the results by Keller and Liverani (see Corollary~\ref{cor:KeLi}) ensure that $\lambda_2$
is smaller than $1$ for sufficiently small values of $h$.

\section{Practical computation}\label{sec:8}
In this section we present the results that permit us to efficiently compute the
objects and the constants involved in our treatment. There are three main points in the algorithm:
\begin{itemize}
  \item Computing an interval matrix $\mathbf{L} \ni L_h$ that encloses $L_h$;
  \item Computing a fixed point vector for $\mathbf{L}$;
  \item Computing norm estimates $C_k \geq \norm{Q_h^k|_{\mathcal{U}_h^0}}$ for $k=0,1,2,\dots,m$, and reaching a $m$ such that $C_m < 1$.
\end{itemize}
We will address them one by one in the next sections.

\subsection{Assembling the sparse matrices}
Recall that the projection $P_h$ permits us to build a discretization of $L$, the projected operator
$L_h := P_h L P_h$.

Then, $L_h: \mathcal{U}_h \to \mathcal{U}_h$ can be represented by a square matrix in a suitable
basis of $\mathcal{U}_h$.

We describe here a strategy to compute the matrix associated to $L_h$ for the case of the Ulam and piecewise linear projections on the torus $[0,1)$. 
With some abuse of notation, we will denote with the same symbol both the operator
(acting on functions on $[0,1]$) and the matrix that represents it. We assume that the dynamic $T$ is composed of $b$ continuous and monotonic branches $T_1,T_2,\dots,T_{b}$, whose domains form a partition of $[0,1)$.

The partition underlying the projection (which is typically equispaced) can be described by an increasing sequence $\mathcal{Y}: 0=y_0 < y_1 < y_2 < \dots < y_{n-1} < y_n=1$ that partitions $[0,1)$ of $T$ into $\bigcup_{j=i}^n I_j$, with $I_j = [y_{j-1},y_j)$. We assume that the co-domain $[0,1)$ of $T:[0,1) \to [0,1)$ is partitioned according to this sequence $\mathcal{Y}$; then, its domain $[0,1)$ is decomposed into $nb$ intervals $T_k^{-1}(I_j)$, some of them possibly empty; their endpoints are an increasing sequence $\mathcal{X}: 0 = x_0 < x_1 < \dots < x_N=1$ that defines a partition of the domain $[0,1)$ of $T$. We say that this sequence $\mathcal{X}$ is the \emph{pull-back} of the sequence $\mathcal{Y}$, and we denote it by $\mathcal{X} = T^{-1}(\mathcal{Y})$. An example is shown in Figure~\ref{fig:pullback}.
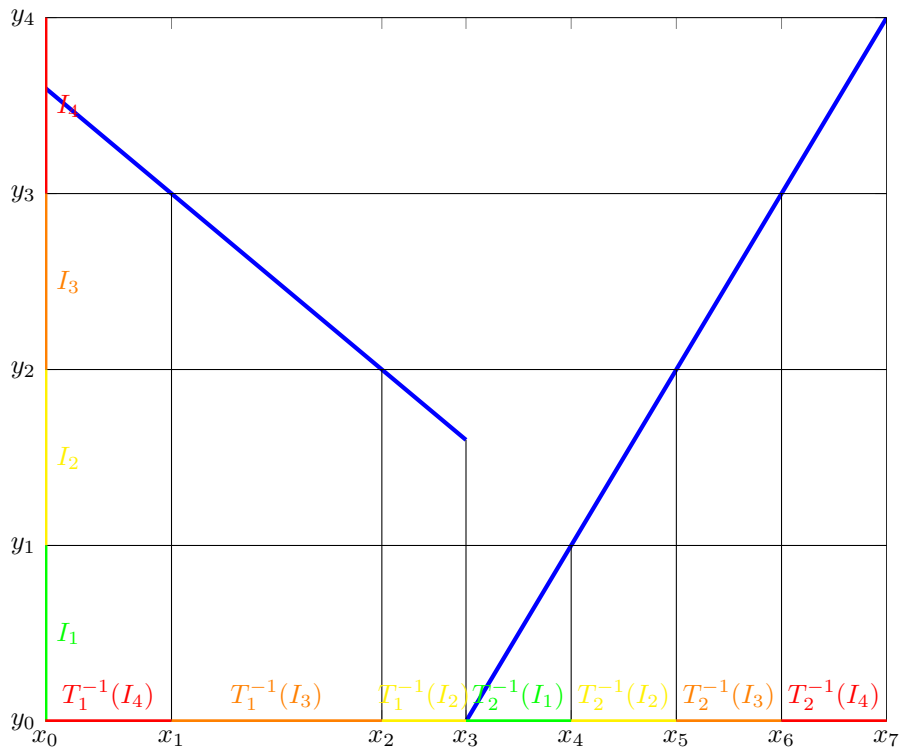
\begin{figure}[ht]
\begin{tikzpicture}
\begin{axis}[xmin=0, xmax=1, ymin=0, ymax=1, 
             axis line style={draw=none}, 
             width=\textwidth,
             ytick = {0, 0.25, 0.5, 0.75, 1},
             yticklabels = {$y_0$, $y_1$, $y_2$, $y_3$, $y_4$},
             xtick = {0, 0.15, 0.4, 0.5, 0.625, 0.75, 0.875, 1},
             xticklabels = {$x_0$, $x_1$, $x_2$, $x_3$, $x_4$, $x_5$, $x_6$, $x_7$}
            ]
\addplot[domain=0:0.5, draw=blue, ultra thick]{0.9-x};
\addplot[domain=0.5:1, draw=blue, ultra thick]{2*x-1};
\draw (axis cs:0, 0.25) -- (axis cs:1, 0.25);
\draw (axis cs:0, 0.5) -- (axis cs:1, 0.5);
\draw (axis cs:0, 0.75) -- (axis cs:1, 0.75);
\draw (axis cs:0, 1) -- (axis cs:1, 1);
\draw (axis cs:0.5,0) -- (axis cs:0.5,0.4);
\draw (axis cs:0.15,0.75) -- (axis cs:0.15,0);
\draw (axis cs:0.4,0.5) -- (axis cs:0.4,0);
\draw (axis cs:0.625,0.25) -- (axis cs:0.625,0);
\draw (axis cs:0.75,0.5) -- (axis cs:0.75,0);
\draw (axis cs:0.875,0.75) -- (axis cs:0.875,0);
\draw (axis cs:1, 1) -- (axis cs:1, 0);
\draw[red, ultra thick] (axis cs:0,0.75) -- node[right] {$I_4$}  (axis cs:0,1) ;
\draw[orange, ultra thick] (axis cs:0,0.5) -- node[right] {$I_3$} (axis cs:0,0.75);
\draw[yellow, ultra thick] (axis cs:0,0.25) -- node[right] {$I_2$} (axis cs:0,0.5);
\draw[green, ultra thick] (axis cs:0,0) -- node[right] {$I_1$} (axis cs:0,0.25);

\draw[red, ultra thick] (axis cs:0,0) -- node[above] {$T_1^{-1}(I_4)$} (axis cs:0.15,0);
\draw[orange, ultra thick] (axis cs:0.15,0) -- node[above] {$T_1^{-1}(I_3)$} (axis cs:0.4,0);
\draw[yellow, ultra thick] (axis cs:0.4,0) -- node[above] {$T_1^{-1}(I_2)$} (axis cs:0.5,0);
\draw[green, ultra thick] (axis cs:0.5,0) -- node[above] {$T_2^{-1}(I_1)$} (axis cs:0.625,0);
\draw[yellow, ultra thick] (axis cs:0.625,0) -- node[above] {$T_2^{-1}(I_2)$} (axis cs:0.75,0);
\draw[orange, ultra thick] (axis cs:0.75,0) -- node[above] {$T_2^{-1}(I_3)$} (axis cs:0.875,0);
\draw[red, ultra thick] (axis cs:0.875,0) -- node[above] {$T_2^{-1}(I_4)$} (axis cs:1,0);
\end{axis}
\end{tikzpicture}
\caption{An example of ``pull-back'': the pull-back along the map $T$ (drawn in blue) of the sequence $(y_j)$ on the y-axis is the sequence $(x_i)$ on the x-axis.} \label{fig:pullback}
\end{figure}  
The endpoints $x_i$ are either preimages $T_k^{-1}(y_j)$ for some $k$ and $j$, or endpoints of the domain of each branch; clearly we have $N \leq nb$, but some intervals may be missing if the map is not full-branch; for instance, in the example in Figure~\ref{fig:pullback} the interval $T^{-1}(I_1)$ is empty, and hence there are $7$ intervals instead of $8=4\cdot 2$ in the partition $\mathcal{X}$. 

Interval arithmetic methods such as the interval Newton method~\cite{Tucker} can be used to compute tight inclusion intervals $\mathbf{x}_i$ for each element $x_i$ of the pull-back partition, given explicit formulas to compute each branch of the map $T_k$. Once the $\mathbf{x}_i$ are available, inclusions $\mathbf{L}_{ij}$ for the matrix elements in either~\eqref{Pij_ulam} or~\eqref{Pij_L} are easy to compute.

The computation of the $\mathbf{x}_i$ can be performed automatically; we sketch how the method works for the second branch $T_2$ of the dynamic in Figure~\ref{fig:pullback}. One starts from the endpoints $(a,b)$ of $\operatorname{dom}(T_2)$. By checking how $T_2(a), T_2(b)$ compare with the elements of the sequence $\mathcal{Y}$, one can determine that $\mathbf{x}_3=a$, $\mathbf{x}_7=b$, and that three unknown values $x_4=T_2^{-1}(y_1), x_5=T_2^{-1}(y_2), x_6=T_2^{-1}(y_3)$ need to be computed. We can use a bisection strategy to reduce the number of iterations needed in the interval Newton method, as follows. We first compute $\mathbf{x}_5$ by applying the interval Newton method to find a zero of $x \mapsto T_2(x) - y_2$, using the whole domain  $\operatorname{hull}(\mathbf{x}_3,\mathbf{x}_7)$ as a starting interval. Once $\mathbf{x}_5$ has been computed, we obtain $\mathbf{x}_4$ by applying the interval Newton method to find a zero of $x \mapsto T_2(x) - y_1$ using the tighter interval $\operatorname{hull}(\mathbf{x}_3,\mathbf{x}_5)$ as a starting point instead of the whole domain, and similarly we use $\operatorname{hull}(\mathbf{x}_5,\mathbf{x}_7)$ as a starting interval in the interval Newton method to compute $\mathbf{x}_6$.

\begin{remark}
Since each branch of $T$ is expanding, the preimage problem is well-conditioned, and we expect to be able to compute enclosures with radius $\operatorname{rad}(\mathbf{x}_i)$ of the same order of magnitude as the machine precision used.
\end{remark}

\begin{remark}
This description in terms of pull-backs of partitions has the additional benefit that pull-backs of composed maps are particularly easy to compute, since $(S\circ T)^{-1}(\mathcal{Y}) = T^{-1}(S^{-1})(\mathcal{Y}))$.
\end{remark}

An explicit algorithm to compute a sparse interval matrix $\mathbf{L}\ni L_h$ is sketched in Algorithm~\ref{algo:assembly}. It has complexity $O(nb)$, since the sets $S_\ell$ have dimension $O(1)$. The algorithm returns the sparse matrix in coordinate list format, i.e., a list $\mathcal{S}$ of triples $(i,j,\mathbf{c})$ such that $\mathbf{L}_{ij} = \sum_{(i,j,\mathbf{c})\in\mathcal{S}} \mathbf{c}$. Note that the list $\mathcal{S}$ will in general contain multiple entries with the same $i$ and $j$.
\begin{algorithm}
\caption{Assembling a sparse interval matrix $\mathbf{L} \ni L_h$ (for the Ulam or piecewise linear discretization)} \label{algo:assembly}
\begin{algorithmic}[1]
\Function{assemble\_Lh}{$T,n$}
\Require{$T$, partition $\mathcal{Y}$ (typically an equispaced one)}
\Ensure{A list $\mathcal{S}$ of triples $(i,j,\mathbf{c})$}
\State{compute enclosures $(\mathbf{x}_\ell)_{\ell=1}^N$ for the pull-back $\mathcal{X} = T^{-1}(\mathcal{Y})$,}
\For{$\ell=1,2,\dots,N$}
\State determine the set $S_\ell = \{j:(\mathbf{x}_{\ell-1},\mathbf{x}_\ell)\cap I_j \neq \emptyset\}$ or $S_\ell = \{j:\phi_j(\mathbf{x}_\ell)\neq 0\}$ via a binary search on $\mathcal{Y}$;
\For{all $j\in S_\ell$}
\State push $(i,j,\mathbf{c})$ into $\mathcal{S}$, where $\mathbf{c}$ is a summand of~\eqref{Pij_ulamsum} or~\eqref{Pij_L},
\State and $i$ is the index such that $T((x_{\ell-1},x_{\ell})) \subseteq (y_{i-1},y_i)$; 
\EndFor
\EndFor
\EndFunction
\end{algorithmic}
\end{algorithm}

\subsection{Numerically approximating the fixed point}
We compute numerically an approximate fixed point $\tilde{u}_h$ of the operator $Q_h$ by using the restarted Arnoldi method~\cite[Section~10.5]{GVL} to return its eigenvector with eigenvalue (approximately) 1. While $L_h$ is a sparse matrix, $Q_h$ is not, in general. However, we can compute its (approximate) action on a vector $v$ using
\[
Q_h v \approx \operatorname{mid}(\mathbf{L}) v + e i^* (v - \operatorname{mid}(\mathbf{L}) v).
\]
For the Ulam discretization, $Q_h=L_h$ and we can drop the second summand.

In general, the computed eigenvector will not satisfy the equality $i^*\tilde{u}_h=1$ exactly. The following corollary of Theorem
\ref{th:fixed_point} allows us to estimate the distance between the computed $\tilde{u}_h$ and the exact fixed point of the operator.
\begin{corollary} \label{cor:fixedpoint2}
Under the hypothesis of Theorem~\ref{th:fixed_point} and Lemma~\ref{lem:onembound},
let $\tilde{u}_h$ be a vector such that $\norm{Q_h \tilde{u}_h - \tilde{u}_h} \leq \varepsilon_1$ and $|i^*\tilde{u}_h-1| \leq \varepsilon_2 < 1$.
Then,
\begin{equation*}
\norm{u-\tilde{u}_h} \leq \frac{C_0+C_1+\dots+C_{m-1}}{1-C_m}(2 K h \left(1+\norm{L}\right)\norm{u}_s + \frac{\varepsilon_1}{1-\varepsilon_2})+\frac{\varepsilon_2}{1-\varepsilon_2}\norm{\tilde{u}_h}.
\end{equation*}
\end{corollary}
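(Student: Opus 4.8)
The strategy is to deduce this from Theorem~\ref{th:fixed_point} by renormalizing the approximate eigenvector so that it exactly satisfies the normalization hypothesis $i^*(\cdot)=1$ used there, and then bounding separately the error introduced by this renormalization.

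First I would set $\beta := i^*\tilde{u}_h$ and define the renormalized vector $u_h := \tilde{u}_h/\beta$, which belongs to $\mathcal{U}_h$ and satisfies $i^* u_h = 1$, so that Theorem~\ref{th:fixed_point} applies to it. This is well defined because $\abs{\beta-1}\leq \varepsilon_2 < 1$ forces $\abs{\beta}\geq 1-\varepsilon_2 > 0$. The residual of $u_h$ is then controlled by
\[
\norm{Q_h u_h - u_h} = \frac{1}{\abs{\beta}}\norm{Q_h\tilde{u}_h - \tilde{u}_h} \leq \frac{\varepsilon_1}{1-\varepsilon_2},
\]
using the linearity of $Q_h$ together with the bounds $\norm{Q_h\tilde{u}_h-\tilde{u}_h}\leq\varepsilon_1$ and $\abs{\beta}\geq 1-\varepsilon_2$.

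Next I would apply Theorem~\ref{th:fixed_point} to $u_h$ with the value $\varepsilon = \varepsilon_1/(1-\varepsilon_2)$, and replace the infinite series $\sum_{k=0}^\infty C_k$ by the finite bound $\frac{1}{1-C_m}(C_0+C_1+\dots+C_{m-1})$ furnished by the first estimate of Lemma~\ref{lem:onembound}. This yields
\[
\norm{u-u_h} \leq \frac{C_0+C_1+\dots+C_{m-1}}{1-C_m}\left(2Kh(1+\norm{L})\norm{u}_s + \frac{\varepsilon_1}{1-\varepsilon_2}\right).
\]

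Finally I would quantify the renormalization error. Since $u_h-\tilde{u}_h = (\tfrac{1}{\beta} - 1)\tilde{u}_h$, we get
\[
\norm{u_h-\tilde{u}_h} = \frac{\abs{1-\beta}}{\abs{\beta}}\norm{\tilde{u}_h} \leq \frac{\varepsilon_2}{1-\varepsilon_2}\norm{\tilde{u}_h},
\]
again using $\abs{1-\beta}\leq\varepsilon_2$ and $\abs{\beta}\geq 1-\varepsilon_2$. Combining the two displays through the triangle inequality $\norm{u-\tilde{u}_h}\leq\norm{u-u_h}+\norm{u_h-\tilde{u}_h}$ produces exactly the claimed bound. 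There is no serious obstacle here: the only point requiring care is tracking the factor $\abs{\beta}^{-1}$ and invoking $\varepsilon_2 < 1$ to keep it bounded; everything else is a direct application of the two earlier results and the triangle inequality.
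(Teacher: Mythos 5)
Your proposal is correct and follows essentially the same route as the paper's own proof: renormalize $\tilde{u}_h$ to $u_h=\tilde{u}_h/i^*\tilde{u}_h$, bound the new residual by $\varepsilon_1/(1-\varepsilon_2)$, apply Theorem~\ref{th:fixed_point} together with the first estimate of Lemma~\ref{lem:onembound}, and control $\norm{u_h-\tilde{u}_h}$ by $\frac{\varepsilon_2}{1-\varepsilon_2}\norm{\tilde{u}_h}$ before concluding with the triangle inequality. Your version is in fact slightly more explicit than the paper's in justifying $\abs{i^*\tilde{u}_h}\geq 1-\varepsilon_2>0$, but the argument is identical.
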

\begin{proof}
Set $u_h=\tilde{u}_h/i^*\tilde{u}_h$. Then,
\begin{align*}
\norm{Q_h u_h - u_h} &= \frac{\norm{Q_h \tilde{u}_h - \tilde{u}_h}}{i^*\tilde{u}_h} \leq \frac{\varepsilon_1}{1-\varepsilon_2}, \\
\norm{u_h - \tilde{u}_h} &\leq  \norm{\frac{1-i^*\tilde{u}_h}{i^*\tilde{u}_h} \tilde{u}_h} \leq \frac{\varepsilon_2}{1-\varepsilon_2} \norm{\tilde{u}_h}.
\end{align*}
Combining these two bounds with~\eqref{theorem1_estimate} and the first point of Lemma~\ref{lem:onembound} gives the desired result.
\end{proof}

\subsection{Bounding norms of powers computationally}
In this section, we describe a computational procedure to obtain rigorous bounds of the form $\norm{Q_h^k|_{\mathcal{U}^0_h}} \leq C_k$ in practice on a computer. We start by recalling one important notation convention we stated in Notation \ref{not:general}
\begin{notation}
The symbol $\norm{f}_{L^p}$ denotes the $L^p$ norm of a function (usually defined on $[0,1]$), whereas the symbol $\norm{v}_{\ell^p}$ denotes the $\ell^p$ norm of a vector $v\in\mathbb{R}^n$.
\end{notation}

In the Ulam projection, since the `continuous' norms $\norm{\cdot}_{L^1}$ and the `discrete' norm $\norm{\cdot}_{\ell^1}$ differ only by a constant (see~\eqref{L1l1}), we have $\norm{Q_h^k|_{\mathcal{U}_h^0}}_{L^1} = \norm{Q_h^k|_{\mathcal{U}_h^0}}_{\ell^1}$, and similarly for $L^\infty$ and $\ell^{\infty}$ in the piecewise linear projection. Hence we can replace these norms with matrix norms for which there are classical formulas
\begin{equation} \label{eq:classicalnorms}
	\norm{M}_{\ell^1} = \max_{i} \sum_j \abs{M_{ij}}, \quad \norm{M}_{\ell^{\infty}} = \max_{j} \sum_i \abs{M_{ij}}.
\end{equation}
However, even after reducing to a discrete setting, computing matrix norms restricted to a certain subspace $\mathcal{U}_h^0$ is not a textbook problem. The following bound allows one to solve it.



\begin{lemma} \label{matrixnormlemma}
Let
\[
U = \begin{bmatrix}
    1 & 1 &  \cdots & 1\\
    -1 & 0 & \cdots & 0\\
    0& -1 & \ddots & \vdots\\
    \vdots& \ddots& \ddots& 0\\
    0 & \cdots & 0 & -1
\end{bmatrix} = \begin{bmatrix}
    e^*\\
    -I
\end{bmatrix} \in \mathbb{R}^{n \times (n-1)},
\]
and $\mathcal{U}_h^0 = \ker([1,1,\dots,1]^*) = \operatorname{Im} U$.
Then, for each $M\in \mathbb{R}^{n\times n}$ and each $\ell^p$ norm one has $\norm{M|_{\mathcal{U}_h^0}} \leq \norm{MU}$.
\end{lemma}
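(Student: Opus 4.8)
The plan is to parametrize the subspace $\mathcal{U}_h^0$ by the columns of $U$, thereby rewriting the constrained operator norm $\norm{M|_{\mathcal{U}_h^0}}$ as an ordinary operator norm of the product $MU$; the whole argument then collapses to a single comparison between $\norm{Uw}$ and $\norm{w}$, which is where the specific block structure of $U$ is used.

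First I would record that $U$ has full column rank (its lower block is $-I$), so $w \mapsto Uw$ is a bijection from $\mathbb{R}^{n-1}\setminus\{0\}$ onto $\mathcal{U}_h^0\setminus\{0\} = (\operatorname{Im} U)\setminus\{0\}$. Consequently every nonzero $v\in\mathcal{U}_h^0$ is of the form $v=Uw$ with $w\neq 0$, and the restricted operator norm can be written as
\[
\norm{M|_{\mathcal{U}_h^0}} = \sup_{w \neq 0} \frac{\norm{MUw}}{\norm{Uw}}.
\]
I would then place this next to $\norm{MU} = \sup_{w \neq 0} \frac{\norm{MUw}}{\norm{w}}$, so that the claimed inequality reduces to the termwise estimate $\frac{\norm{MUw}}{\norm{Uw}} \leq \frac{\norm{MUw}}{\norm{w}}$, i.e.\ to proving $\norm{w} \leq \norm{Uw}$ for every $w\in\mathbb{R}^{n-1}$.

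The key step, and really the only thing that needs verifying, is this last norm comparison. It follows directly from the structure $U = \begin{bsmallmatrix} e^* \\ -I \end{bsmallmatrix}$: the lower $n-1$ entries of $Uw$ are exactly $-w$, while the top entry is $e^* w = \sum_i w_i$. Hence for any $\ell^p$ norm with $p<\infty$,
\[
\norm{Uw}_{\ell^p}^p = \abs*{\sum_i w_i}^p + \sum_i \abs{w_i}^p \geq \sum_i \abs{w_i}^p = \norm{w}_{\ell^p}^p,
\]
and for $p=\infty$, $\norm{Uw}_{\ell^\infty} = \max\!\big(\abs*{\sum_i w_i},\, \norm{w}_{\ell^\infty}\big) \geq \norm{w}_{\ell^\infty}$; in every case $\norm{Uw} \geq \norm{w}$. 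Taking the supremum over $w\neq 0$ then yields $\norm{M|_{\mathcal{U}_h^0}} \leq \norm{MU}$.

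I do not expect a genuine obstacle here: once one recognizes that restricting the domain to $\operatorname{Im} U$ and dividing by $\norm{Uw}$ can only shrink the ratio relative to dividing by $\norm{w}$, the statement is a one-line observation. The only point requiring a modicum of care is confirming $\norm{Uw}\geq\norm{w}$ \emph{uniformly} across all $\ell^p$ norms, which is precisely where the $-I$ block of $U$ enters and which I would handle by the case split on $p$ above.
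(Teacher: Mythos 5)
Your proposal is correct and follows essentially the same route as the paper: parametrize $\mathcal{U}_h^0$ via $U$, rewrite the restricted norm as $\sup_{w\neq 0}\norm{MUw}/\norm{Uw}$, and reduce to $\norm{Uw}\geq\norm{w}$, which both arguments read off from the block structure of $U$. Your explicit case split over $p<\infty$ and $p=\infty$ merely spells out what the paper leaves implicit.
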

\begin{proof}\relax
We have for each $z\in\mathbb{R}^{n-1}$
\[
\norm{Uz} = \norm*{
\begin{bmatrix}
    z_1 + z_2 + \dots + z_{n-1}\\
	-z_1\\
	-z_2\\
	\vdots\\
	-z_{n-1}
\end{bmatrix}
} \geq \norm{z}.
\]
Moreover,
\[
\norm{M|_{\mathcal{U}^0_h}} = \sup_{x \in \mathcal{U}_h^0 \setminus \{0\}} \frac{\norm{Mx}}{\norm{x}} = \sup_{z \in \mathbb{R}^{n-1} \setminus \{0\}} \frac{\norm{MUz}}{\norm{Uz}} \leq \sup_{z \in \mathbb{R}^{n-1} \setminus \{0\}} \frac{\norm{MUz}}{\norm{z}} = \norm{MU}.
\]
\end{proof}


\begin{remark}
Note that $\norm{Uz}_{\ell^1} \leq 2\norm{z}_{\ell^1}$ and $\norm{Uz}_{\ell^\infty} \leq (n-1)\norm{z}_{\ell^\infty}$, so this bound is off by at most a factor $2$ in the $\ell^1$ norm and by at most a factor $n-1$ in the $\ell^\infty$ norm.
\end{remark}

\begin{remark}
For a generic projection, an analogous procedure can be devised. Let $U\in\mathbb{R}^{n\times (n-1)}$ be a matrix whose columns are a basis of $\mathcal{U}_h^0=\ker i^*$, and suppose that
$||Uz|| \geq \alpha||z||$.
Then, by the same reasoning, we have that
\[
\norm{M|_{\mathcal{U}^0_h}} = \sup_{x \in \mathcal{U}_h^0 \setminus \{0\}} \frac{\norm{Mx}}{\norm{x}} = \sup_{z \in \mathbb{R}^{n-1} \setminus \{0\}} \frac{\norm{MUz}}{\norm{Uz}} \leq \sup_{z \in \mathbb{R}^{n-1} \setminus \{0\}} \frac{\norm{MUz}}{\alpha \norm{z}} = \frac{1}{\alpha} \norm{MU}.
\]

An estimate for $\alpha$ can be obtained automatically for any norm $||.||$ for which we know explicit constants $c,C$ such that
\[
c||z||_{\ell^2}\leq ||z||\leq C||z||_{\ell^2},
\]
using a rigorous estimate for
$$\sigma_{\min} = \min(||Uv||_{\ell^2}/||v||_{\ell^2})$$
obtained from the SVD decomposition of $U$, using techniques to rigorously
certify eigenvalues as in \cite{Miyajima}. 

Therefore
\[
||Uz||\geq c||Uz||_{\ell^2}\geq c\eta ||z||_{\ell^2}\geq c\sigma_{\min} C ||z||.
\]
\end{remark}

\begin{remark}
In the case of a more general weak norm $\norm{\cdot}$, we can reduce the problem to the computation
of the $\ell^1$ and $\ell^{\infty}$ norms of the operator.
To do so, we need three estimates
\[
\norm{v}\leq W_1\norm{v}_{\ell^1}+W_2\norm{v}_{\ell^\infty}
\]
and
\[
\norm{v}_{\ell^1}\leq \alpha_1\norm{v}, \quad \norm{v}_{\ell^\infty}\leq \alpha_{\infty} \norm{v},
\]
which imply
\[
\norm{P}\leq \frac{W_1}{\alpha_1}\norm{P}_{\ell^1}+\frac{W_2}{\alpha_2}\norm{P}_{\ell^\infty}.
\]
\end{remark}

\begin{remark}
There is some linear algebra literature on fast estimation of matrix norms, for instance~\cite{higham-norms}, but unfortunately we cannot use it here. Indeed, these estimators return only a guaranteed \emph{lower} bound $C \leq \norm{M}$. Providing a lower bound is a simpler problem, since it is sufficient to show that $\norm{Mx}\geq C$ for a suitable norm-1 vector $x$; giving a rigorous upper bound, instead, requires proving that $\norm{Mx} \leq C$ for \emph{all} norm-1 vectors.
\end{remark}

\subsection{Handling machine arithmetic errors when bounding norms}\label{subsec:boundmachine}

In principle, one can obtain a rigorous estimate for $\norm{Q_h|_{\mathcal{U}_h^0}}$ from the results in the previous section by computing $\norm{\mathbf{L}U}$ using interval arithmetic; however, matrix-vector products in interval arithmetic may be slow (as was the case for our computational environment), so we describe here an alternative procedure in which the matrix-vector products are computed using floating-point arithmetic: we replace $L_h$ with the floating-point matrix $M = \operatorname{mid}(\mathbf{L})$, and keep track of the error directly, in a sort of normwise ball arithmetic, bounding the error with $\delta = \norm{\operatorname{rad}(\mathbf{L})}$. We work out the required bounds in this section, for both the $\ell^1$ norm (used in the Ulam projection) and the $\ell^\infty$ norm (used in the piecewise linear projection). We first need to bound the computational error produced by products with $M$.
\begin{lemma} \label{lem:inexactmatprod}
Given $M\in\mathbb{R}^{n\times n}$ and $v\in\mathbb{R}^{n}$, let $\tilde{w} = \mathsf{fl}(Mv)$ be the vector obtained by evaluating the product $w = Mv$ in an inexact floating-point arithmetic system with machine precision $\mathsf{u}$. Then, for both norms $\norm{\cdot}_{\ell^1}$ and $\norm{\cdot}_{\ell^\infty}$, it holds that
\[
\norm{\tilde{w} - w} \leq \gamma_z \norm{M}\norm{v},
\]
where $\gamma_z := \frac{z\mathsf{u}}{1-z\mathsf{u}}$, and $z$ is the maximum number of nonzero entries in a row of $M$.
\end{lemma}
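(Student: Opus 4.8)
The plan is to reduce the estimate to a componentwise rounding analysis and then pass to the two vector norms. I would start from the standard model of floating-point arithmetic, in which every elementary operation is computed with a bounded relative error, so that $\mathsf{fl}(a\circ b) = (a\circ b)(1+\delta)$ with $\abs{\delta}\le\mathsf{u}$ for $\circ\in\{+,\times\}$. The key classical ingredient is the forward error bound for a computed inner product: if $s=\sum_{k=1}^{z} a_k b_k$ is accumulated by the usual sequence of $z$ multiplications and $z-1$ additions, then the computed value satisfies $\mathsf{fl}(s)=\sum_{k=1}^{z} a_k b_k(1+\theta_k)$ with $\abs{\theta_k}\le\gamma_z$, where $\gamma_z=\frac{z\mathsf{u}}{1-z\mathsf{u}}$. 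I would invoke this as a known result; it rests on the auxiliary fact that a product of at most $z$ factors $(1+\delta_k)^{\pm1}$, each with $\abs{\delta_k}\le\mathsf{u}$, can be written as $1+\theta$ with $\abs{\theta}\le\gamma_z$, which I would cite rather than reprove.

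Next I would apply this to each component $\tilde{w}_i=\mathsf{fl}\big(\sum_j M_{ij}v_j\big)$. Since $M$ has at most $z$ nonzero entries in each row, the sum defining $\tilde{w}_i$ has at most $z$ nonzero terms, and the inner-product bound gives $\tilde{w}_i-w_i=\sum_j M_{ij}v_j\,\theta_{ij}$ with $\abs{\theta_{ij}}\le\gamma_{z_i}\le\gamma_z$, where $z_i$ is the number of nonzeros in row $i$ and I use that $\gamma$ is increasing in its index when $z\mathsf{u}<1$. Taking absolute values yields the componentwise estimate $\abs{\tilde{w}_i-w_i}\le\gamma_z\sum_j\abs{M_{ij}}\,\abs{v_j}$, which is the crux of the argument; everything after this is a matter of summation.

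It then remains to pass to the two norms. For the $\ell^\infty$ norm I would take the maximum over $i$, bound $\abs{v_j}\le\norm{v}_{\ell^\infty}$, and factor out the maximal absolute row sum $\max_i\sum_j\abs{M_{ij}}$, which is precisely the operator norm $\norm{M}$ induced by $\ell^\infty$. For the $\ell^1$ norm I would instead sum over $i$, exchange the order of summation to write $\sum_i\sum_j\abs{M_{ij}}\abs{v_j}$ as $\sum_j\abs{v_j}\sum_i\abs{M_{ij}}$, and factor out the maximal absolute column sum $\max_j\sum_i\abs{M_{ij}}$, the operator norm $\norm{M}$ induced by $\ell^1$; these are exactly the quantities appearing in~\eqref{eq:classicalnorms}. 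In both cases one obtains $\norm{\tilde{w}-w}\le\gamma_z\norm{M}\norm{v}$, as claimed.

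There is no deep obstacle here: the two summations are routine, and the only points requiring genuine care are the bookkeeping that collapses the accumulated relative errors into the single factor $\gamma_z$, and the correct matching of each vector norm to its induced operator matrix norm (row sums for $\ell^\infty$, column sums for $\ell^1$). I would also emphasize that the bound depends only on the number of nonzero terms $z$, and not on the order in which the inner product is accumulated, so it is exactly the sparsity of $M$ that keeps $\gamma_z$ small and makes the resulting normwise ball arithmetic cheap.
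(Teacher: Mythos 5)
Your proof is correct and follows essentially the same route as the paper, whose own proof simply cites the standard componentwise rounding analysis of inner products from Higham together with the observation that sparsity allows $\gamma_n$ to be replaced by $\gamma_z$; you have merely written out the details of that cited argument and the routine passage to the two induced norms. One small remark: your matching of the maximal absolute row sum to the $\ell^\infty$-induced norm and the maximal absolute column sum to the $\ell^1$-induced norm is the standard (correct) one, whereas the paper's equation~\eqref{eq:classicalnorms} appears to have the two formulas transposed.
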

\begin{proof}
This result follows from~\cite[Section~3.5]{higham}, after noting that for a sparse matrix we can replace $\gamma_n$ with $\gamma_z$, since each sum has at most $z$ terms (as already argued in~\cite{GaNi}).
\end{proof}

The main results used to bound the total error are the following. The simplest case is that of an $i$-preserving projection, for which $Q_h = L_h$.
\begin{lemma}
Let $\tilde{v}_0 = v_0\in \mathbb{R}^{n}$ be a given fixed vector, and let $M\in\mathbb{R}^{n\times n}$ be a matrix such that $\norm{L_h-M} \leq \delta$. For each $k=1,2,\dots$, let $\tilde{v}_{k+1} := \mathsf{fl}(M \tilde{v}_k)$ be the vector obtained by evaluating the product $M v_k$ in floating-point arithmetic, and let the sequence $\epsilon_k$ be defined recursively as
\begin{equation} \label{defepsilonk1}
    \epsilon_0 = 0, \quad \epsilon_{k+1} = \gamma_z \norm{M}\norm{\tilde{v}_k} + \delta \norm{\tilde{v}_k} + \norm{L_h} \epsilon_k.
\end{equation}
Then,
\[
\norm{\tilde{v}_{k} - (L_h)^{k} v_0}_{\ell^1} \leq \epsilon_{k}, \quad k=0,1,2,\dots.
\]
\end{lemma}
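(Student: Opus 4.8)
The plan is to prove the error bound by induction on $k$, tracking two independent sources of error at each step: the arithmetic error incurred by evaluating the floating-point product $\mathsf{fl}(M\tilde v_k)$ instead of the exact product $M\tilde v_k$, and the accumulated error from the previous iterate $\tilde v_k$ versus $(L_h)^k v_0$. The base case $k=0$ is immediate since $\tilde v_0 = v_0$ gives $\norm{\tilde v_0 - (L_h)^0 v_0} = 0 = \epsilon_0$.

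For the inductive step, I would write the error at step $k+1$ as a telescoping decomposition and apply the triangle inequality:
\begin{align*}
\norm{\tilde v_{k+1} - (L_h)^{k+1}v_0}
&\leq \norm{\mathsf{fl}(M\tilde v_k) - M\tilde v_k}
 + \norm{M\tilde v_k - L_h \tilde v_k}\\
&\qquad + \norm{L_h \tilde v_k - L_h (L_h)^k v_0}.
\end{align*}
The first term is bounded by $\gamma_z \norm{M}\norm{\tilde v_k}$ using Lemma~\ref{lem:inexactmatprod} applied to the vector $\tilde v_k$. The second term is bounded by $\norm{L_h - M}\norm{\tilde v_k} \leq \delta \norm{\tilde v_k}$ by the hypothesis $\norm{L_h - M}\leq \delta$. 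The third term factors as $\norm{L_h(\tilde v_k - (L_h)^k v_0)} \leq \norm{L_h}\,\norm{\tilde v_k - (L_h)^k v_0} \leq \norm{L_h}\epsilon_k$ by submultiplicativity of the operator norm and the inductive hypothesis. Summing these three contributions reproduces exactly the recursive definition~\eqref{defepsilonk1} of $\epsilon_{k+1}$, which closes the induction.

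I do not anticipate a serious obstacle here; the result is essentially a routine normwise forward-error analysis, and the recursion for $\epsilon_k$ has been set up precisely so that the three error terms match its three summands. The only point requiring mild care is the bookkeeping in the telescoping step: one must insert the exact product $M\tilde v_k$ and the exact application $L_h\tilde v_k$ as intermediate quantities so that each of the three resulting differences is of a form directly controlled by a hypothesis (the floating-point lemma, the bound $\delta$, and the inductive estimate respectively). It is worth noting that the norm in question must be fixed throughout as either $\norm{\cdot}_{\ell^1}$ or $\norm{\cdot}_{\ell^\infty}$, since Lemma~\ref{lem:inexactmatprod} supplies the constant $\gamma_z$ for exactly these two norms, and that $\norm{\tilde v_k}$ (the computed iterate) rather than $\norm{(L_h)^k v_0}$ appears in the bound, which is what makes $\epsilon_k$ computable on the fly during the actual iteration.
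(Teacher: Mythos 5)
Your proof is correct and follows exactly the same route as the paper: induction with the three-term triangle-inequality decomposition through the intermediate quantities $M\tilde v_k$ and $L_h\tilde v_k$, each term bounded by Lemma~\ref{lem:inexactmatprod}, the hypothesis $\norm{L_h-M}\leq\delta$, and the inductive estimate respectively. No differences worth noting.
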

\begin{proof}
Arguing by induction, we have
\begin{align*}
\norm{\tilde{v}_{k+1}-L_h^{k+1}v_0} &\leq \norm{\tilde{v}_{k+1}- M\tilde{v}_k} + \norm{M\tilde{v}_k - L_h \tilde{v}_k} + \norm{L_h(\tilde{v}_k - L_h^k v_0)}\\
& \leq \gamma_z \norm{M}\norm{\tilde{v}_k} + \delta \norm{\tilde{v}_k} + \norm{L_h} \epsilon_k. \qedhere
\end{align*}
\end{proof}
Note that for the Ulam projection $\norm{L_h^U}_{\ell^1}=1$, so we can remove that factor.

If the projection is not $i$-preserving, the corresponding estimate for $Q_h$ is slightly more involved, because we have to keep track of the second summand in $Q_h = L_h  + ei^*(I - L_h)$. Let us introduce the matrix $N = I - ei^*$, so that
\[
Q_h v = NL_hv + ei^*v,
\]
and the second summand vanishes if $v\in\mathcal{U}_h^0$. This suggests that we can approximate the action of $Q_h$ with that of $NL_h$.
\begin{lemma}
Let $\tilde{v}_0 = v_0\in \mathcal{U}_h^0$ be a given fixed vector, and let $M\in\mathbb{R}^{n\times n}$ be a matrix such that $\norm{L_h-M} \leq \delta$. For each $k=1,2,\dots$, let $\tilde{w}_{k+1} = \mathsf{fl}(M\tilde{v}_k)$ and
\begin{equation} \label{vkp1}
    \tilde{v}_{k+1} = \mathsf{fl}(N\tilde{w}_{k+1}) = \mathsf{fl}\left(\tilde{w}_{k+1} - e\left(\sum_{i=1}^n f_i (\tilde{w}_{k+1})_i\right)\right)
\end{equation}
be the vectors obtained by approximating $Q_h \tilde{v}_k$ in floating-point arithmetic, and let the sequence $\epsilon_k$ be defined recursively as
\begin{align} \label{defepsilonkinf}
    \epsilon_0 &= 0,\nonumber\\ \quad \epsilon_{k+1} &=  \gamma_{n+2}\norm{\begin{bmatrix}
    I & -ei^*
\end{bmatrix}}\left(\norm{w_{k+1}}+ \norm{ei^*}\norm{\tilde{w}_k}\right)\nonumber\\ &+ \norm{N} (\gamma_z \norm{M} + \delta)\norm{\tilde{v}_k} + \norm{Q_h}\epsilon_k.
\end{align}
Then,
\[
\norm{\tilde{v}_{k} - Q_h^{k} v_0}_{\ell^\infty} \leq \epsilon_{k}, \quad k=0,1,2,\dots.
\]
\end{lemma}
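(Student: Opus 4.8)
The plan is to argue by induction on $k$, mirroring the proof of the $i$-preserving case but carrying the extra correction term coming from $N = I - ei^*$. The base case $k=0$ is immediate, since $\epsilon_0 = 0$ and $\tilde{v}_0 = v_0 = Q_h^0 v_0$. Before the inductive step I would record the algebraic identities that make the correction harmless on the relevant subspace. From Assumption~\ref{assumption:space}, item~6, we have $i^* e = i(1) = 1$, whence $Ne = e - e(i^*e) = 0$, $i^* N = i^* - (i^*e) i^* = 0$, and $N^2 = N$. Rewriting the definition of $Q_h$ in matrix form (Definition~\ref{def:discretized_operators}) gives $Q_h = N L_h + e i^*$, so that $i^* Q_h = i^*$ and therefore $\mathcal{U}_h^0$ is $Q_h$-invariant; since $v_0 \in \mathcal{U}_h^0$ this yields $Q_h^k v_0 \in \mathcal{U}_h^0$ for all $k$, and $Q_h^{k+1} v_0 = N L_h (Q_h^k v_0)$ because the summand $e\, i^*(Q_h^k v_0)$ vanishes. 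Finally $N Q_h = N L_h$, i.e.\ $N L_h$ and $Q_h$ coincide after projection by $N$.

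For the inductive step I would split $\tilde{v}_{k+1} - Q_h^{k+1} v_0$ by inserting the three intermediate vectors $N \tilde{w}_{k+1}$, $N w_{k+1}$ (with $w_{k+1} = M\tilde v_k$ the exact product), and $N L_h \tilde v_k$, and estimate the four resulting pieces. The piece $N w_{k+1} - N L_h \tilde v_k = N(M - L_h)\tilde v_k$ is bounded by $\norm{N}\,\delta\,\norm{\tilde v_k}$ using $\norm{L_h - M}\leq \delta$; the piece $N\tilde w_{k+1} - N w_{k+1} = N(\tilde w_{k+1} - w_{k+1})$ is bounded, via Lemma~\ref{lem:inexactmatprod} applied to the sparse product $M\tilde v_k$, by $\norm{N}\,\gamma_z\norm{M}\,\norm{\tilde v_k}$; together these give the summand $\norm{N}(\gamma_z\norm{M}+\delta)\norm{\tilde v_k}$. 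The propagated piece $N L_h \tilde v_k - N L_h Q_h^k v_0 = N L_h(\tilde v_k - Q_h^k v_0)$ is handled by the induction hypothesis $\norm{\tilde v_k - Q_h^k v_0}\leq \epsilon_k$ together with the identity $N L_h = N Q_h$: since both $Q_h^k v_0$ and (up to rounding, because $\tilde v_k$ was produced by an application of $N$) the iterate $\tilde v_k$ lie in $\mathcal{U}_h^0$, on this subspace $N L_h$ acts as $Q_h$, producing the clean factor $\norm{Q_h}\epsilon_k$.

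The remaining piece $\tilde{v}_{k+1} - N \tilde{w}_{k+1}$ is the floating-point error of the correction step itself, and produces the term with $\gamma_{n+2}$. Here I would view the map $v \mapsto Nv = v - e(i^* v)$ as the exact product of the block matrix $\begin{bmatrix} I & -e i^* \end{bmatrix}$ with the stacked vector $\begin{bmatrix} v \\ v \end{bmatrix}$, so that the componentwise computation in~\eqref{vkp1} is a sparse matrix–vector product whose rows contain the single entry from $I$ together with the $n$ entries of $-ei^*$, plus two further elementary operations. Applying the bound of Lemma~\ref{lem:inexactmatprod} with $n+2$ in place of $z$ then gives the first summand of the recursion~\eqref{defepsilonkinf}, its two contributions arising from the two blocks $I$ and $-ei^*$ of the stacked input. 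Collecting the four estimates and comparing with~\eqref{defepsilonkinf} closes the induction.

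The main obstacle I anticipate is bookkeeping rather than conceptual: one must check that the propagation constant is the clean $\norm{Q_h}$ and not $\norm{N}\norm{Q_h}$. This hinges on the observation that each computed iterate $\tilde v_k$ is the image of an application of $N$ and hence lies in $\ker i^* = \mathcal{U}_h^0$ up to a purely rounding-level defect $i^*\tilde v_k$, so that replacing $N L_h$ by $Q_h$ on the error is legitimate to the order at which we work, the discrepancy $e\,i^*(\tilde v_k - Q_h^k v_0)$ being of higher order and absorbed into the $\gamma$ terms. The second delicate point is the correct operation count ($n+2$) in the sparse product realizing $N$, which fixes the index of the $\gamma$ factor.
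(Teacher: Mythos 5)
Your proposal is correct and follows essentially the same route as the paper: the same telescoping decomposition through $N\tilde{w}_{k+1}$, $Nw_{k+1}$, $NL_h\tilde v_k$, the same use of Lemma~\ref{lem:inexactmatprod} with the $\gamma_{n+2}$ count for the correction step, and the same key observation that $i^*N=0$ reduces the non-$i$-preservation defect $e\,i^*\tilde v_k = e\,i^*(\tilde v_k - N\tilde w_k)$ to a rounding-level quantity bounded by $\norm{ei^*}\,\gamma_{n+2}\norm{\begin{bmatrix}I & -ei^*\end{bmatrix}}\norm{\tilde w_k}$. One small bookkeeping correction: the two contributions in the first summand of~\eqref{defepsilonkinf} do not come from ``the two blocks of the stacked input'' in a single rounding analysis, but from two different sources at two different steps --- the rounding of $N\tilde w_{k+1}$ at step $k+1$ and the defect $e\,i^*\tilde v_k$ inherited from step $k$ --- which is exactly the accounting you give (correctly) in your final paragraph.
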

\begin{proof}
Standard forward error analysis of the formula~\eqref{vkp1} gives
\[
\norm*{\tilde{v}_{k+1} - N\tilde{w}_{k+1}} \leq \norm{\begin{bmatrix}
    I & -ei^*
\end{bmatrix}}\gamma_{n+2} \norm{\tilde{w}_{k+1}}.
\]
This bound is essentially the same that would follow from applying Lemma~\ref{lem:inexactmatprod} to the product $\tilde{v}_{k+1} = \begin{bmatrix}
    I & -ei^*
\end{bmatrix} \tilde{w}_k$, only with $\gamma_{n+2}$ instead of $\gamma_{n+1}$ because forming the products in $e i^*$ could in principle introduce another relative error of the magnitude of the machine precision. (Note that this additional error term can be omitted in the case of the piecewise linear discretization, since $e$ is the vector of all ones and products with its entries are exact.)

Moreover,
\begin{align*}
\norm{(NL_h - Q_h)\tilde{v}_k} &= \norm{ei^*\tilde{v}_{k}}\\ &= \norm{ei^*(\tilde{v}_{k}-N\tilde{w}_k)} \\&\leq \norm{ei^*}\norm{\tilde{v}_{k} - N\tilde{w}_{k}} \\&\leq \norm{ei^*} \norm{\begin{bmatrix}
    I & -ei^*
\end{bmatrix}}\gamma_{n+2} \norm{\tilde{w}_{k}}.
\end{align*}
Once we have established these bounds, we can conclude with the triangle inequality:
\begin{align*}
\norm{\tilde{v}_{k+1}-Q_h^{k+1}v_0} & \leq \norm{\tilde{v}_{k+1}-N\tilde{w}_{k+1}} + \norm{N(\tilde{w}_{k+1}-M\tilde{v}_k)} + \norm{N(M-L_h)\tilde{v}_k} \\
& \quad + \norm{(NL_h - Q_h)\tilde{v}_k} + \norm{Q_h(\tilde{v}_k - Q_h^{k}v_0)}\\
& \leq \norm{\begin{bmatrix}
    I & -ei^*
\end{bmatrix}}\gamma_{n+2}\norm{w_{k+1}} + \norm{N} \gamma_z \norm{M}\norm{v_k} + \norm{N}\delta \norm{v_k} \\
& \quad  + \norm{ei^*} \norm{\begin{bmatrix}
    I & -ei^*
\end{bmatrix}}\gamma_{n+2}\norm{w_k} + \norm{Q_h}\epsilon_k.
\end{align*}
\end{proof}
In the case of the piecewise linear projection, $\norm{ei^*}_{\ell^\infty}=1$ and
$\norm{\begin{bmatrix}
    I & -ei^*
\end{bmatrix}}_{\ell^\infty} = \norm{N}_{\ell^\infty} = 2$.

All the norms appearing in these lemmas can be replaced with computable bounds from above. To obtain a bound for $Q_h$, we can use
\[
\norm{Q_h} = \norm*{M + (L_h-M) + e(i^*-i^*L_h)} \leq \norm{M} + \delta + \norm{e}\norm{i^* - i^*L_h}.
\]
A bound $\norm{i^* - i^*L_h} \leq \norm{i^* - i^*\mathbf{L}}$ can be computed with a single vector-matrix product performed in interval arithmetic. In practice this approach performed quite well in our examples, since $\norm{i^* - i^*\mathbf{L}}$ is quite small for all the experiments described in Section \ref{sec:9}.

A full algorithm, for both the $\ell^1$ and $\ell^\infty$ norms, is sketched in Algorithm~\ref{algo:powernorms}. If $M$ has at most $z$ nonzeros in each row, this computation requires $O(n^2zk_{\max})$ arithmetic operations.
\begin{algorithm}
\begin{algorithmic}[1]
\Function{norm1\_of\_powers}{$M$, $k_{\max}$}
\Require{$M=\operatorname{mid}(\mathbf{L})$, $\delta=\norm{\operatorname{rad}(\mathbf{L})}$}
\Ensure{bounds $C_k \geq \norm{(Q_h)^k |_{\mathcal{U}_h^0}}_{\ell^1}$ for $k=1,2,\dots,k_{\max}$}
\For{$k=1,\dots,k_{\max}$}
	\State $C_k \gets 1$
\EndFor
\For{$j=1,2,\dots,n-1$}
	\State $v \gets e_1 - e_{j+1}$;
	\For{$k=1,\dots,k_{\max}$}
		\State $v \gets Mv$ \label{row:error1}  \Comment{Rounding to nearest}
        \State $v \gets v - ei^*v$  \Comment{Skipped if $i$-preserving}
		\State $C_k \gets \max(C_k, \norm{v}_{\ell^1} + \epsilon_k)$ \label{row:error2} \Comment{Rounding up; $\epsilon_k$ as in~\eqref{defepsilonk1} or~\eqref{defepsilonkinf}}
	\EndFor
\EndFor
\EndFunction
\Function{norminf\_of\_powers}{$M$, $k_{\max}$}
\State\Comment{compute bounds $C_k \geq \norm{(Q_h)^k|_{\mathcal{U}_h^0}}_{\ell^\infty}$ for $k=1,2,\dots,k_{\max}$}
\For{$k=1,\dots,k_{\max}$, $i=1,\dots,n$}
	\State $S_{ik} \gets 0$
\EndFor
\For{$j=1,2,\dots,n-1$}
	\State $v \gets e_1 - e_{j+1}$;
	\For{$k=1,\dots,k_{\max}$}
		\State $v \gets Mv$ \label{row:error3} \Comment{Rounding to nearest}
        \State $v \gets v - ei^*v$  \Comment{Skipped if $i$-preserving}
		\State $S_{ik} \gets S_{ik} + \abs{v_i} + \epsilon_k$ \label{row:error4} \Comment{Rounding up; $\epsilon_k$ as in~\eqref{defepsilonk1} or~\eqref{defepsilonkinf}}
	\EndFor
\EndFor
\For{$k=1,\dots,m$}
	\State $C_k \gets \max_i S_{ik}$
\EndFor
\EndFunction
\end{algorithmic}
\caption{Algorithms to estimate norms of powers} \label{algo:powernorms}
\end{algorithm}

\begin{remark}
It follows from~\eqref{defepsilonkinf} that $\epsilon_{k+1} \geq \norm{Q_h}^k \epsilon_1$, i.e., in the non-$i$-preserving case the bounds grow by at least a factor $\norm{Q_h}$ at each iteration. A more careful analysis could be made to replace some terms $\norm{Q_h}^k$ with $\norm{Q_h^k}$; we have implemented that and combined it with the bounds~\eqref{RSbound}, but in the end we observed no practical advantage, since the bounds produced by~\eqref{RSbound} are much worse than $\norm{Q_h}^k$ for moderate values of $k$, see
Figures \ref{fig:normcompare1} and~\ref{fig:normcompare2}.
\end{remark}

\subsection{Aggregating norm bounds from various sources} \label{sec:aggregating}

Bounds on the form $\norm{Q_h^k|_{\mathcal{U}^0_h}} \leq C_k$ come from various sources, some \emph{a priori}, some requiring explicit computation:
\begin{enumerate}
  \item $\norm{Q_h^k|_{\mathcal{U}_h^0}} \leq \norm{Q_h^k} \leq \norm{Q_h}^k \leq (\norm{\mathbf{L}}+\norm{e}\norm{i^*-i^*\mathbf{L}})^k$, from basic norm properties. For the Ulam discretization, $\norm{Q_h}=1$, hence this bound is the constant 1. This norm is fast to compute, and effective for low values of $k$, but it will never get below $1$, as $\norm{Q_h}\geq 1$. 
  \item $\norm{Q_h^k|_{\mathcal{U}_h^0}} \leq \norm{Q_h^k} \leq S_1 R_{k,h,1} + S_2 R_{k,h,2}$, from~\eqref{RSbound}. For the Ulam discretization, $E=0$, hence this bound is once again the constant 1. This a-priori bound requires only the Lasota--Yorke inequality constants, but it is typically equal of worse than the other alternatives.
  \item $\norm{Q_h^k|_{\mathcal{U}_h^0}} \leq \min_{0< i< k}(C_i C_{k-i})$, which comes from the sub-multiplicativity of norms and the fact that $Q(\mathcal{U}_h^0) \subseteq \mathcal{U}_h^0$. This estimate is based on the bounds $C_i$ obtained for $i<k$ with the other methods, but once those are available it is cheap to compute and effective. It becomes useful only after bounds smaller than 1 have already been obtained for at least some $i<k$.
  \item[(4a)] computational estimates obtained with Algorithm~\ref{algo:powernorms}. These bounds can be poor for small values of $k$, but they are our only resource to get non-trivial bounds smaller than 1 in the first place. As their cost scales with $O(n^2)$, these can be computed effectively only for discretizations with moderate $n$.
  \item[(4b)] estimates obtained from a coarser grid using~\eqref{finebounds}. These bounds are an effective replacement of those in Item~4a when $n$ is large. Exactly like the bounds in Item~4a, these are typically poor for small values of $k$, but they are the key ingredient to achieve bounds smaller than 1 in the coarse-fine strategy.
\end{enumerate}
For each $k$, our upper bound $C_k$ is the minimum of the bounds coming from items (1)--(4a) (or (4b)). It is essential to use multiple sources of bounds: the bound in item (1) is effective for small values of $k$; the bound in item (4) is the only one that can go below 1, and the bound in (3) can be used to combine the other ones and extend them to larger values of $k$. We plot in Figures~\ref{fig:normcompare1} and~\ref{fig:normcompare2} the norm bounds obtained from all these sources, on two representative examples with the Ulam and piecewise linear discretizations.
\begin{figure}
\centering
\includegraphics[width=0.48\textwidth]{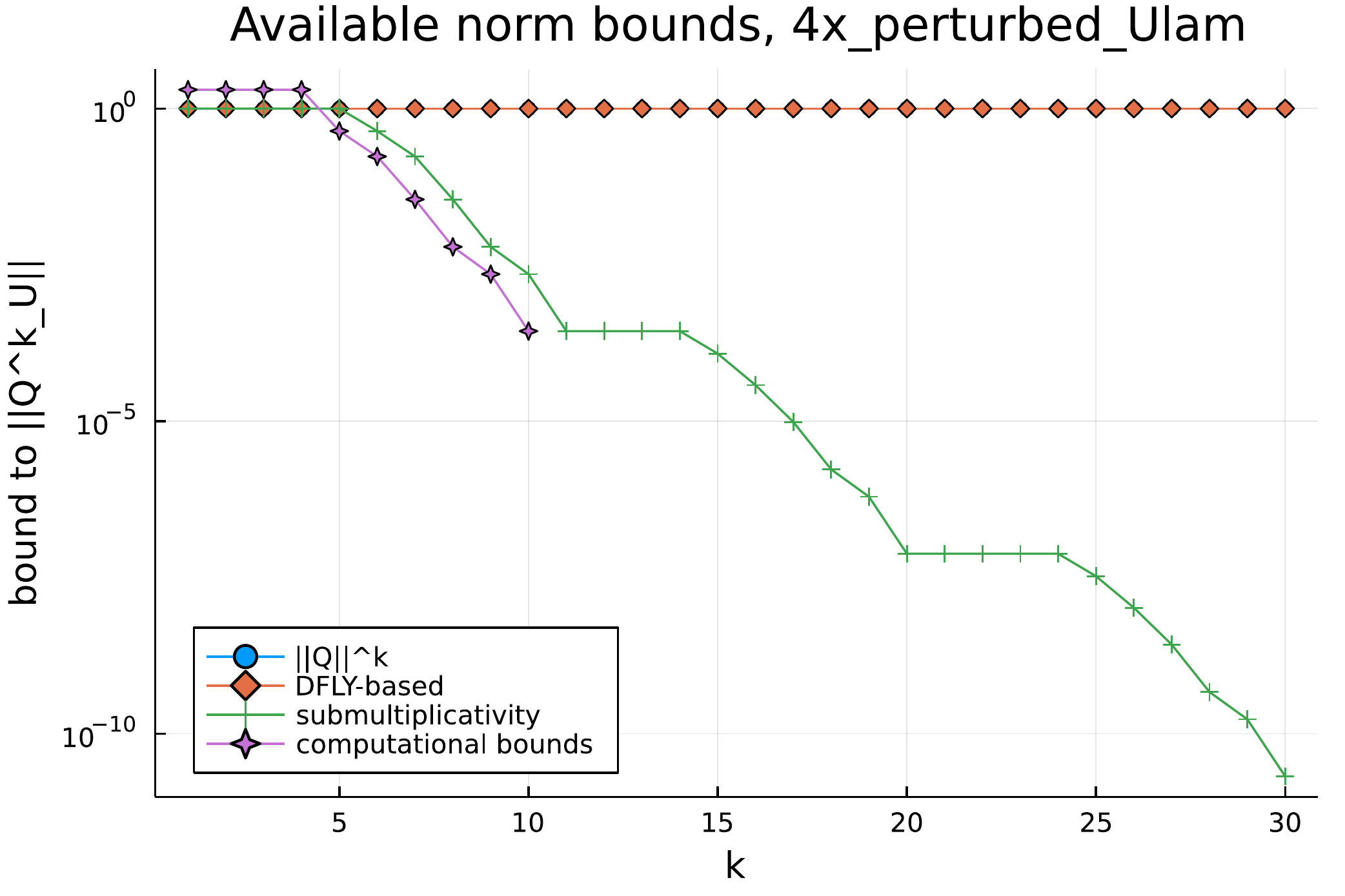}
\includegraphics[width=0.48\textwidth]{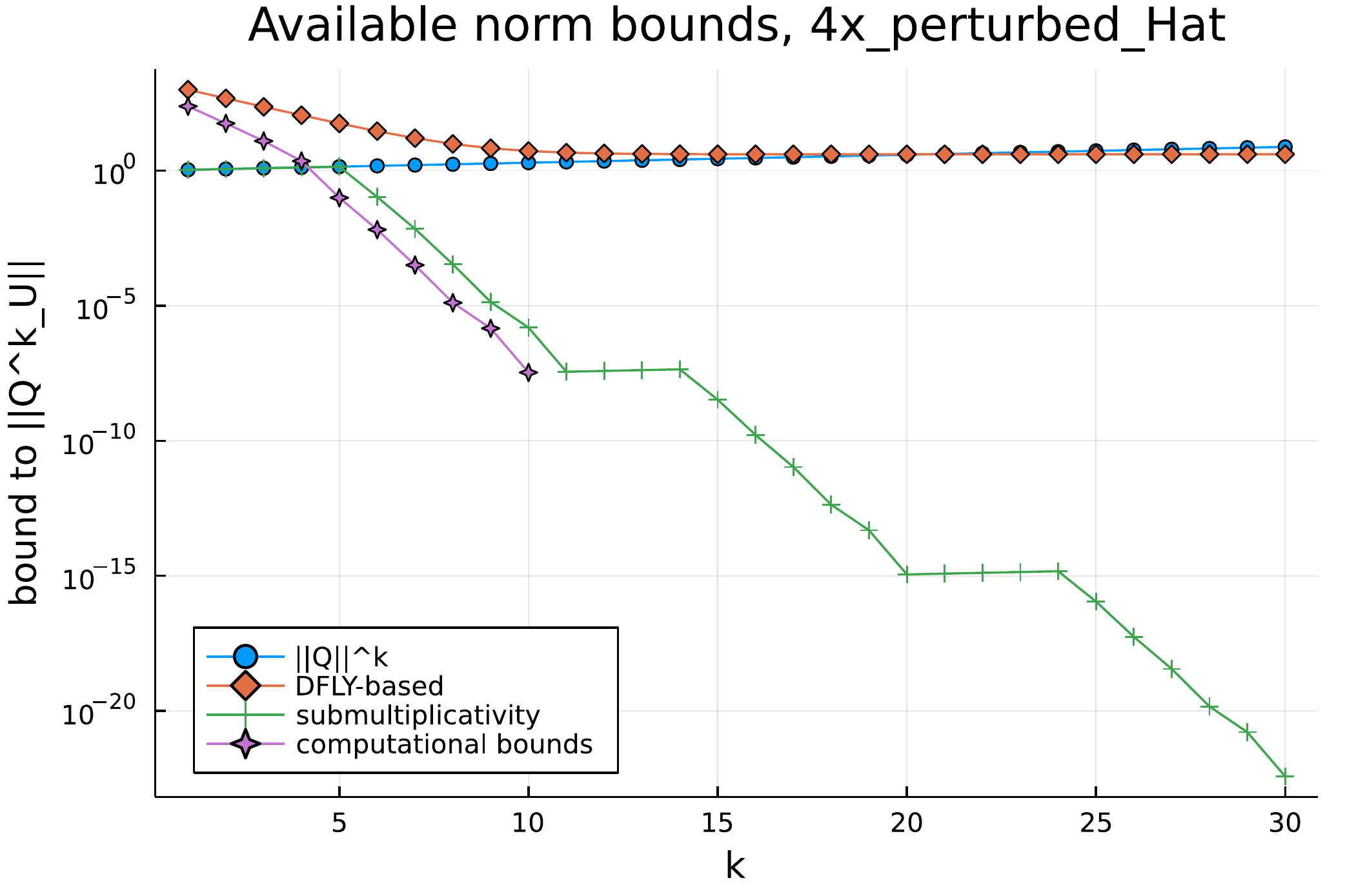}
\caption{Comparison of the norm bounds obtained from various sources for the Ulam (left) and piecewise linear (right) discretization of $T(x) = 4x + 0.01 \sin(8\pi x) \mod 1$ with $n=1024$; they are obtained with $k_{\max}=10$ computational norm bounds and do not rely on coarser grids.} \label{fig:normcompare1}
\end{figure}
\begin{figure}
\centering
\includegraphics[width=0.48\textwidth]{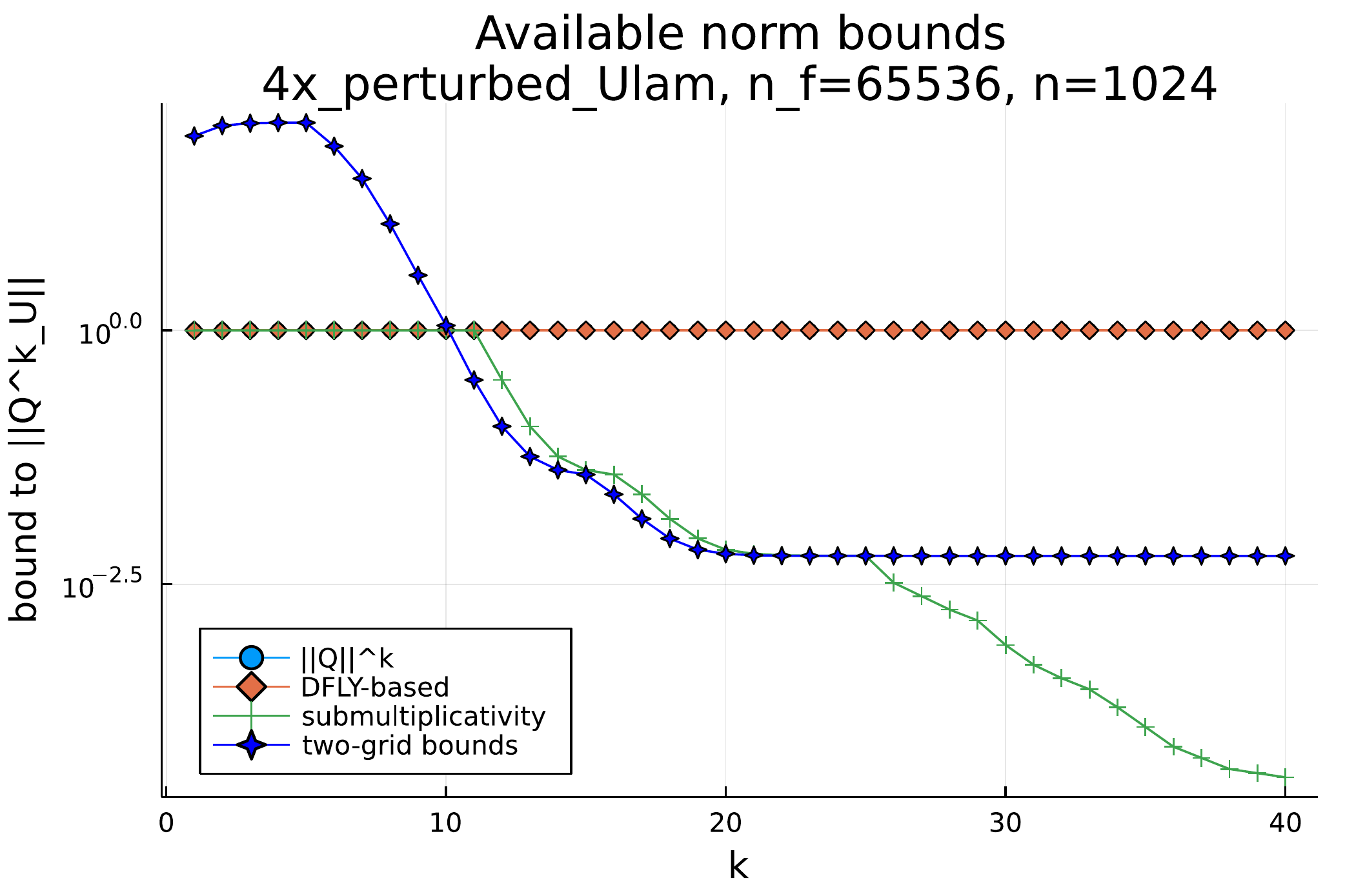}
\includegraphics[width=0.48\textwidth]{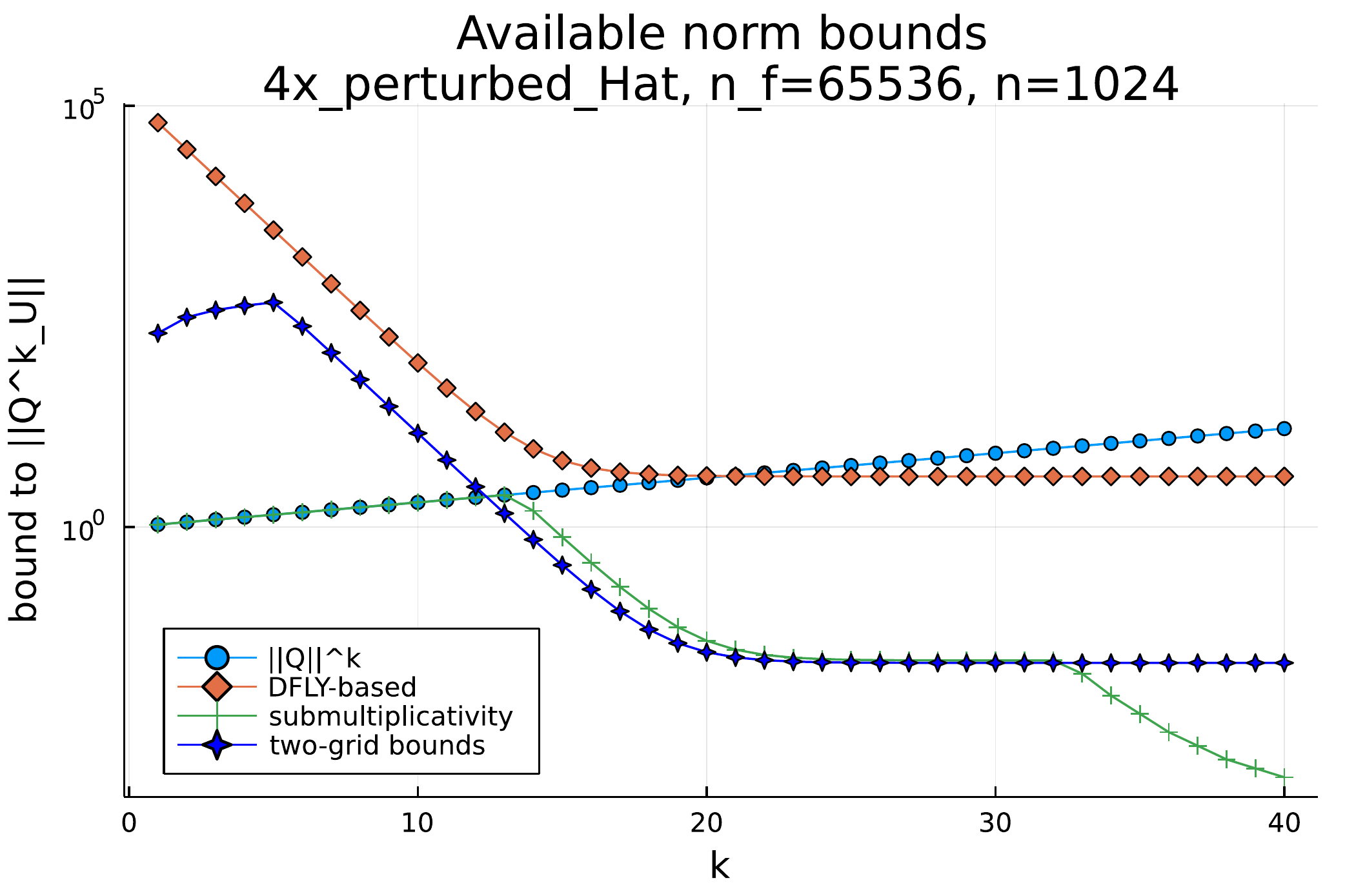}
\caption{Comparison of the norm bounds obtained from various sources for the Ulam (left) and piecewise linear (right) discretization of $T(x) = 4x + 0.01 \sin(8\pi x) \mod 1$ with $n_F=65536$; they are obtained from $k_{\max}=10$ computational norm bounds on the coarse grid with $n=1024$.} \label{fig:normcompare2}
\end{figure}

We note that the two-grid strategy is not guaranteed to succeed and yield a bound $C_m^F < 1$ for some $m$: in particular, when $n$ is too small (and $h$ too large), the second term in the right-hand side of~\eqref{finebounds} is greater than $1$ even for large values of $m$. An example is shown in Figure~\ref{fig:normcompare_lorenz}.
\begin{figure}
\centering
\includegraphics[width=0.48\textwidth]{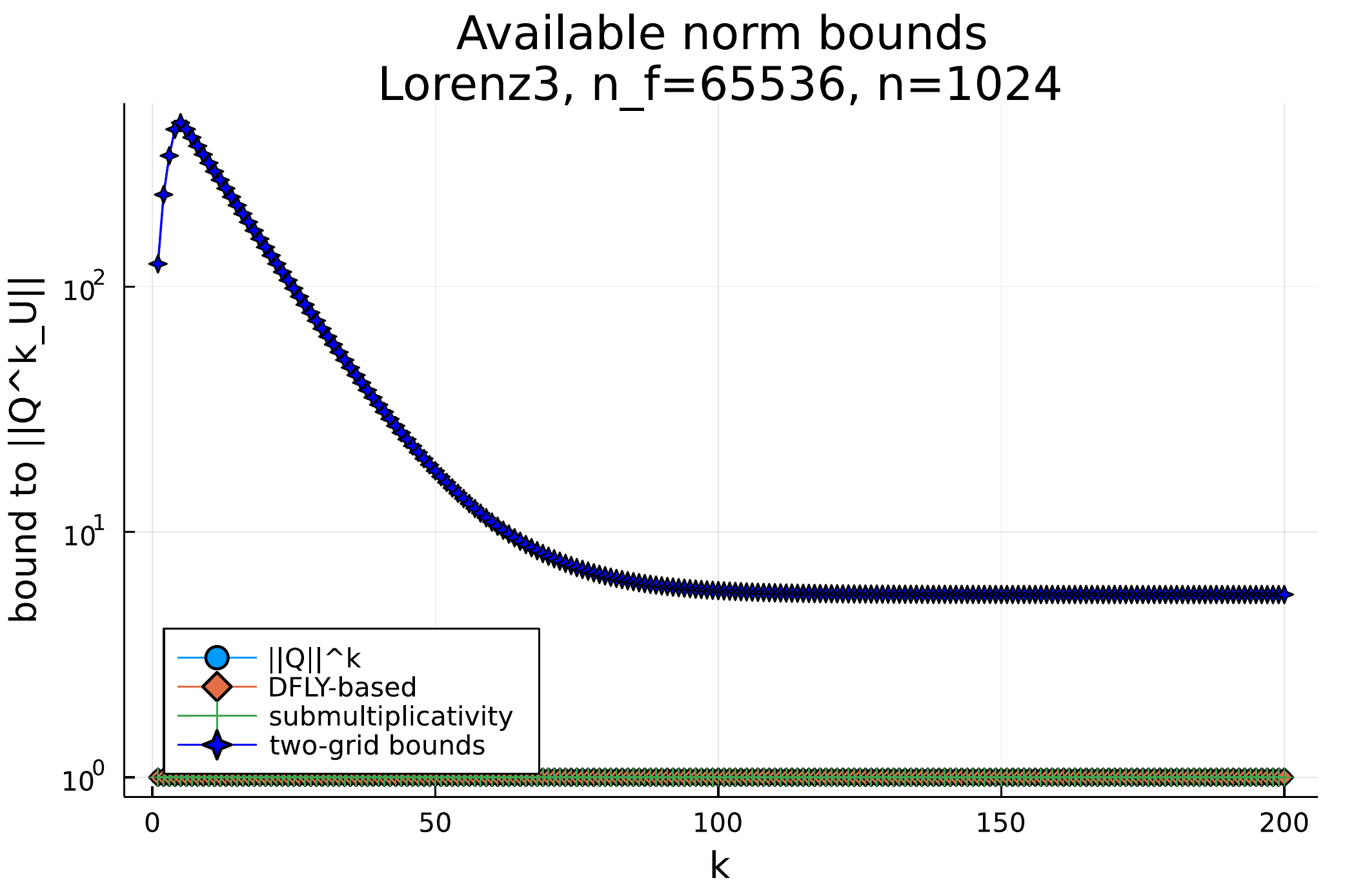}
\caption{Norm bounds obtained from various sources for the Ulam discretization of the third iterate of the Lorenz map~\eqref{lorenzmap} with $n_F=65536$; they are obtained from $k_{\max}=10$ computational norm bounds on the coarse grid with $n=1024$.} \label{fig:normcompare_lorenz}
\end{figure}

\subsection{The algorithms} \label{sec:steps}
 
Putting everything together, we can formulate the following algorithms. To compute a \emph{one-grid} bound for a dynamic using a discretization with $n$ equal intervals, we
\begin{enumerate}
  \item (DFLY coefficients) Compute the coefficients $A,B$ of the Lasota-Yorke inequality~\eqref{genericLY}. This computation requires finding rigorous bounds on the $T'$ and the distorsion $T''/(T')^2$ on each branch of the dynamic, via interval optimization. Its cost does not depend on the discretization size $n$.
  \item (matrix assembly) Construct an interval sparse matrix $\mathbf{L} \ni L_h$ with Algorithm~\ref{algo:assembly}. Its cost is $O(bn)$. \label{step:assembly}
  \item (eigenvalue computation) Compute an approximated eigenvector $Q_h \tilde{u}_h \approx \tilde{u}_h$ using the restarted Arnoldi method in machine arithmetic. Also compute rigorous bounds $\varepsilon_1 \geq \norm{Q_h\tilde{u}_h-\tilde{u}_h}$ and $\varepsilon_2 \geq \abs{i^*\tilde{u}_h-1}$ which will be needed in~\ref{cor:fixedpoint2}.
  \item (norms of powers) Compute norm bounds $\norm{Q_h^{k}|_{\mathcal{U}^0_h}} \leq C_k $ for $k=1,2,\dots,k_{\max}$, using Algorithm~\ref{algo:powernorms} to obtain some first computational bounds and the techniques in Section~\ref{sec:aggregating} to refine them. The value of $k_{\max}$ chosen must be sufficient to obtain $C_{k_{\max}} < 1$; if this inequality does not hold, we can repeat the computation with a larger value of $k_{\max}$. If we choose to multiply by $2$ the value of $k_{\max}$ at each restart, then the cost of this step is $O(n^2z k_{\max})$, with $z \sim b$ and $k_{\max} \sim \log n$ (by the arguments in Section~\ref{sec:works}). \label{step:norms}
  Assuming a constant number of iterations suffices, its cost is $O(nz)$.
  \item (error estimation) Using interval arithmetic or directed rounding to get rigorous bounds, compute the bound for $\norm{u-\tilde{u}_h}$ in~\ref{cor:fixedpoint2}. The cost for this step is merely $O(k_{\max})$, since $\varepsilon_1$ and $\varepsilon_2$ have already been computed.
\end{enumerate}
The computational cost of this algorithm scales as $O(n^2 \log n)$, seriously limiting its usefulness when large values of $n$ are required. To reduce the cost, we can compute instead a \emph{two-grid} bound as follows, using a coarse grid with $n_C$ equal intervals and a fine grid with $n_F$ equal intervals.
\begin{enumerate}
  \item (DFLY coefficients) Compute the coefficients $A,B$ of the Lasota-Yorke inequality~\eqref{genericLY}, as above.
  \item (coarse matrix+norms) Perform steps~\ref{step:assembly} and~\ref{step:norms} of the previous algorithm with $n = n_C$. The cost is $O(n_C^2 b \log n_C)$ as argued above.
  \item (matrix assembly) Construct $\mathbf{L} \ni L_{h_F}$ with Algorithm~\ref{algo:assembly}. Its cost is $O(bn_F)$.
  \item (eigenvalue computation) Compute an approximated eigenvector $Q_{h_F} \tilde{u}_{h_F} \approx \tilde{u}_{h_F}$, as well as $\varepsilon_1$ and $\varepsilon_2$ as above. This step costs $O(n_F z)$.
  \item (error estimation) Compute norm bounds $C_{k,F}$ using the techniques in Section~\ref{sec:aggregating}, with Step~4b instead of~4a, and use them to compute a bound for $\norm{u-\tilde{u}_{h_F}}$ using~\ref{cor:fixedpoint2} (with $n=n_F$). This step costs $O(k_{\max})$.
\end{enumerate}
The total cost depends quadratically on $n_C$, but only linearly on $n_F$. We shall see that this algorithm outperforms the one-grid strategy for suitable values of~$n_C$ and~$n_F$.

\section{Numerical experiments}\label{sec:9}

The proposed algorithm has been implemented in the Julia language for both the Ulam (Section~\ref{sec:Ulam}) and piecewise linear projection (Section~\ref{sec:PL}). Our code is available on~\url{https://github.com/JuliaDynamics/RigorousInvariantMeasures.jl}. The following numerical experiments have been performed with Julia 1.7.1 on an Imac i7-4790K \@ 4.00GHz.

\subsection{The Lanford map}
As a first experiment, we compute the invariant measure of 
\begin{equation} \label{lanford}
  T: [0,1] \to [0,1], \quad T(x) = 2x + \frac12x(1-x) \mod 1  
\end{equation}
with the Ulam projection. We tested both the one-grid described above, with various powers of 2 as the values of $n$, and the two-grid bound, with $n_C=1024$ and various powers of 2 as the values of $n_F$. We display in Figure~\ref{fig:Lanford-breakdown} the rigorous error bounds on $\norm{u - \tilde{u}_h}_{L^1}$ that have been proved, and a breakdown of how the CPU time is divided between the steps of each algorithm described in Section~\ref{sec:steps}.
\begin{figure}
\centering
\includegraphics[width=\textwidth]{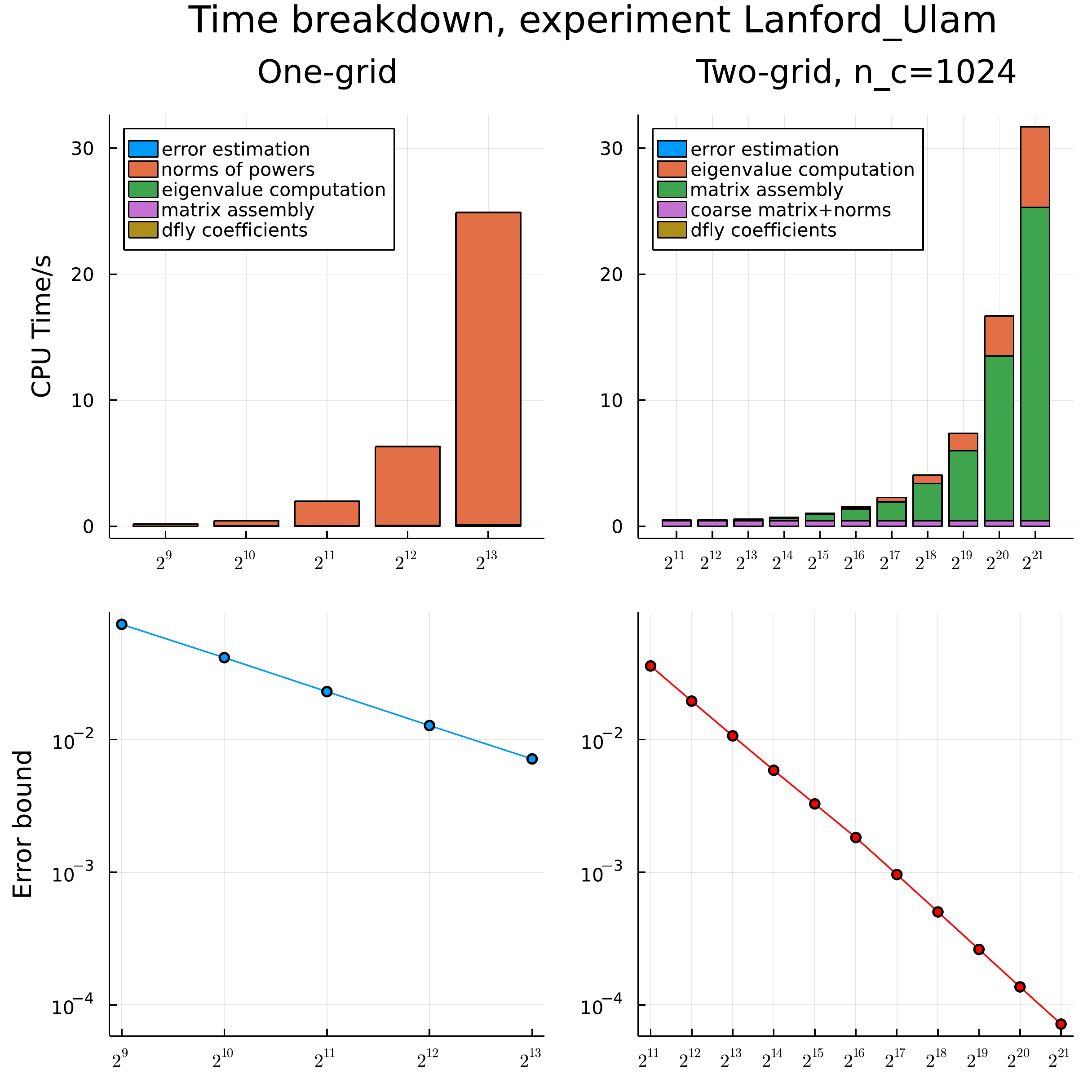}
\caption{Error bounds proved and CPU time breakdown along the five steps of the algorithms in Section~\ref{sec:steps} for the Lanford map~\eqref{lanford}.} \label{fig:Lanford-breakdown}
\end{figure}
Bounds on the same quantity have been computed in~\cite{GaNi}, but working on the iterate $T^2$ in place of $T$ was necessary there, because the inequality~\cite[Theorem~5.2]{GaNi} there is weaker than Theorem~\ref{thm:VarLY} here. The major innovation in this work is the two-grid strategy, which allows to prove bounds as small as $10^{-4}$ in less than one minute of CPU time. With the two-grid strategy (on the right), larger dimensions can be used, and the majority of time is spent assembling the matrix $\mathbf{L}_{h_F}$ and computing its fixed point vector.

A detailed analysis of the tradeoff between error bound and CPU time obtained with various choices of $n, n_C, n_F$ is shown in Figure~\ref{fig:Lanford-time}. \begin{figure}
\centering
\includegraphics[width=\textwidth]{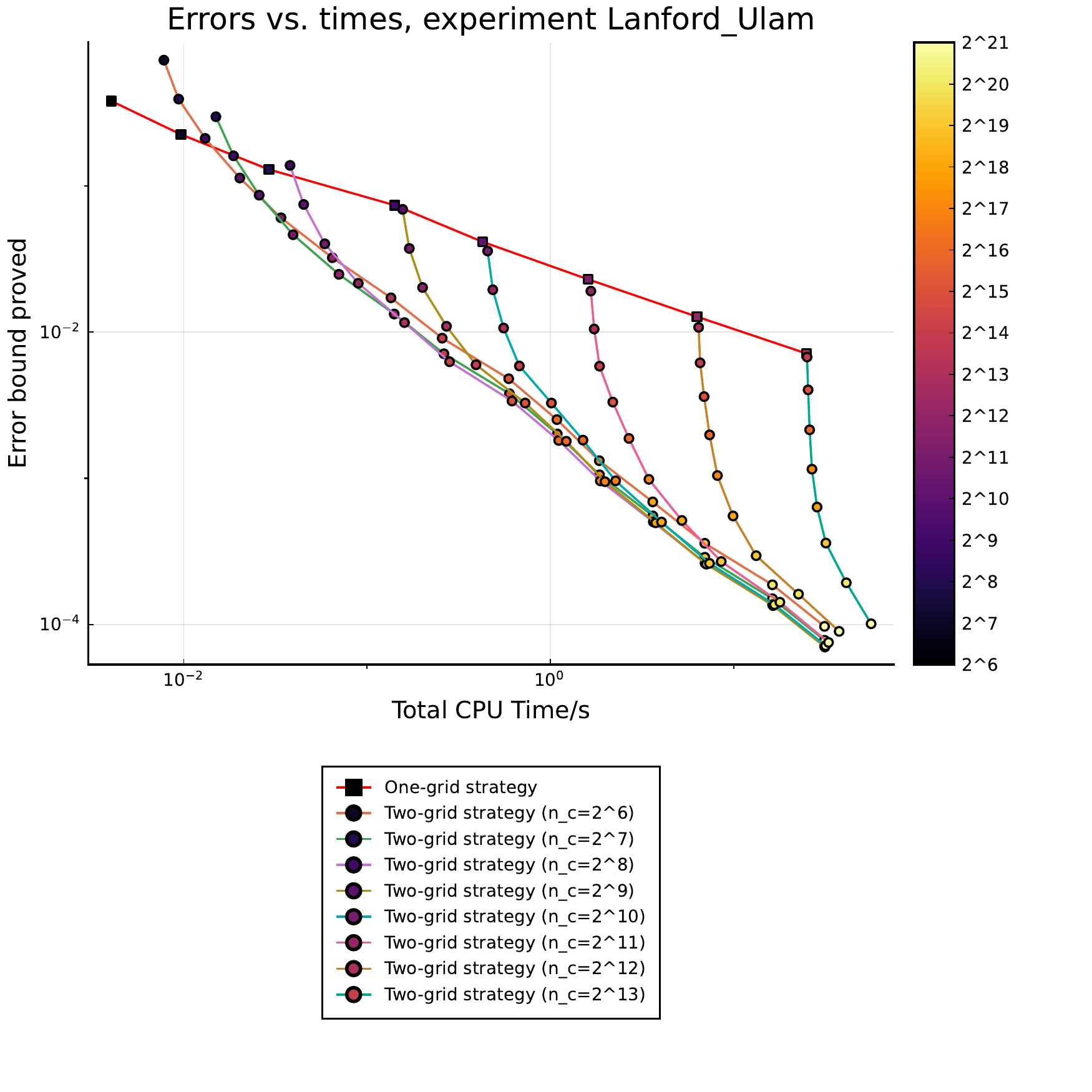}
\caption{Error bound vs.~time for various choices or $n$ and $n_F$, for the Lanford map~\eqref{lanford}. The marker color represents the value of $n$ or $n_F$.} \label{fig:Lanford-time}
\end{figure}
One can see from this plot that the error scales approximately as $t^{-1/2}$ with the one-grid strategy, and approximately as $t^{-1}$ with the two-grid strategy, as predicted by our complexity estimates. After an initial period to amortize the power norm computation, all sufficiently large choices of $n$ have similar asymptotic efficiency; this suggests that to improve the precision of an estimate it is better to keep $n$ constant and increase the value of $n_F$.

Using the tecnique above we were able to compute an enclosure for the Lyapunov exponent of the Lanford map
\[
\int \log(|T'|)\,df \in [0.657657, 0.657667]
\]
where the diameter of enclosure is $9.45\cdot 10^{-6}$.
This estimate was produced with $n_C=2^{11}$ and $n_F = 2^{25}$ in $1476$ seconds; most of this time was spent assembling the matrix $\mathbf{L}_{h_F}$.

\subsection{A non-linear non-Markov map}
We consider the following nonlinear modification of $\frac{17}{5}x \mod 1$:
\begin{equation} \label{nonlinearnonmarkov}
  T(x) = \begin{cases}
\frac{17}{5}x & 0\leq x \leq \frac{17}{5},\\
\frac{34}{25}(x-\frac{5}{17})^2 + 3(x-\frac{5}{17}), & \frac{5}{17} < x \leq \frac{10}{17},\\
\frac{34}{25}(x-\frac{10}{17})^2 + 3(x-\frac{10}{17}), & \frac{10}{17} < x \leq \frac{15}{17},\\
\frac{17}{5}(x-\frac{15}{17}) & \frac{15}{17} < x \leq 1,
\end{cases}  
\end{equation}
again with the Ulam projection. This is another of the dynamics considered in~\cite{GaNi}, this time without modification. We display the same information in Figures~\ref{fig:175_nonlinear-breakdown} and~\ref{fig:175_nonlinear-time}.
\begin{figure}
\centering
\includegraphics[width=\textwidth]{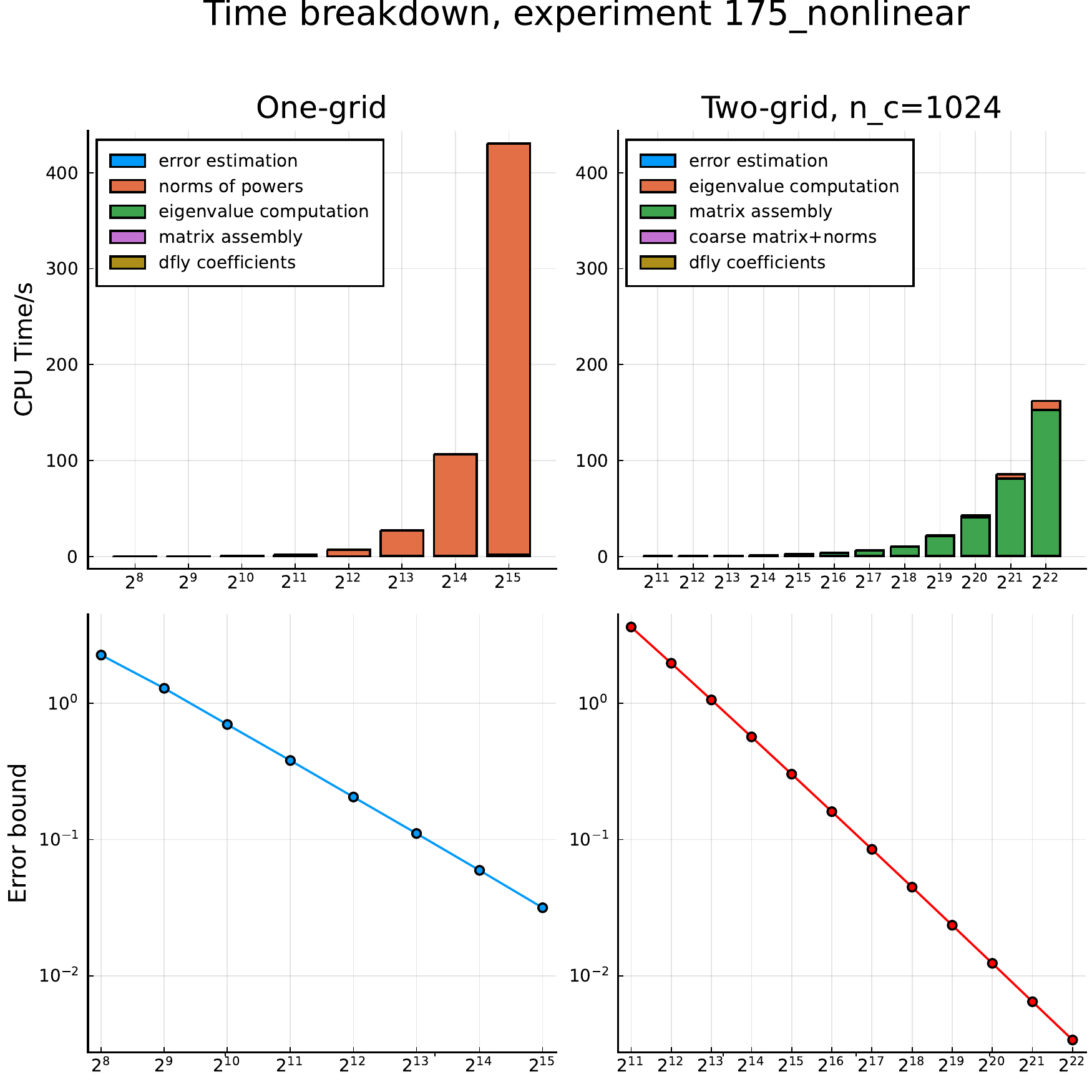}
\caption{Error bounds proved and CPU time breakdown for the non-Markov map~\eqref{nonlinearnonmarkov}.} \label{fig:175_nonlinear-breakdown}
\end{figure}
\begin{figure}
\centering
\includegraphics[width=\textwidth]{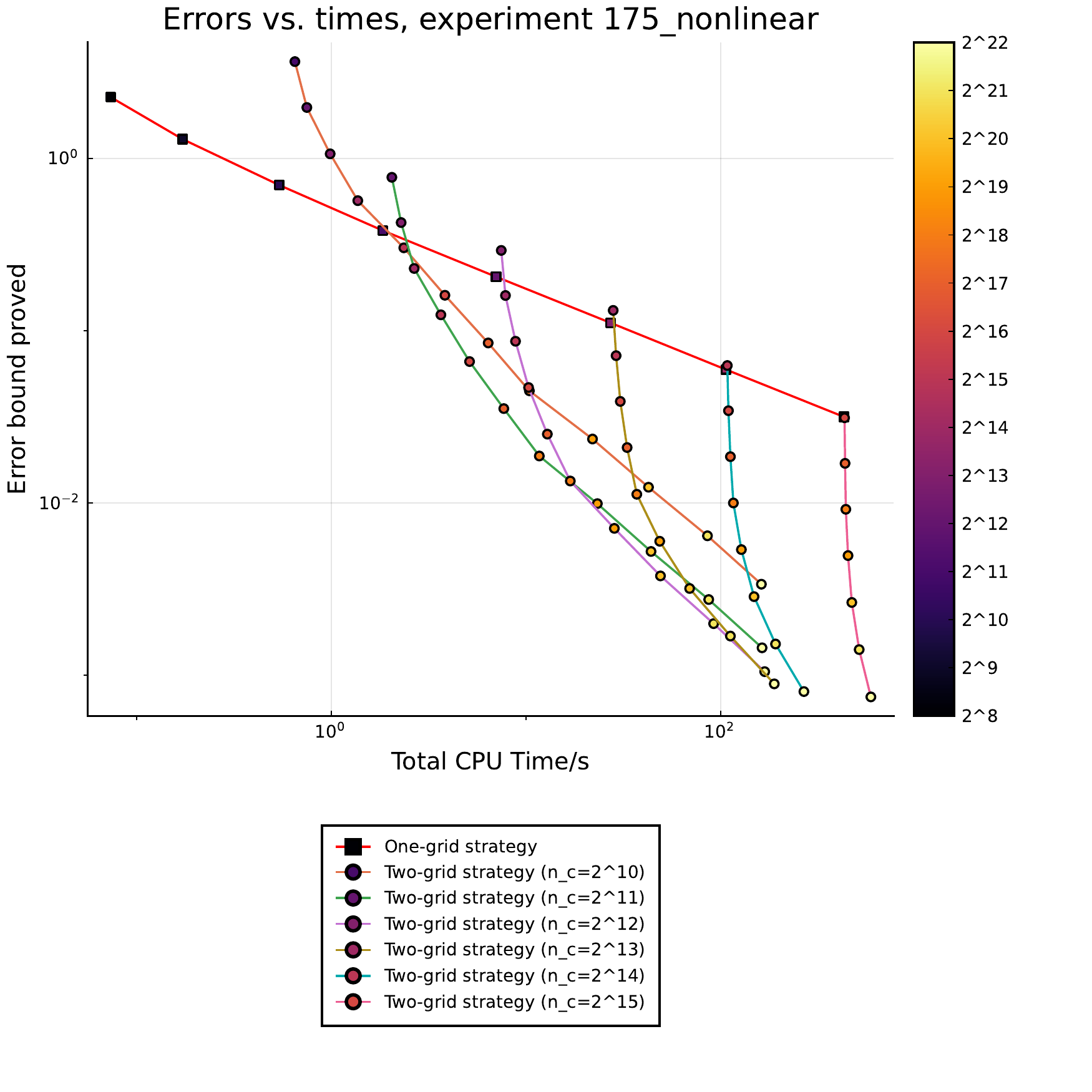}
\caption{Error bound vs.~time for various choices of $n$ and $n_F$ on the non-Markov map~\eqref{nonlinearnonmarkov}.} \label{fig:175_nonlinear-time}
\end{figure}
This experiment is more challenging, especially since $n_C = 2^{10}$ is required to reach a bound $C_{k_{\max}} < 1$ with the two-grid strategy, but the same features appear in the plots, highlighting in particular the massive improvements provided by the two-grid strategy.

Using the tecnique above we were able to compute an enclosure for the Lyapunov exponent of this
map
\[
\int \log(|T'|)\,df \in [1.21933, 1.22016]
\]
where the diameter of enclosure is $0.00082$. The computation time to obtain such an approximation was $2110$ seconds.

\subsection{A Markov perturbation of $4x \mod 1$}

The next example we consider is 
\begin{equation}
  T(x) = 4x + 0.01 \sin(8\pi x) \mod 1. \label{4x_perturbed}  
\end{equation}
In this experiment, we use the piecewise linear discretization to provide a bound to $\norm{u - \tilde{u}_h}_{L^\infty}$ in the $L^\infty$ norm, again replicating an example in~\cite{GaNi}. The results are reported in Figures~\ref{fig:4x_perturbed_Hat-breakdown} and~\ref{fig:4x_perturbed_Hat-time}.
\begin{figure}
\centering
\includegraphics[width=\textwidth]{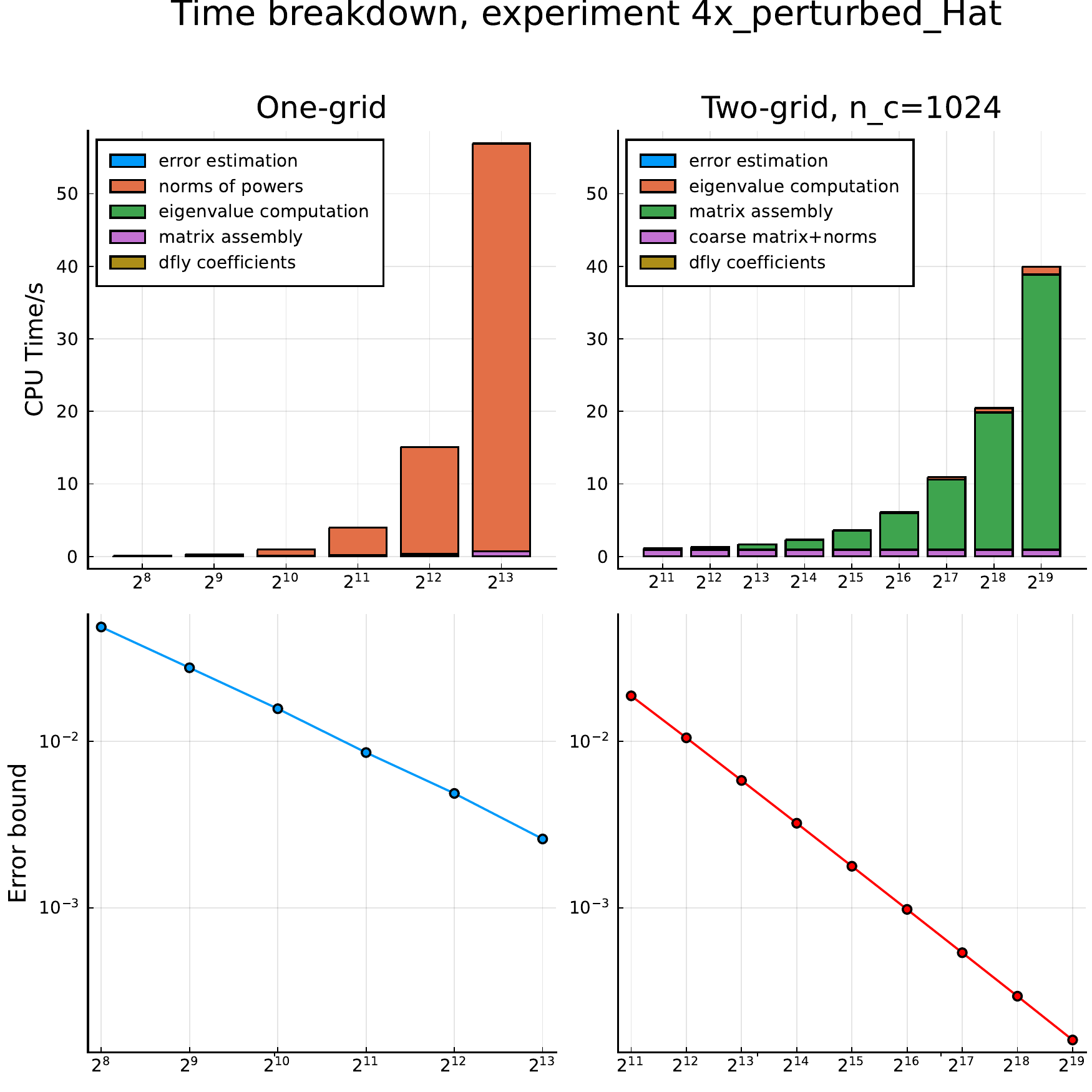}
\caption{Error bounds proved and CPU time breakdown for the ``$4x$ perturbed'' map~\eqref{4x_perturbed}.} \label{fig:4x_perturbed_Hat-breakdown}
\end{figure}
\begin{figure}
\centering
\includegraphics[width=\textwidth]{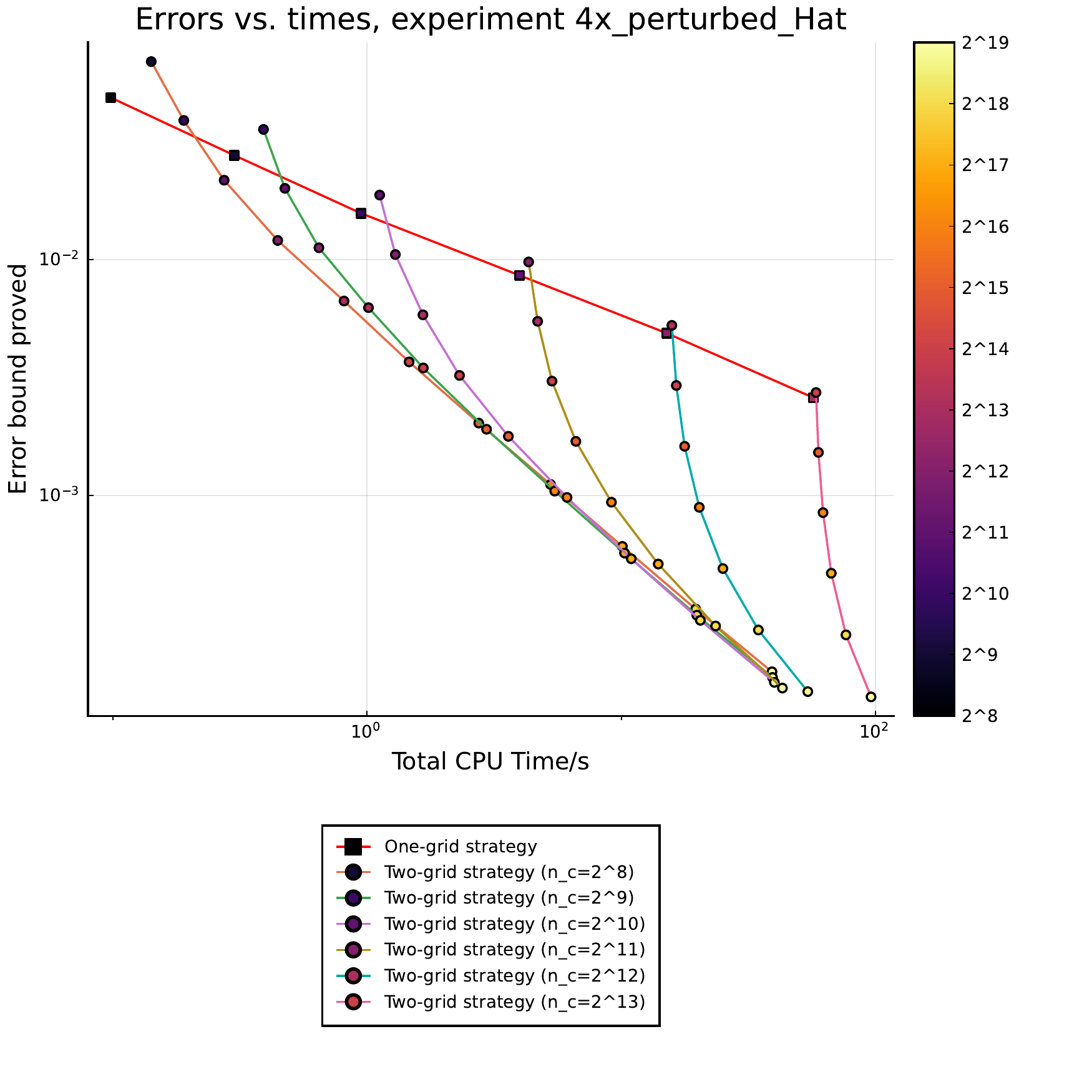}
\caption{Error bound vs.~time for various choices of $n$ and $n_F$ on the ``$4x$ perturbed'' map~\eqref{4x_perturbed}.} \label{fig:4x_perturbed_Hat-time}
\end{figure}

Despite the different projection, the workload and results are very similar. Note that assembling the matrix $\mathbf{L}$ is more expensive than in the other examples; this is not related to the different projection, but it is due to the fact that providing certified enclosures for trigonometric functions is computationally expensive.

Using the tecnique above we were able to compute an enclosure for the Lyapunov exponent of this map
\[
\int \log(|T'|)\,df \in [1.38530, 1.38531]    
\]
where the diameter of enclosure is $3.3\cdot 10^{-6}$.
The computation time to obtain such an approximation was $3016$ seconds.

\subsection{One-dimensional Lorenz map}

The Lorenz system is a famous example of a $3$-dimensional vector flows that, presents a strange attractor.
We refer to \cite{AP} for a historical introduction 
to the geometric model of the Lorenz system 
and a careful presentation of its construction; 
the example we present in this subsection is the one-dimensional map associated to the stable 
foliation of the geometric Lorenz system studied in \cite{GaNi2}.

This map is 
\begin{equation} \label{lorenzmap}
T(x) = \begin{cases}
\theta \left|x-\frac{1}{2}\right|^{\alpha} & 0\leq x < \frac{1}{2},\\
 1- \theta\left|x-\frac{1}{2}\right|^{\alpha} & \frac{1}{2}< x \leq 1,\\
\end{cases}
\end{equation}
with $\alpha=51/64$ and $\theta=109/64$.
Note that the derivative of this map goes to $\infty$ as we approach $1/2$, so the one-step Lasota-Yorke inequality which we have been using in the other examples does not hold; by direct computation, one sees that $T''/(T')^2$ behaves as 
$|x-1/2|^{-\alpha}$ near $1/2$, and hence it is unbounded.

\begin{lemma}
Let $T:[0,1]\to [0,1]$ and suppose there exists a finite partition
$\{P_k\}_{k=1}^b$ of $[0,1]$ such that
\begin{enumerate}
\item $T_k = T|_{P_k}$ is $C^2$,
\item $|T'(x)|>2$ for all $x\in [0,1]$.
\end{enumerate}     
Let $I_{l}=\{x\mid |T''/(T')^2|\geq l\}$ and suppose there exists an $l$ such that
\[
A = \frac{1}{2}\int_{I_l}\left|\frac{T''}{(T')^2}\right|dm+\frac{2}{\inf(|T'|)}<1,    
\]
then
\[
\Var{Lf}\leq A \Var(f)+\left(\max_k\frac{2}{|P_k|}+l\right)||f||_{L^1}. 
\]
\end{lemma}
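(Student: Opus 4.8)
The plan is to follow the classical route to a Lasota--Yorke inequality for the transfer operator, as in Lemma~\ref{thm:VarLY}, but to keep the distortion term in its \emph{unintegrated} form $\int \abs{f}\,\abs*{\frac{T''}{(T')^2}}\de m$ rather than bounding it crudely by $\norm{T''/(T')^2}_\infty\norm{f}_{L^1}$, which is impossible here since the distortion is unbounded. First I would decompose $\Var(Lf)\le \sum_k \Var\bigl(\mathbb 1_{T(P_k)}L_kf\bigr)$, where $L_k f(x)=f(\psi_k x)\,\abs{\psi_k'(x)}$ is the contribution of the branch $T_k=T|_{P_k}$ and $\psi_k=T_k^{-1}$. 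Since $\psi_k$ is monotone, $\Var_{T(P_k)}(L_kf)=\Var_{P_k}(f/\abs{T'})$, and the product rule for the variation of $f\cdot\Phi$ with $\Phi=1/\abs{T'}$ (so that $\abs{\Phi'}=\abs{T''/(T')^2}$) gives the \emph{refined local inequality}
\[
\Var_{T(P_k)}(L_kf)\le \frac{1}{\inf_{P_k}\abs{T'}}\Var_{P_k}(f)+\int_{P_k}\abs{f}\,\abs*{\frac{T''}{(T')^2}}\de m,
\]
together with two boundary contributions of the form $\abs{f(y)}/\abs{T'(y)}$, one at each endpoint $y$ of $P_k$, arising from extending $L_kf$ by zero outside $T(P_k)$.

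Next I would assemble the three families of terms. Summing the first term over $k$ yields at most $\tfrac{1}{\inf\abs{T'}}\Var(f)$. For the boundary terms I would pair the two endpoints of each $P_k$ and use that there is a point of $P_k$ where $\abs{f}$ lies below its mean, whence $\abs{f(a_k)}+\abs{f(a_{k+1})}\le \tfrac{2}{\abs{P_k}}\int_{P_k}\abs{f}\de m+\Var_{P_k}(f)$; because $\inf\abs{T'}>2$, the boundary family contributes at most $\tfrac{1}{\inf\abs{T'}}\Var(f)+\max_k\tfrac{2}{\abs{P_k}}\norm{f}_{L^1}$. Combined with the first family this produces exactly the $\tfrac{2}{\inf\abs{T'}}$ in $A$ and the $\max_k\tfrac{2}{\abs{P_k}}$ in $B$. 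For the distortion family I would split the domain at the threshold $l$: on $\{\abs{T''/(T')^2}<l\}$ one has $\int\abs{f}\,\abs*{\frac{T''}{(T')^2}}\de m\le l\norm{f}_{L^1}$, which accounts for the remaining summand $l$ of $B$.

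The heart of the matter — and the step I expect to be the main obstacle — is controlling the high-distortion part $\int_{I_l}\abs{f}\,\abs*{\frac{T''}{(T')^2}}\de m$ by $\tfrac12\bigl(\int_{I_l}\abs*{\frac{T''}{(T')^2}}\de m\bigr)\Var(f)$, i.e.\ obtaining the \emph{sharp} constant $\tfrac12$ on $\Var(f)$ without inflating the $\norm{f}_{L^1}$ coefficient past $l+\max_k\tfrac{2}{\abs{P_k}}$. A naive pointwise estimate $\abs{f(x)}\le\norm{f}_{L^1}+\Var(f)$ loses a factor $2$ and moreover leaks a spurious $\bigl(\int_{I_l}\abs{\cdot}\bigr)\norm{f}_{L^1}$ term into $B$. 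My plan to recover the correct constant is to fix a reference point $c$ with $\abs{f(c)}\le\norm{f}_{L^1}$, write $\abs{f(x)}\le\abs{f(c)}+\Var_{[c,x]}(f)$, and split $I_l$ across $c$, exploiting that $\Var_{[0,c]}(f)+\Var_{[c,1]}(f)=\Var(f)$ so that each side carries only part of the total variation. The factor $\tfrac12$ then emerges precisely when the $\abs{T''/(T')^2}$-mass of $I_l$ is balanced on the two sides of $c$, as is the case for the maps of interest, where $I_l$ is a (near-)symmetric neighbourhood of the critical points at which the distortion blows up; choosing $c$ as the median of $\abs*{\frac{T''}{(T')^2}}\,\mathbb 1_{I_l}\de m$ makes the two half-masses equal.

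The delicate work, which I would carry out carefully rather than sketch, is twofold: verifying that this even-splitting is legitimate for the intended class of maps, and showing that the residual contribution involving $\abs{f(c)}$ is absorbed into the $\norm{f}_{L^1}$ coefficient already present, so that no extra $\norm{f}_{L^1}$ term survives. Once the three families are collected, the bound $\Var(Lf)\le A\,\Var(f)+B\,\norm{f}_{L^1}$ follows with $A=\tfrac12\int_{I_l}\abs*{\frac{T''}{(T')^2}}\de m+\tfrac{2}{\inf\abs{T'}}$ and $B=\max_k\tfrac{2}{\abs{P_k}}+l$, as claimed.
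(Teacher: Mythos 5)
Your overall architecture --- the branch decomposition, the refined local variation inequality that keeps the distortion in the unintegrated form $\int_{P_k}\abs{f}\,\abs{T''/(T')^2}\,dm$, the boundary estimate $\abs{f(a_k)}+\abs{f(b_k)}\le \frac{2}{\abs{P_k}}\int_{P_k}\abs{f}\,dm+\Var_{P_k}(f)$, and the split of the distortion integral at the threshold $l$ --- is the right one (the paper states this lemma without proof, so there is nothing to compare against line by line). The gap sits exactly in the step you yourself flag as the heart of the matter, and the mechanism you propose there does not work. You ask for a single reference point $c$ that simultaneously satisfies $\abs{f(c)}\le\norm{f}_{L^1}$ and bisects the mass of $\abs{T''/(T')^2}\,\chi_{I_l}\,dm$. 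The median of that measure is determined by $T$ alone, independently of $f$, so $\abs{f(c)}$ admits no better bound there than $\norm{f}_{L^1}+\Var(f)$; conversely, at a sub-average point of $\abs{f}$ the distortion mass need not split evenly. Either way the residual term $\abs{f(c)}\int_{I_l}\abs{T''/(T')^2}\,dm$ survives and ruins either the factor $\tfrac12$ in $A$ or the claimed $B$.

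The repair is to stay branch-local and reuse one reference point twice. On each $P_k$ pick $c_k$ with $\abs{f(c_k)}\le\frac{1}{\abs{P_k}}\int_{P_k}\abs{f}\,dm$ and use it both for the boundary terms, via $\abs{f(a_k)}+\abs{f(b_k)}\le 2\abs{f(c_k)}+\Var_{P_k}(f)$, and for the high-distortion part, via $\sup_{P_k}\abs{f}\le\abs{f(c_k)}+\Var_{P_k}(f)$, which gives $\int_{P_k\cap I_l}\abs{f}\,\abs{T''/(T')^2}\,dm\le\bigl(\abs{f(c_k)}+\Var_{P_k}(f)\bigr)\int_{P_k\cap I_l}\abs{T''/(T')^2}\,dm$. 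The total coefficient multiplying $\abs{f(c_k)}$ is then $\frac{2}{\inf\abs{T'}}+\int_{P_k\cap I_l}\abs{T''/(T')^2}\,dm\le 2A<2$ by the standing hypothesis $A<1$, so after summing over $k$ these contributions are absorbed into $\max_k\frac{2}{\abs{P_k}}\norm{f}_{L^1}$ with nothing left over: the absorption you were hoping for comes from the hypothesis $A<1$, not from a clever choice of $c$. The coefficient of $\Var(f)$ produced by this argument is $\frac{2}{\inf\abs{T'}}+\max_k\int_{P_k\cap I_l}\abs{T''/(T')^2}\,dm$, which equals the stated $A$ precisely when no single branch carries more than half of the $I_l$-mass of the distortion. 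For the Lorenz map this holds because $I_l$ is a symmetric neighbourhood of the branch point $\tfrac12$, so your intuition that the $\tfrac12$ reflects an even split across the singularity is correct --- but the split is effected by the partition $\{P_k\}$, not by a point chosen inside $I_l$, and for a general map satisfying only the stated hypotheses the honest constant is $\max_k\int_{P_k\cap I_l}\abs{T''/(T')^2}\,dm$.
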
 

To prove a one-step Lasota-Yorke inequality for our example, we applied this lemma to the third iterate of the map $T$.
The coefficients in the obtained inequality are large ($A \approx 0.922, B\approx 48.43$) and quite expensive to compute (about one minute). 

The results obtained are presented in Figures~\ref{fig:lorenz-breakdown} and~\eqref{fig:lorenz-time}.
\begin{figure}
\centering
\includegraphics[width=\textwidth]{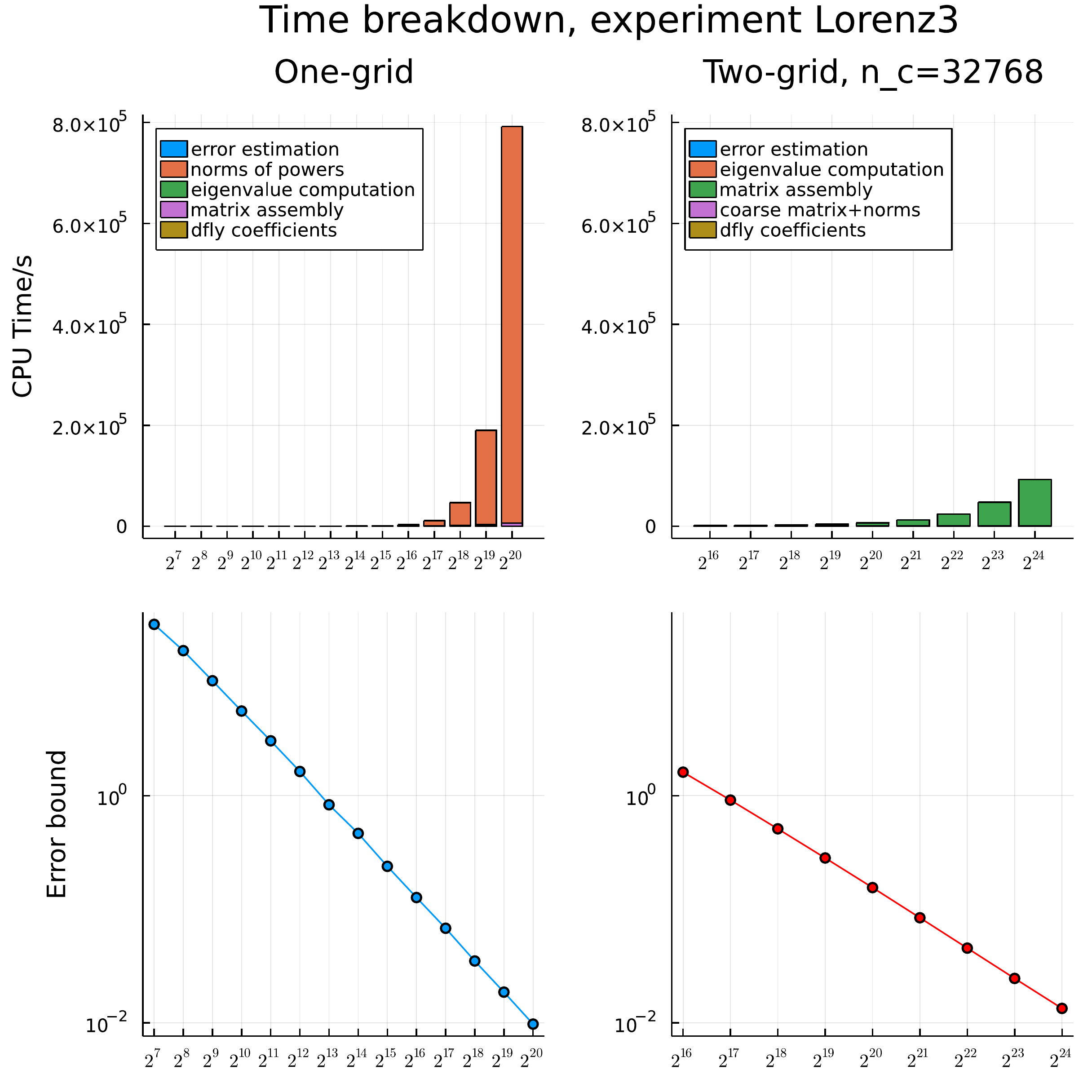}
\caption{Error bounds proved and CPU time breakdown for the third iterate of the Lorenz map~\eqref{lorenzmap}.} \label{fig:lorenz-breakdown}
\end{figure}
\begin{figure}
\centering
\includegraphics[width=\textwidth]{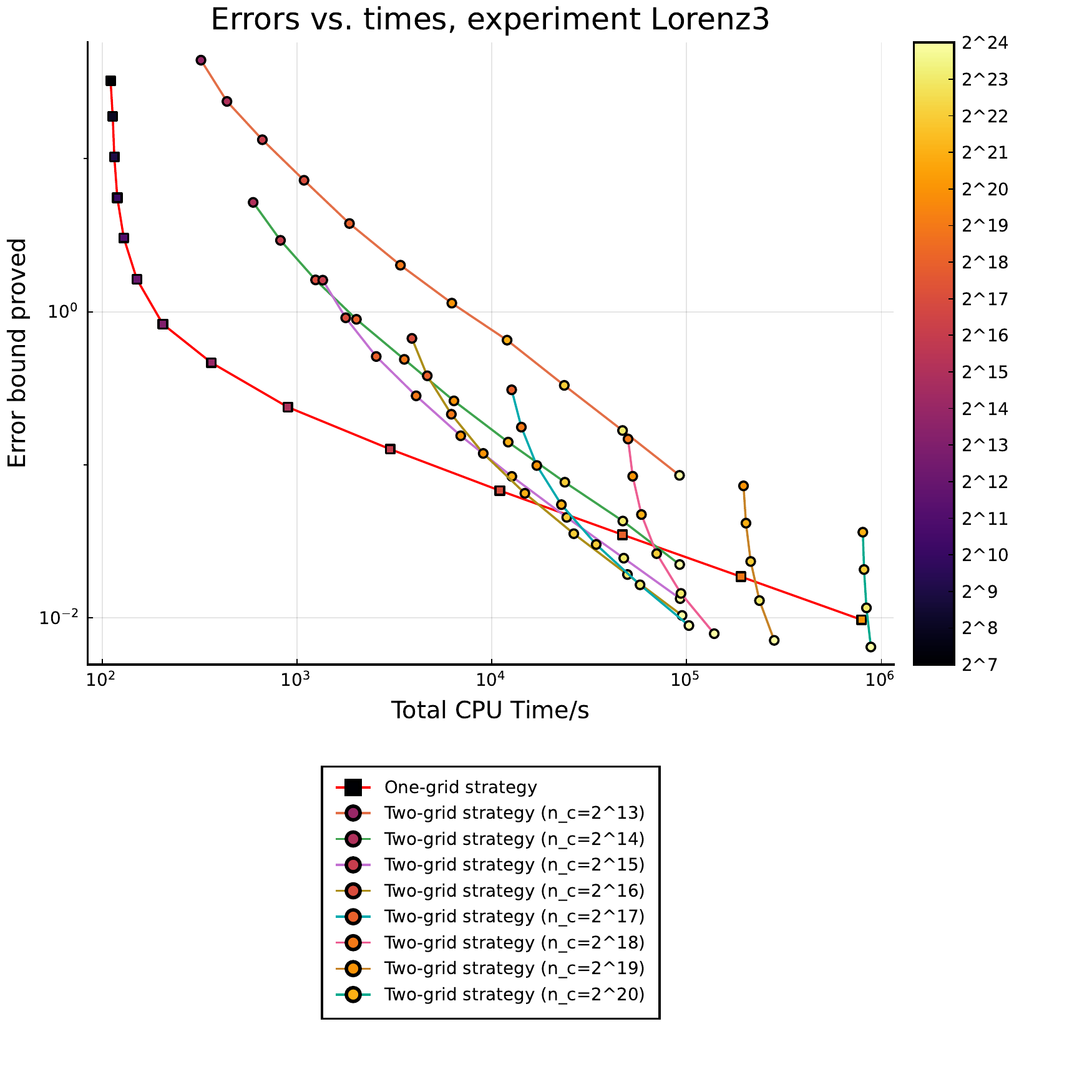}
\caption{Error bound vs.~time for various choices of $n$ and $n_F$ on the third iterate of the Lorenz map~\eqref{lorenzmap}.} \label{fig:lorenz-time}
\end{figure}
One can see that the error bounds are generally worse than those obtained with the previous maps; in particular, we need to use a larger value of the coarse discretization size $n$. Indeed, the two-grid strategy fails to produce useful bounds when used with $n=1024$: due to the large value of $B$, with this choice of $h$ the formula~\eqref{finebounds} produces only bounds for $Q_{h_F}^m|\mathcal{U}^0_{h_F}$ that are larger than 1, hence the convergence of the series appearing in~\eqref{theorem1_estimate} cannot be proved with Lemma~\ref{lem:onembound} and the method fails. Nevertheless, larger values of $n$ and $n_F$ yields valid bounds for the error, as shown in Figure~\ref{fig:lorenz-time}; the two-grid strategy eventually surpasses the efficiency of the one-grid bounds, and for instance it is faster by an order of magnitude when one seeks to prove an error bound of $10^{-2}$.

Using the tecnique above with $n_C=2^{17}$, $n_F = 2^{24}$, we can compute an enclosure for the Lyapunov exponent of this map
\[
\int \log(|T'|)\,df \in [0.580676, 0.786467]
\]
where the diameter of enclosure is $0.2058$. The computation time to obtain such an approximation was 101828 seconds.

\subsection{Limitations of machine arithmetic}
In several computations involved in our algorithm, 
floating point arithmetic gives a lower bound on the attainable precision:
\begin{itemize}
    \item the diameter of the interval entries of the interval matrix representing the discretized operator 
    is generically bounded below by machine precision,
    \item machine floating point arithmetic is going to be the main source of the error stemming from the computation of the residual
    $||Pu_h-u_h||$.
\end{itemize} 
A possible strategy to overcome machine arithmetic limitations could be to first compute 
a coarse approximation in machine arithmetic, allowing us to estimate mixing rates $C_k$,
and then compute a finer approximation in higher precision floating point arithmetic, i.e.,
a ``low-precision coarse  -- high-precision fine '' scheme.

While this corresponds to a small modification of the code, no experiments have been done in this direction.

Another, much more serious problem arising from machine precision is 
the numerical error arising in our norm estimates. 
If the discretized operator is not sparse, it may be impossible 
to prove that one of its iterates contracts $\mathcal{U}_0$,
due to the estimates we need to put in place to guarantee an upper bound 
of the norm, see subsection \ref{subsec:boundmachine}.
This can also be solved by using higher precision floating point numbers,
but the computational overhead would be difficult to manage.

\section{Final remarks and considerations}\label{sec:10}
In this paper we introduced a general framework for the approximation of invariant measures. We gave a finite set of inequalities that, once proved, give rise to an algorithm for the approximation, once we can prove computationally the existence of an $m$ such
that $\norm{Q_h^m|_{\mathcal{U}_0}} \leq C_m<1$. 

On the computational side, the major contribution of this paper is the new ``coarse-fine'' framework based on two discretizations with grids of different sizes; this framework greatly reduces the computational burden of the estimation algorithms introduced in~\cite{GaNi}. The experiments in~\cite{GaNi} relied on computational norm estimation with Algorithm~\ref{algo:powernorms}, which requires $O(mn^2)$ floating point operations to obtain estimates $C_k$ for $k\leq m$. Typically, $n\approx 10^5$ to $10^6$ is needed to get a meaningful estimate, so this computation was doable, but extremely slow. Here, we give a strategy to combine bounds from various sources in Section~\ref{sec:aggregating}, including in particular those coming from the coarse-fine strategy~\eqref{finebounds}. This improvement gives a major reduction in the computational time: while the results in~\cite{GaNi} were obtained on a supercomputing cluster, we can replicate them in a few minutes on a common laptop computer.

\bibliographystyle{plain}
\bibliography{references}

%

\end{document}